\theoremstyle{plain}
\newtheorem{thm}{Theorem}[section]
\newtheorem{cor}[thm]{Corollary}
\newtheorem{lemma}[thm]{Lemma}
\newtheorem{prop}[thm]{Proposition}
\theoremstyle{definition}
\theoremstyle{definition}
\newtheorem{ex}[thm]{Example}
\theoremstyle{remark}
\newtheorem{rem}[thm]{Remark}
\numberwithin{equation}{section}
\newcommand{\re}{\mathbb{R}}
\newcommand{\nat}{\mathbb{N}}
\newcommand{\rw}{\rightarrow}
\newcommand{\lin}{{L^{\infty}(\mu)}}
\newcommand{\lpv}{{L^{p(\cdot)}(\mu)}}
\newcommand{\lqv}{{L^{q(\cdot)}(\mu)}}
\newcommand{\lpvo}{{L^{p(\cdot)}[0,1]}}
\newcommand{\lqvo}{{L^{q(\cdot)}[0,1]}}
\begin{document}
\setlength{\baselineskip}{6mm}
\title[Disjointly strictly singular inclusions between variable Lebesgue spaces]{Disjointly strictly singular inclusions between variable Lebesgue spaces}

\author[Hern\'andez, Ruiz and Sanchiz]{ Francisco L. Hern\'andez$^*$,\ C{\'e}sar Ruiz$^*$ and Mauro Sanchiz$^{**}$}
\address{IMI and
 Departamento de An{\'a}lisis Matem{\'a}tico y Matem\'atica Aplicada, Facultad de
Matem{\'a}ticas, Universidad Complutense, 28040 Madrid, Spain}
\email{
(F.L. Hern\'andez)  pacoh@ucm.es}
\email{
(C. Ruiz)  cruizb@mat.ucm.es}
  \email{
  (M. Sanchiz) mauro.sanchizalonso@ceu.es}
\thanks{$^*$ Partially supported  by grant PID2019-107701G-I00}
\thanks{$^{**}$ Partially supported by grant PID2019-107701G-I00 and scholarship CT42/18-CT43/18}
\subjclass[2000]{46E30, 47B60}

\begin{abstract} Disjointly strictly  singular inclusions between variable Lebesgue spaces $\lpv$ on finite measure are characterized. Suitable criteria in terms of the (bounded or unbounded) exponents are given. It is proved the equivalence of  $L$-weak compactness (also called almost compactness) and disjoint strict singularity  for variable Lebesgue space inclusions. For infinite measure any inclusion \,$\lpv \hookrightarrow \lqv$\,  is not disjointly strictly singular. No restrictions on the exponent are imposed.

\end{abstract}

\maketitle 

\section{Introduction}  A linear operator $T$ between two Banach spaces $E$ and $F$ is \textit{strictly singular} (or Kato) if $T$ fails to be an isomorphism on any infinite-dimensional
(closed) subspace of  $E$, i.e. given $\epsilon >0$ and an infinite dimensional subspace $E_{0}$ of  $E$ there exists an unitary  vector $x\in E_{0} $ such that  $\|Tx\| \leq \epsilon$. In the context of Banach lattices $E$ a useful weaker notion is that  an operator $T$ from  $E$ to  $F$  is said to be \textit{disjointly strictly singular} (DSS in short) if there is no disjoint sequence of non-null vectors $(x_{n})$ in $E$ such that the restriction of $T$ to the (closed) subspace \, $[x_{n}]$\, spanned by $(x_{n})$ is an isomorphism.


The study of  strictly and disjointly strictly  singular inclusions have  been quite extensive for symmetric (or rearrangement invariant) function spaces. Recall that for symmetric function spaces \,$E(\mu)$\, on finite measures  the  left canonic inclusions of \,$L^{\infty}(\mu)$  in \,$E(\mu)$\, is always  strictly singular, while the right inclusion  of \,$ E(\mu)$  in \,$L^{1}(\mu)$\,  is disjointly strictly singular. And  this inclusion \,$ E(\mu)\hookrightarrow L^{1}(\mu)$\, is strictly singular if and only if the Orlicz space  \,$L^{\exp x^{2}}_{0}$\, cannot be   included in \,$E(\mu)$. When considering two symmetric function spaces with \,$E(\mu)  \hookrightarrow F(\mu)$\,   this  inclusion $i$
is strictly singular if and only if \,$i$\, is disjointly strictly singular   and the norms of  \,$E(\mu)$\, and \,$F(\mu)$\, are not equivalent on \,$[r_{n}]_{E(\mu)}$\, and $[r_{n}]_{F(\mu)}$, the subspaces spanned by the Rademacher functions $(r_{n})$ (\cite{A-H-S,F-H-K-T,H-N-S}). This strengthens the interest in knowing characterizations of disjointly strictly singular inclusions  for distinguished  classes of function spaces  (see \cite{A-S,G-H-S-S,H-R-S-12} and references within).


One of the  goals of this paper is to study the  disjoint strict singularity  of inclusion operators between   variable  Lebesgue  spaces (or Nakano spaces)\, $\lpv$\,\, for finite and infinite measures.
These non-symmetric classical function spaces  \,$\lpv$\, have seen a strong renewed relevance  in the last decades due to their applications (cf.  \cite{Libro,Libro2}).

 In this context of variable spaces the inclusion behavior is more diverse than in the symmetric case. Compact and weakly compact inclusions have been  considered in \cite{G-M,F-G-N-R,E-G-N,H-R-S-21}.  The study of $L$-weak compactness  of variable space inclusions  is motivated by its applications to the compactness of associated  Sobolev  embeddings (see \cite{E-G-N,F-G-N-R}). Recall that an operator $T$ between two Banach function lattices $E$ and $F$ on a measure space $(\Omega,\mu)$  is said to be \emph{$L$-weakly compact} (or \emph{almost compact}) whenever $T(B_E)$ is a equi-integrable subset in \,$F$\, for $B_{E}$ \,denoting the unit ball of $E$, i.e.
 $$ \lim_{n\rw \infty} \sup_{f\in B_E} \{|| Tf\chi_{A_n}||_F\} =0,
 $$ for every sequence \,$(A_n)$\,  of measurable sets in $\Omega$ with  \,$\chi_{A_n}\rw 0$ $\mu$-a.e.. In \cite{E-G-N} (Thm. 3.4)
     Edmunds, Gogathisvili  and Nekvinda have given the following  $L$-weak compactness criterion for bounded exponents defined on bounded open subsets \,$\Omega$\, of \,$\re^n$ with Lebesgue measure $\vert \cdot\vert$: an inclusion $L^{p(\cdot)}(\Omega) \hookrightarrow L^{q(\cdot)}(\Omega) $\,
   is  $L$-weakly compact  if and only if for every \,$a>1$,
    $$ \int_{0}^{|\Omega|} a^{\left(\frac{1}{p-q}\right)^*(x)} dx < \infty,
    $$
    where $(\frac{1}{p-q})^*(x)$ denotes the decreasing rearrangement of $(\frac{1}{p-q})(t)$. Another  $L$-weak compactness criterion in $\lpv$ (of De la Valle\'e-Pousin type) has been given  in \cite{H-R-S-21} (Prop. 3.3).

The study of disjointly strictly singular inclusions \,$\lpv \hookrightarrow \lqv$\,  was initiated in  \cite{F-H-R-S}. In the present  paper we continue this  research line obtaining now complete characterizations of  disjointly strictly singular inclusions  and  $L$-weakly compact inclusions  \,$\lpv \hookrightarrow \lqv $, giving suitable conditions on the exponents. It comes out the equivalence  of these two  concepts in this setting of variable Lebesgue  space inclusions (a fact rather unexpected according with the Orlicz space behavior, see Section 2). The strict singularity of inclusions  \,$L^{\infty}(\mu) \hookrightarrow \lpv$\, is also studied giving suitable criteria for it.


The paper is divided in 6 sections. Section 2 recall some definitions and basic results. Section 3  contains some useful preliminary results on $\lpv$ spaces and decreasing rearrangement functions. Thus Proposition \ref{prop-33}  states, by an analysis of disjoint function sequences \,$(\frac{\chi_{E_n}}{\mu(E_n)^{\frac{1}{p(t)}}})$,  that  if an inclusion \,$ \lpv \hookrightarrow \lqv $\,  is disjointly strictly singular,  then
 $$ \lim_{x\rw \mu(\Omega)^{-}} \,(\mu(\Omega)-x)^{(\frac{p-q}{p\,q})^*(x)} =   0. $$

In Section 4, disjointly strictly singular inclusions $\lpv  \hookrightarrow \lqv$  for    \textit{finite}  measures are studied, looking for suitable criteria  on the exponents. First we do under the  hypothesis  of  the exponent \,$q(\cdot)$\, be bounded (Theorem \ref{Teor-2}). After that we consider the general case, thus Theorem \ref{Teor-2b}  claims the equivalence of the following statements for exponents \, $q(\cdot) < p(\cdot) $ $\mu$-a.e. on a finite measure space:

\begin{enumerate}
  \item The inclusion \,$ \lpv \hookrightarrow \lqv$\,  is $L$-weakly compact.
  \item The inclusion $ \lpv \hookrightarrow \lqv$ \,is disjointly strictly singular.
\vspace{1mm}
  \item $\lim_{x\rw \mu(\Omega)^{-}} \, \,(\mu(\Omega)-x)^{(\frac{p-q}{p\,q})^*(x)}= 0$.
\vspace{1mm}
  \item $\int_0^{\mu(\Omega)} a^{(\frac{p\,q}{p-q})^*(x)}dx < \infty$ \,\,\, for every $a>1$.
\end{enumerate}


Thus the above $L$-weak compactness inclusion criteria for bounded exponents in \cite{E-G-N} is extended to the general case. The useful limit condition (3) has not been considered earlier.
In particular a new weak compactness criterion for inclusions  $ \lpv \hookrightarrow L^{1}(\mu)$ is given (Corollary \ref{Cor3}).
The  strict singularity of inclusions \,$L^{\infty}(\mu) \hookrightarrow \lpv$\, is also studied obtaining the following criterion   $$\lim_{x\rw \mu(\Omega)^-} (\mu(\Omega) -x) ^{(\frac{1}{p})^*(x)} =\, 0,$$ which is   equivalent to \,$\int_0^{\mu(\Omega)} a^{p^*(x)} dx < \infty$\, for every $a>1$.   In other words, the exponent \,$p(\cdot)$\,  must belong to the Orlicz space \,$ L_{0}^{exp \,x}(\mu)$ (Theorem \ref{Teor-4}).
  Several  illustrative examples are included at the end of this section (Examples at \ref{examples}).

   In Section 5, the special exponent class of  log-Holder continuous functions is considered,  giving a simpler disjoint strict singularity  criterion, namely
   \, $$ess\inf (p(\cdot)-q(\cdot)) > 0\,.$$

   Finally,   Section 6 is devoted to the \textit{infinite} measure case. Inclusions \, $\lpv \hookrightarrow \lqv$ \,   for  infinite measures \,forces  that the exponents have  a very close asymptotic behavior. This allows  to find suitable  subspaces generated by disjoint functions with equivalence of norms. Thus for infinite measures all the  inclusions   \,$ \lpv \hookrightarrow  \lqv $\,  are  no  disjointly strictly singular (Theorem \ref{DSS-Inf3}).


\section{Preliminaries}

We recall here some basic definitions and fix the notation used in the following sections.


 An  operator \, $T : E \rw Y$ between a Banach  lattice $E$ and a Banach space  $F$ is \textit{disjointly strictly singular}\,(DSS in short)  if the restriction $T|_{[f_{n}]}$ \,is not an isomorphism for any (closed)  subspace $[f_n]$ \,spanned by a normalized pairwise disjoint sequence $(f_{n}) $ in  $E$.
 This DSS notion is useful in comparing the lattice structure of Banach lattices and studying strictly singular operators between Banach lattices  (cf. \cite{H-S,F-H-K-T}). Recall that an operator $T$ between two Banach spaces $E$ and $F$ is \textit{strictly singular }(or Kato) if there is no infinite-dimensional subspace $E_{1}$ of $E$ such that the restriction $T_{|E_{1}}$ is an isomorphism. Obviously  every strictly singular operator is DSS but  the converse is not true. (f.i.  the inclusions \,$L^{p}[0,1] \hookrightarrow L^{q}[0,1] $, $q < p<\infty$).

An operator  \,$T : E \rw F$ between a Banach function lattice  \,$E$ and a Banach space $F$ is said to be \emph{$M$-weakly compact }  whenever \,$\lim_{n\rw\infty} \,||T(f_n)||_F = 0$,  where $(f_n)$ is any norm bounded disjoint sequence in \,$E$.
 It is clear that every  $M$-weakly compact operator is a DSS  operator.
An  operator \,$T : E \rw F$ between two Banach function lattices  $E$ and $F$ on a measure space $(\Omega,\mu)$  is \emph{$L$-weakly compact (or almost compact or strict)}   whenever $T(B_E)$ is a  equi-integrable subset in \,$F$\, for $B_{E}$ \,the unit ball of $E$ \,i.e.
 $$ \lim_{n\rw \infty} \sup_{f\in B_E} \{||T(f)\chi_{A_n}||_F\} =0\,
 $$ for every sequence \,$(A_n)$\,  of measurable sets in $\Omega$ such that \,$\chi_{A_n}\rw 0$ $\mu$-a.e. (cf. \cite{A-B,B-S}).

Let $(\Omega, \mu)$ be a  measure space. Given an \emph{exponent} function \,$p(\cdot)$\, on $\Omega$ (i.e. a real measurable function $p$ on $ \Omega$ with $1\leq p(t) < \infty $) the   \emph{variable   Lebesgue  space (or Nakano space)} \, $\lpv$\, is  the space of all  real  measurable function classes  $f$ on $\Omega$\, such that the modular \,$\rho_{p(\cdot)} (f/r) < \infty$ for some  \,$r >0$, where
  $$\rho_{p(\cdot)}(f) = \int_{\Omega} | f(t)|^{p(t)} d\mu. $$

The associated Luxemburg norm is defined by
$$ ||f||_{p(\cdot)} = \inf \{r>0 : \rho_{p(\cdot)} \left(\frac{f}{r}\right) \leq 1\}.
$$

We denote  \, $p^-:=\text{ess}\inf\{p(t):t\in\Omega\}$\, and $p^+:=\text{ess}\sup\{p(t):t\in\Omega\}$.
Equally, $p_{\vert A}^+$ and $p_{\vert A}^-$ denote the essential supremum and infimum of \,$p(\cdot)$\, over a measurable subset $A$ of $\Omega$.
When $\Omega = \nat$ with the counting measure and $(p_{n})$ is a  real sequence with $1\leq p_{n}< \infty$, we get the {\em  Nakano sequence space} $\ell_{(p_n)}$ i.e.  the Banach space
$$\ell_{(p_n)}=\left\{(x_n)\in\mathbb R^{\mathbb N}:\rho((x_n))=\sum_{n=1}^\infty\left|\frac{x_n}r\right|^{p_n}\,<\infty\mbox{ for some }r>0\right\}$$
equipped with the corresponding Luxemburg norm.


The \textit{conjugate exponent} function \,$p'(\cdot)$\, of $p(\cdot)$ is defined by the equation $\frac{1}{p(t)}+\frac{1}{p'(t)}= 1$ almost everywhere $t\in \Omega$.
When $p^+<\infty$, the topological dual of the space $\lpv$\,   is the variable Lebesgue  space $L^{p'(\cdot)}(\Omega)$. An space $\lpv$ is separable  if and only if the measure space $(\Omega, \mu)$ is separable and $p^+ < \infty$. Moreover, $\lpv$ is reflexive if and only if \,$1< p^- \leq p^+ <\infty$.

Recall that the \textit{associated} space \,$(\lpv )'$\, is the space of all scalar measurable functions $g$  on $\Omega$ such that  $ \int_{\Omega} \,f g \,d\mu < \infty$\, for every \,$f\in\lpv$. If  $ 1<p(\cdot) < \infty$ a.e. then $(L^{p(\cdot)}(\mu))'= L^{p'(\cdot)}(\mu)$ \, (cf. \cite{Libro2,M-W}). 


A sequence $(f_{n}) \subset  \lpv$ verifies that  $||f_n||_{p(\cdot)}\rw 0$ if and only if  $ \rho_{p(\cdot)}( \lambda f_n) \rw 0$ for every $\lambda > 0$.  If  $p^+< \infty$, then $\rho_{p(\cdot)}( f_n) \rw 0$  if and only if $||f_n||_{p(\cdot)}\rw 0$. Furthermore,  \,$\rho_{p(\cdot)}( f) \leq1$ if and only if \, $||f||_{p(\cdot)}\leq1$. Also, if $||f||_{p(\cdot)}> 1$, then $1\leq ||f||_{p(\cdot)}\leq \rho_{p(\cdot)}(f)$ (\cite{Libro} p.75, \cite{Libro2}).
 The \textit{H\"older inequality } (\cite{Libro2} Thm 2.26, \cite{Libro} Lemma 3.2.20) states that there exists a constant \,  $1< K \leq 4$\, such that for every two measurable functions  \,$f,g : \Omega \rw \re$, it holds
        $$ \int_{\Omega} |f(t)g(t)| d\mu \leq \,K \,||f||_{p(\cdot)}\, ||g||_{p'(\cdot)}.
    $$

A criterion for the inclusion \,$\lpv \hookrightarrow \lqv $\, to hold is the following:

\begin{prop}\label{inclusion} (\cite{Libro2} Thm 2.45, \cite{Libro} Thm 3.3.1)
Let $(\Omega,\mu)$ be an atomless infinite measure space and exponents $p(\cdot)$ and $q(\cdot)$. The inclusion \,$\lpv \hookrightarrow \lqv$\, holds if and only if \,$q(\cdot)\leq p(\cdot)$\, $\mu$-a.e. and there exists $\lambda>1$ such that
$$
\int_{\Omega_d} \lambda^{-\left(\frac{p\, q}{p-q}\right)(t)}\,  d\mu <\infty,
$$
where \,$\Omega_d=\{t\in\Omega : p(t)>q(t)\}$.
\end{prop}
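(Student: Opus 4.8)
The plan is to prove both implications directly from the modular. Throughout write $s(\cdot):=\frac{p(\cdot)q(\cdot)}{p(\cdot)-q(\cdot)}$ on $\Omega_d$, so that $\frac{1}{q(t)}=\frac{1}{p(t)}+\frac{1}{s(t)}$ there and the exponents $\frac{p}{q}$ and $\frac{s}{q}$ are conjugate at each $t\in\Omega_d$ (note $q\le s$ automatically).

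For sufficiency, assume $q\le p$ $\mu$-a.e. and $\int_{\Omega_d}\lambda^{-s}\,d\mu<\infty$ for some $\lambda>1$. First I would apply Young's inequality pointwise on $\Omega_d$ with the conjugate pair $(\frac{p}{q},\frac{s}{q})$ to the product $|f(t)|^{q}\cdot\lambda^{-q}$, obtaining $(|f|/\lambda)^{q}\le\frac{q}{p}|f|^{p}+\frac{q}{s}\lambda^{-s}\le|f|^{p}+\lambda^{-s}$, while on $\{p=q\}$ one has trivially $(|f|/\lambda)^{q}=\lambda^{-p}|f|^{p}\le|f|^{p}$. Integrating, any $f$ with $\|f\|_{p(\cdot)}\le1$ (equivalently $\rho_{p(\cdot)}(f)\le1$) satisfies $\rho_{q(\cdot)}(f/\lambda)\le 2+\int_{\Omega_d}\lambda^{-s}\,d\mu=:K<\infty$. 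The modular--norm relations recalled in Section 2 then give $\|f/\lambda\|_{q(\cdot)}\le\max\{1,K\}$, whence $\|f\|_{q(\cdot)}\le\lambda\max\{1,K\}\,\|f\|_{p(\cdot)}$ by homogeneity; this bounds the inclusion and in particular shows $f\in\lqv$. Note this half uses neither atomlessness nor infiniteness of $\mu$.

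For necessity, assume $\lpv\hookrightarrow\lqv$; by the closed graph theorem the inclusion is bounded, say $\|\cdot\|_{q(\cdot)}\le N\|\cdot\|_{p(\cdot)}$ with $N\ge1$. To see $q\le p$ a.e. I would argue by contradiction: if $\mu\{q>p\}>0$ then, after passing (using that $\mu$ is atomless) to a subset of finite positive measure on which $p$ is nearly constant and $q\ge p+\delta$ for some $\delta>0$, I would place disjointly supported bumps $h_k\chi_{B_k}$ with heights $h_k\to\infty$ and measures $\mu(B_k)$ chosen so that $\sum_k\int_{B_k}h_k^{p}\,d\mu<\infty$ while $\sum_k\int_{B_k}h_k^{q}\,d\mu=\infty$; the gain factor $h_k^{\delta}\to\infty$ makes this possible and produces $f\in\lpv\setminus\lqv$, a contradiction.

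The crux is the integrability condition. Here I would test with the single function $f=\lambda^{-s/p}\chi_E$ for $\lambda:=2N$, choosing (by atomlessness) a set $E\subseteq\Omega_d$ with $\int_E\lambda^{-s}\,d\mu=1$; such $E$ exists precisely when $\int_{\Omega_d}\lambda^{-s}\,d\mu=\infty$, which is what I assume toward a contradiction. Then $\rho_{p(\cdot)}(f)=\int_E\lambda^{-s}\,d\mu=1$, so $\|f\|_{p(\cdot)}\le1$ and hence $\rho_{q(\cdot)}(f/N)\le1$. On the other hand the pointwise identity $s-\frac{sq}{p}=q$ gives $\lambda^{-sq/p}=\lambda^{q-s}$, so that $\rho_{q(\cdot)}(f/N)=\int_E(\lambda/N)^{q}\lambda^{-s}\,d\mu=\int_E 2^{q}\lambda^{-s}\,d\mu\ge 2\int_E\lambda^{-s}\,d\mu=2$, contradicting $\rho_{q(\cdot)}(f/N)\le1$; hence $\int_{\Omega_d}\lambda^{-s}\,d\mu<\infty$ for $\lambda=2N$. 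I expect the main obstacle to be the technical construction in the step $q\le p$ a.e., where the variability and possible unboundedness of the exponents force the level-set refinement and repeated use of atomlessness; by contrast, the identity $\lambda^{-sq/p}=\lambda^{q-s}$ makes the integrability step short once the test function $\lambda^{-s/p}\chi_E$ and the calibration $\lambda=2N$ are in hand.
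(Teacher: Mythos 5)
Your argument is correct, and it is essentially the standard proof of this embedding theorem. Note that the paper itself offers no proof here: Proposition~\ref{inclusion} is quoted from the cited references (\cite{Libro2} Thm.~2.45, \cite{Libro} Thm.~3.3.1), and your route --- Young's inequality for the conjugate pair $(\tfrac{p}{q},\tfrac{s}{q})$ with $s=\tfrac{pq}{p-q}$ to get $\rho_{q(\cdot)}(f/\lambda)\le \rho_{p(\cdot)}(f)+\int_{\Omega_d}\lambda^{-s}\,d\mu$ for sufficiency, and the test function $\lambda^{-s/p}\chi_E$ with the identity $\tfrac{sq}{p}=s-q$ for necessity --- is exactly the argument those sources use. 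The only places that are sketched rather than executed are the construction of $f\in\lpv\setminus\lqv$ when $\mu\{q>p\}>0$ (which needs the countable partition into level sets of $p$ and implicitly uses that $\mu$ is semifinite to extract finite-measure pieces) and the selection of $E$ with $\int_E\lambda^{-s}\,d\mu=1$ (atomlessness of the measure $A\mapsto\int_A\lambda^{-s}\,d\mu$ plus exhaustion); both are routine and fill in without difficulty.
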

Note that, in contrast with classical Lebesgue spaces \,$L^{p}$, inclusions between variable Lebesgue spaces on infinite measures \,$\lpv \hookrightarrow \lqv$\, can hold. Also, for a finite measure space $(\Omega,\mu)$, the inclusion \,$\lpv \hookrightarrow \lqv $\, holds if and only if \,$q(\cdot) \leq p(\cdot)$\, $\mu$-a.e..

If  $(f_n)$ is  a disjoint sequence in $\lpv$ and  $(g_n)$ is another sequence such that
    $ \sum ||f_n-g_n||_{p(\cdot)} <\infty$ then $(f_n)$ and $(g_n)$ are equivalent (unconditional) basic sequences, i.e.
    $ \sum_{n} x_nf_n \in \lpv$ if and only if $\sum_{n} x_ng_n \in \lpv.$
\begin{prop}$\text{(\cite{N})}$\label{lemaNa}
Let $1\leq p_{n},\,q_{n} <\infty$.  Then \,$\ell_{(p_{n})} =  \ell_{(q_{n})} $\, if and only if   there exists  \,$\alpha > 0 $\,  such that  \,$ \sum_{n=1}^{\infty}\,  \alpha^{\frac{p_{n}q_{n}}{|p_{n}-q_{n}|}}\, < \infty $.
\end{prop}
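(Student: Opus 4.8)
The plan is to prove both implications after two reductions. First, since the condition $\sum_n \alpha^{p_nq_n/|p_n-q_n|}<\infty$ is symmetric in $(p_n)$ and $(q_n)$ and the equality $\ell_{(p_n)}=\ell_{(q_n)}$ splits into the two inclusions $\ell_{(p_n)}\subseteq\ell_{(q_n)}$ and $\ell_{(q_n)}\subseteq\ell_{(p_n)}$, it suffices to handle one inclusion and then invoke symmetry. Second, whenever the two spaces coincide as sets the identity map has closed graph (coordinatewise convergence identifies the limits), so by the closed graph theorem the two Luxemburg norms are equivalent; I will use this only in the necessity part. The whole argument rests on one elementary estimate: for $p>q\ge1$ and $\beta,v>0$, maximizing $g(v)=\beta^q v^q-v^p$ yields $\beta^q v^q\le v^p+C(p,q)\,\beta^{pq/(p-q)}$ with $C(p,q)=(1-q/p)(q/p)^{q/(p-q)}\in(0,1)$; the key point is that the optimization produces exactly the exponent $pq/(p-q)$ appearing in the statement.

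\emph{Sufficiency.} Assume $\sum_n\alpha^{c_n}<\infty$ for some $\alpha$, where $c_n=p_nq_n/|p_n-q_n|$; necessarily $\alpha<1$. Given $(x_n)\in\ell_{(p_n)}$, pick $r>0$ with $\sum_n(x_n/r)^{p_n}<\infty$ and set $s=r/\alpha>r$, $v_n=x_n/r$, $u_n=x_n/s=\alpha v_n$. On the indices with $p_n>q_n$ the elementary inequality with $\beta=\alpha$ gives $u_n^{q_n}\le v_n^{p_n}+\alpha^{c_n}$; on the indices with $p_n\le q_n$, since $v_n\to0$ we have $u_n\le v_n\le1$ eventually, whence $u_n^{q_n}\le u_n^{p_n}\le v_n^{p_n}$. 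Summing, the $(q_n)$-modular satisfies $\rho_{(q_n)}(x/s)\le\sum_n v_n^{p_n}+\sum_n\alpha^{c_n}+(\text{finitely many terms})<\infty$, so $(x_n)\in\ell_{(q_n)}$. This gives $\ell_{(p_n)}\subseteq\ell_{(q_n)}$, and swapping the roles of $p$ and $q$ gives the reverse inclusion.

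\emph{Necessity.} I argue by contraposition: assume $\sum_n\alpha^{c_n}=\infty$ for every $\alpha\in(0,1)$ and show the norms are not equivalent. Writing $A=\{p_n>q_n\}$ and $B=\{q_n>p_n\}$ and noting that $\alpha\mapsto\sum_{A}\alpha^{c_n}$ and $\alpha\mapsto\sum_{B}\alpha^{c_n}$ are nondecreasing with infinite total sum, an upper-set argument forces one of them, say the one over $A$, to be infinite for \emph{every} $\alpha\in(0,1)$ (otherwise both are finite for small $\alpha$, a contradiction). Fixing $\alpha_k\downarrow0$ with $\alpha_k\le1/4$, I greedily extract finite sets $A_k\subseteq A$ with $\sum_{n\in A_k}\alpha_k^{c_n}\in[1,2)$ and define the finitely supported test vectors $x^{(k)}$ by $x_n=\alpha_k^{p_n/(p_n-q_n)}$ for $n\in A_k$. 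A direct computation gives $\rho_{(q_n)}(x^{(k)})=\sum_{A_k}\alpha_k^{c_n}\ge1$, hence $\|x^{(k)}\|_{(q_n)}\ge1$, while $\rho_{(p_n)}\!\big(x^{(k)}/(4\alpha_k)\big)=\sum_{A_k}\alpha_k^{c_n}4^{-p_n}\le\tfrac12$, hence $\|x^{(k)}\|_{(p_n)}\le4\alpha_k$. Thus $\|x^{(k)}\|_{(q_n)}/\|x^{(k)}\|_{(p_n)}\ge1/(4\alpha_k)\to\infty$, so the norms cannot be equivalent; if instead the $B$-sum is the infinite one, the symmetric construction makes the reverse ratio blow up. I expect the delicate point to be exactly here: since the exponents may be unbounded, one cannot pass from modular smallness to norm smallness by a single power, and the naive attempt to exhibit a single infinite vector outside $\ell_{(q_n)}$ stumbles on the universal quantifier over the scaling parameter, because the factor $s^{-q_n}$ can destroy the block contributions when $q_n$ is large. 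Routing the contradiction through non-equivalence of the norms on the finite blocks $x^{(k)}$, together with the clean scaling estimate $\|x^{(k)}\|_{(p_n)}\le4\alpha_k$, is what sidesteps this obstacle.
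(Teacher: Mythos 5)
Your proposal is correct in substance. Note first that the paper gives no proof of this proposition at all: it is quoted from Nakano's 1951 paper \cite{N} as a known fact, so there is nothing in the text to compare against line by line. Your argument is a legitimate self-contained proof along the classical lines: the sufficiency direction rests on the Young-type optimization $\beta^q v^q\le v^p+C(p,q)\beta^{pq/(p-q)}$ (your computation of $C(p,q)$ is right, and the exponent $pq/(p-q)$ does come out of the maximization), and the necessity direction is handled by finitely supported test blocks plus the closed graph theorem, which correctly converts set equality into norm equivalence because the coordinate functionals are continuous for the Luxemburg norm. The block construction is sound: $c_n\ge 1$ guarantees each term $\alpha_k^{c_n}\le\alpha_k\le 1/4$, so the greedy selection lands $\sum_{A_k}\alpha_k^{c_n}$ in $[1,2)$, and the identity $p_n^2/(p_n-q_n)-p_n=c_n$ makes both modular computations come out as claimed; your monotonicity argument showing that one of the two half-sums diverges for \emph{every} $\alpha\in(0,1)$ is also correct. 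One small inaccuracy: in the sufficiency step for the indices with $p_n\le q_n$ you assert $v_n\to 0$, which can fail when the exponents are unbounded (e.g. $v_n=1-1/n$, $p_n=n^2$); what you actually need, and what does follow from $v_n^{p_n}\to 0$ with $p_n\ge 1$, is merely that $v_n<1$ eventually, and that suffices for $u_n^{q_n}\le u_n^{p_n}\le v_n^{p_n}$. With that one sentence repaired, the proof stands.
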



Recall that the \emph{decreasing rearrangement} (cf. \cite{B-S,K-P-S,LT2}) of a measurable function $f$ is the real function $f^*$ on $[0, \mu(\Omega))$ defined by
$$ f^*(x) := \inf \{s \in [0,\mu(\Omega)] \,:\,\mu_f(s) \leq x\},$$
where $\mu_f$ is the distribution function of $f$,
$\mu_f(s) := \mu(\{t\in \Omega : |f(t)| > s\})$.
For  a measurable $f \geq 0$ on $\Omega$, the functions $f$ and $f^*$ are equi-distributed and
$$ \int_{\Omega} f(t) d\mu \,= \, \int_0^{\mu(\Omega)} f^*(x) dx. $$

 A Banach function lattice  is said to be \emph{rearrangement invariant} if every two equi-distributed functions have the same norm. Orlicz spaces (cf. \cite{K-R,Mu}) are examples of rearrangement invariant spaces  while variable Lebesgue spaces are not. If $\varphi$ is a non-decreasing unbounded positive convex function on $[0,\infty)$ with  $\varphi(0)=0 $, the \emph{Orlicz space} \,$L^{\varphi}(\mu)$\, consists of all measurable functions classes $f$ on $(\Omega,\mu)$ such that for some  $r>0$
$$\int_{\Omega} \varphi(r |f|)d\mu  <\infty .$$

In the class of Orlicz spaces there are examples of inclusions  \,$L^{\varphi}(\mu) \hookrightarrow L^{\psi}(\mu)$\, for a finite measure which are DSS but not $L$-weakly compact.
Let us  recall the  DSS  criterion and the $L$-weak compactness criterion for inclusions between  Orlicz spaces:


    \begin{prop}(\cite{H-S} Prop. 3.2). Let $(\Omega,\mu)$ be an atomless finite measure space  and \,$\psi \leq \varphi $ \,Orlicz functions with the $\Delta_{2}$-condition. An inclusion \,$L^{\varphi}(\mu)\hookrightarrow L^{\psi}(\mu) $ \, is  DSS  \, if and only if for every natural $n$ and any constant $A>0$ there exist \,$1\leq x_{1} < x_{2}< ...< x_{n}$\, and  $c_{i} > 0$ for  $i=1,...,n$ such that for  $t \geq 1$
    $$\sum_{i=1}^{n}  c_{i} \,\psi(tx_{i}) \, \leq \, A  \,  \sum_{i=1}^{n}  \,c_{i} \, \varphi(tx_{i})$$
    \end{prop}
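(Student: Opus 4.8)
The plan is to translate the disjoint strict singularity of the inclusion $i\colon L^{\varphi}(\mu)\rw L^{\psi}(\mu)$ into a property of the modulars $\rho_{\varphi}(f)=\int_{\Omega}\varphi(|f|)\,d\mu$ and $\rho_{\psi}(f)=\int_{\Omega}\psi(|f|)\,d\mu$ evaluated on suitably normalized disjointly supported simple functions. Since $\psi\leq\varphi$ the inclusion is automatically bounded, so $i$ fails to be DSS precisely when there is a normalized pairwise disjoint sequence $(f_{n})$ in $L^{\varphi}(\mu)$ and a constant $c>0$ with $\|g\|_{\psi}\geq c\,\|g\|_{\varphi}$ for every $g\in[f_{n}]$. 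The first task is therefore to understand the subspaces spanned by such disjoint sequences.

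First I would reduce to a canonical form. Because both Orlicz functions satisfy the $\Delta_{2}$-condition and $(\Omega,\mu)$ is atomless and finite, any normalized disjoint sequence can be perturbed, using the stability principle recalled just before Proposition \ref{lemaNa}, into a disjoint sequence of simple functions $f_{n}=\sum_{i=1}^{k_{n}}x^{(n)}_{i}\chi_{A^{(n)}_{i}}$ without altering the isomorphic type of $[f_{n}]$; passing to a subsequence, the supports shrink in measure, so the relevant values $x^{(n)}_{i}$ run to infinity. For a single ``profile'' block built from data $1\leq x_{1}<\dots<x_{n}$ and weights $c_{i}>0$ proportional to the measures of the pieces, a direct computation gives $\rho_{\varphi}(tf)=s\sum_{i}c_{i}\varphi(tx_{i})$ and $\rho_{\psi}(tf)=s\sum_{i}c_{i}\psi(tx_{i})$, where $s$ is the scale of the support. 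Thus the norm of an element of the span is governed by the Orlicz sequence functions $\lambda\mapsto\sum_{i}c_{i}\varphi(\lambda x_{i})$ and $\lambda\mapsto\sum_{i}c_{i}\psi(\lambda x_{i})$, and, via the description of disjoint sequences in Orlicz spaces together with the equality criterion for Nakano/Orlicz sequence spaces in the spirit of Proposition \ref{lemaNa}, the two norms are equivalent on $[f_{n}]$ if and only if these two sequence functions generate the same sequence space.

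With this dictionary the two implications should fall out by matching quantifiers. For sufficiency of the stated condition, suppose that for every $n$ and every $A>0$ one can choose $1\leq x_{1}<\dots<x_{n}$ and $c_{i}>0$ with $\sum_{i}c_{i}\psi(tx_{i})\leq A\sum_{i}c_{i}\varphi(tx_{i})$ for all $t\geq1$. Given an arbitrary normalized disjoint sequence, I would represent its span through a limiting profile from the asymptotic index set of $\varphi$, and use the condition with $A\rw0$ to produce inside that span normalized vectors whose $L^{\psi}(\mu)$-norm tends to $0$; this forbids any lower $\psi$-estimate and yields DSS. For necessity I would argue by contraposition: if the condition fails there are $n$ and $A_{0}>0$ such that every $n$-block profile satisfies $\sum_{i}c_{i}\psi(t_{0}x_{i})>A_{0}\sum_{i}c_{i}\varphi(t_{0}x_{i})$ at some $t_{0}\geq1$; selecting blocks and scales that realize such a uniform lower modular comparison at the normalization level and placing rescaled disjoint copies on a sequence of shrinking sets produces a normalized disjoint sequence on which $\|\cdot\|_{\psi}$ and $\|\cdot\|_{\varphi}$ are equivalent, so $i$ is an isomorphism there and is not DSS.

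The main obstacle is the passage carried out in the second step: making rigorous the identification of the span of a general disjoint sequence with an Orlicz sequence space built from the asymptotic profiles of $\varphi$ and $\psi$, and, above all, controlling the range of the scaling parameter $t$ so that the pointwise modular inequality for $t\geq1$ captures norm comparison on the whole span rather than on a single vector. The quantifier bookkeeping, namely that finitely many blocks (the ``for every $n$'') suffice to approximate the limiting Orlicz functions, and that uniform smallness in $A$ of the modular ratio is equivalent to the absence of any isomorphism-supporting disjoint sequence, is where the $\Delta_{2}$-condition and the finiteness and atomlessness of $\mu$ must be used most carefully.
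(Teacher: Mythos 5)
The paper does not prove this proposition at all: it is quoted verbatim from \cite{H-S} (Prop.\ 3.2) as a known result, so there is no internal proof to compare your attempt against. What can be judged is whether your proposal would stand on its own, and as written it is an outline with two acknowledged but unresolved gaps rather than a proof.

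The first gap is the representation step. You assert that the span $[f_n]$ of a normalized disjoint sequence is governed by a single pair of ``profile'' Orlicz sequence functions $\lambda\mapsto\sum_i c_i\varphi(\lambda x_i)$ and $\lambda\mapsto\sum_i c_i\psi(\lambda x_i)$, and that norm equivalence on $[f_n]$ reduces to equality of the corresponding sequence spaces via something like Proposition \ref{lemaNa}. In general $[f_n]$ is a Musielak--Orlicz sequence space with a \emph{different} Orlicz function for each $n$ (each $f_n$ has its own profile), and Proposition \ref{lemaNa} only covers the case of spaces $\ell_{(p_n)}$ with power functions; the analogous equality criterion for general Musielak--Orlicz sequence spaces is exactly the technical content you would need to import or prove, together with the fact (requiring $\Delta_2$, atomlessness and finiteness of $\mu$) that the relevant profiles can be taken with $x_i\geq 1$ and only the range $t\geq 1$ matters. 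The second and more serious gap is the quantifier transfer in the necessity direction. Negating the condition gives you, for some $n$ and $A_0$, that \emph{every} $n$-point profile violates the inequality at \emph{some} $t_0\geq 1$ depending on the profile. To conclude non-DSS you must produce a single disjoint sequence on whose entire span a \emph{uniform} lower $\psi$-estimate holds, i.e.\ an inequality valid for all $t\geq1$ simultaneously for the profiles you use; a pointwise failure at one $t_0$ per profile does not obviously yield this, and bridging that gap (via convexity of the set of admissible discretizations and a compactness or diagonal argument) is precisely where the work in \cite{H-S} lies. Symmetrically, in the sufficiency direction you need to show that the finite profiles furnished by the hypothesis can be realized, up to small perturbation, \emph{inside} the span of an arbitrary given disjoint sequence, not merely somewhere in $L^{\varphi}(\mu)$; your sketch appeals to a ``limiting profile from the asymptotic index set of $\varphi$'' without constructing it. Until these two steps are made precise, the argument is a plausible roadmap but not a proof.
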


   \begin{prop}\label{lem-0}(\cite{K-R}, \cite{A-S} p.1369) Let \,$(\Omega,\mu)$\,  be an atomless finite measure space and Orlicz functions \,$\psi \leq  \varphi $. An inclusion \,$L^{\varphi}(\mu)\hookrightarrow L^{\psi}(\mu) $\, is \,$L$-weakly compact if and only if
        $$  \lim_{t\mapsto \infty}  \,\frac{\varphi^{-1}(t)}{\psi^{-1}(t)}\, =\, 0. $$
   \end{prop}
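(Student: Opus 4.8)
The plan is to reduce the analytic limit condition to a pointwise comparison between $\varphi$ and $\psi$, and then to exploit the description of $L$-weak compactness in terms of functions supported on sets of small measure. The starting point is the elementary identity $\|\chi_A\|_\varphi = 1/\varphi^{-1}(1/\mu(A))$, valid for any measurable $A$ with $\mu(A)>0$ (and likewise for $\psi$), which follows at once from $\rho_\varphi(\chi_A/r)=\varphi(1/r)\mu(A)$ and the definition of the Luxemburg norm. Consequently the ratio of norms of a characteristic function is $\|\chi_A\|_\psi/\|\chi_A\|_\varphi = \varphi^{-1}(1/\mu(A))/\psi^{-1}(1/\mu(A))$, so the quantity in the statement is exactly the asymptotics of these ratios as $\mu(A)\rw 0$. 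I would also record the equivalent form of the hypothesis: the substitution $s=\varphi^{-1}(t)$ shows that $\lim_{t\rw\infty}\varphi^{-1}(t)/\psi^{-1}(t)=0$ holds if and only if for every $\lambda>0$ there is $s_0=s_0(\lambda)$ with $\psi(\lambda s)\leq \varphi(s)$ for all $s\geq s_0$.

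For the necessity I would argue by contraposition. Assuming the limit is not $0$, pick $\delta_0>0$ and $t_n\rw\infty$ with $\varphi^{-1}(t_n)/\psi^{-1}(t_n)\geq\delta_0$; passing to a subsequence we may assume $\sum_n 1/t_n<\mu(\Omega)$, and since $\mu$ is atomless we may choose pairwise disjoint sets $A_n$ with $\mu(A_n)=1/t_n$. Then $\chi_{A_n}\rw 0$ $\mu$-a.e., while the unit vectors $f_n=\chi_{A_n}/\|\chi_{A_n}\|_\varphi\in B_{L^\varphi}$ satisfy $\|f_n\chi_{A_n}\|_\psi=\|f_n\|_\psi=\varphi^{-1}(t_n)/\psi^{-1}(t_n)\geq\delta_0$ by the identity above. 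Hence $\sup_{f\in B_{L^\varphi}}\|f\chi_{A_n}\|_\psi\geq\delta_0$ for all $n$, so the inclusion is not $L$-weakly compact.

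For the sufficiency, first note that in a finite measure space $\chi_{A_n}\rw 0$ $\mu$-a.e. forces $\mu(A_n)\rw 0$ by dominated convergence, so it suffices to show that $\sup\{\|g\|_\psi : \rho_\varphi(g)\leq 1,\ \mu(\mathrm{supp}\,g)\leq\delta\}\rw 0$ as $\delta\rw 0$ (applied to $g=f\chi_{A_n}$, which has $\rho_\varphi(g)\leq\rho_\varphi(f)\leq 1$). Fix a target $\epsilon=1/\lambda$. Using the equivalent form of the hypothesis at the dilation $2\lambda$, choose $s_0$ with $\psi(2\lambda s)\leq\varphi(s)$ for $s\geq s_0$; by convexity and $\psi(0)=0$ one has $\psi(\lambda s)\leq\tfrac12\psi(2\lambda s)\leq\tfrac12\varphi(s)$ for $s\geq s_0$. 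Splitting the modular $\rho_\psi(\lambda g)=\int_\Omega\psi(\lambda|g|)\,d\mu$ over $\{|g|\geq s_0\}$ and $\{|g|<s_0\}$ yields $\rho_\psi(\lambda g)\leq \tfrac12\rho_\varphi(g)+\psi(\lambda s_0)\,\mu(\mathrm{supp}\,g)\leq\tfrac12+\psi(\lambda s_0)\delta$. Choosing $\delta\leq 1/(2\psi(\lambda s_0))$ makes $\rho_\psi(\lambda g)\leq 1$, whence $\|g\|_\psi\leq\epsilon$; as $\epsilon$ is arbitrary this proves the claim.

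The main obstacle is precisely the gain of the factor $\tfrac12$ in the sufficiency step: the crude comparison $\psi(\lambda s)\leq\varphi(s)$ only yields $\rho_\psi(\lambda g)\leq 1+\psi(\lambda s_0)\delta$, which does not close the estimate. The convexity inequality $\psi(\theta x)\leq\theta\psi(x)$ for $0<\theta\leq 1$, combined with applying the hypothesis at the larger dilation $2\lambda$, is what upgrades the bound on the large-values part from $\rho_\varphi(g)$ to $\tfrac12\rho_\varphi(g)$ and lets the small-values part be absorbed using $\mu(\mathrm{supp}\,g)\rw 0$. I do not expect to need the $\Delta_2$-condition anywhere, consistent with its absence from the hypotheses.
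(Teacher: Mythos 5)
Your proof is correct. The paper itself does not prove this proposition --- it is quoted from the literature (Krasnoselskii--Rutickii and Astashkin--Semenov) with no argument supplied --- so there is nothing internal to compare against; the only related computation in the paper is the remark immediately following the statement, which derives $\psi(t)\leq\epsilon\,\varphi(t)$ for large $t$ from the limit condition, and that is essentially your ``equivalent form'' $\psi(\lambda s)\leq\varphi(s)$ for $s\geq s_0(\lambda)$. Your argument is a clean, self-contained proof: the identity $\|\chi_A\|_\varphi=1/\varphi^{-1}(1/\mu(A))$ makes the necessity direction immediate via normalized characteristic functions of disjoint sets of measure $1/t_n$, and the sufficiency direction correctly closes the modular estimate $\rho_\psi(\lambda g)\leq\tfrac12\rho_\varphi(g)+\psi(\lambda s_0)\,\mu(\mathrm{supp}\,g)$ by applying the hypothesis at the dilation $2\lambda$ and using convexity to gain the factor $\tfrac12$ --- exactly the point where a naive comparison would fail, as you note. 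You are also right that no $\Delta_2$-condition is needed. The only technicality worth a sentence in a written version is that $\varphi^{-1}$ should be read as the generalized right-continuous inverse (the paper's Orlicz functions are convex, finite-valued and unbounded, hence continuous, so the identity for $\|\chi_A\|_\varphi$ and the substitution $s=\varphi^{-1}(t)$ go through), but this does not affect the argument.
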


In particular this  condition implies  that \,   $ \lim_{s\mapsto\infty} \, \frac{\psi(s)}{\varphi(s)} = 0$.
Indeed, given  $0<\epsilon <1$  there exists $s_{0}>0$ such that  \, $\varphi^{-1}(s) \leq \epsilon \psi^{-1}(s)$\, for   $s>s_{0}$. So, for \, $s=\psi(t)>s_{0}$, we have  $\varphi^{-1}(\psi(t)) \leq \epsilon \psi^{-1}(\psi(t)) = \epsilon $, hence \, $\psi(t) \leq  \varphi(\epsilon t)\, \leq \epsilon\,\varphi(t)$ \,  for\,  $t > \varphi^{-1}(s_{0})$.


Consider now  the Orlicz function  $\psi$  defined  in (\cite{H-S2} Thm. A) which verifies
the inclusion \,$L^{p}[0,1] \hookrightarrow L^{\psi}[0,1]$, for a fixed $p>1$. Using the above criterion, it is proved  that the inclusion \,  $L^{p}[0,1] \hookrightarrow L^{\psi}[0,1]$ \, is DSS and  that
$$ \limsup_{t\mapsto \infty}\,  \,\frac{\psi(t)}{t^{p}}\,   \geq  \limsup_{n\mapsto\infty}\,\, \frac{\psi(2^{n})}{2^{np}}\, >  0
$$
(see \cite{H-S2} p.184). So we deduce, by above $L$-weak compactness criterion, that the inclusion  \, $L^{p}[0,1] \hookrightarrow L^{\psi}[0,1]$  \, is not $L$-weakly compact.


    In the setting of variable Lebesgue spaces  $\lpv$,   a  $L$-weakly  compact inclusion criterion has been given by Edmunds, Gogathisvili  and Nekvinda in (\cite{E-G-N} Thm 3.4):

 \begin{prop}\label{edm}(\cite{E-G-N})
  Let a bounded open subset \,$\Omega \subset \re^n$\
   and  bounded exponents  \,$ q(\cdot) \leq p(\cdot) \, \leq \,p^+ < \infty$. The inclusion \,
   $ L^{p(\cdot)}(\Omega) \hookrightarrow L^{q(\cdot)}(\Omega)$\, is $L$-weakly compact if and only if for every \,$a>1$
    $$ \int_{0}^{|\Omega|} a^{\left(\frac{1}{p-q}\right)^*(x)} dx < \infty.
    $$

  \end{prop}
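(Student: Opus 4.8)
The plan is to recast the equi-integrability condition defining $L$-weak compactness as a uniform smallness condition for the \emph{local inclusion norms} on sets of small measure, and then to evaluate these norms through the exponent $r(\cdot):=\frac{p(\cdot)\,q(\cdot)}{p(\cdot)-q(\cdot)}$, which satisfies $\frac{1}{q(t)}=\frac{1}{p(t)}+\frac{1}{r(t)}$ a.e. Since the exponents are bounded, $p\,q$ lies between $(p^-)^2$ and $(p^+)^2$, so $\frac{1}{p-q}$ and $r(\cdot)$ differ only by a bounded factor; because the stated condition quantifies over \emph{all} $a>1$, this factor is absorbed and the criterion $\int_0^{|\Omega|}a^{(\frac{1}{p-q})^*(x)}dx<\infty$ for all $a>1$ is equivalent to $\int_0^{|\Omega|}a^{r^*(x)}dx<\infty$ for all $a>1$. (If $\mu\{p=q\}>0$ both the integral condition and $L$-weak compactness fail, by testing with normalized indicators of small subsets of $\{p=q\}$, so I may assume $p>q$ a.e., whence $r<\infty$ a.e.) The computational engine is the elementary equivalence, valid for every $\delta>0$ and $a>1$,
$$\sup_{\mu(A)\le\delta}\|\chi_A\|_{r(\cdot)}\le\tfrac1a\iff\sup_{\mu(A)\le\delta}\int_A a^{r(t)}\,d\mu\le1\iff\int_0^{\delta}a^{r^*(x)}\,dx\le1,$$
where the last step is the Hardy--Littlewood identity $\sup_{\mu(A)\le\delta}\int_A\varphi\,d\mu=\int_0^\delta\varphi^*(x)\,dx$ applied to $\varphi=a^{r}$, together with $(a^{r})^*=a^{r^*}$.

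For the implication ``integral finite $\Rightarrow$ $L$-weakly compact'' I would argue directly. The generalized H\"older inequality associated with $\frac1q=\frac1p+\frac1r$ gives $\|f\chi_A\|_{q(\cdot)}\le K\|f\|_{p(\cdot)}\|\chi_A\|_{r(\cdot)}$, hence $\sup_{\|f\|_{p(\cdot)}\le1}\|f\chi_A\|_{q(\cdot)}\le K\|\chi_A\|_{r(\cdot)}$. Given any sequence $(A_n)$ with $\chi_{A_n}\to0$ a.e., finiteness of $\mu$ forces $\mu(A_n)\to0$; since $\int_0^\delta a^{r^*}\,dx\to0$ as $\delta\to0$ for each fixed $a>1$ (absolute continuity of the integral), the engine yields $\sup_{\mu(A)\le\delta}\|\chi_A\|_{r(\cdot)}\le\frac1a$ for small $\delta$, and letting $a\to\infty$ shows $\sup_{\mu(A)\le\delta}\|\chi_A\|_{r(\cdot)}\to0$. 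Combining with $\mu(A_n)\to0$ gives $\sup_{\|f\|_{p(\cdot)}\le1}\|f\chi_{A_n}\|_{q(\cdot)}\to0$, which is exactly $L$-weak compactness.

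The converse is the hard part, and I expect it to be the main obstacle. Arguing by contraposition, suppose $\int_\Omega a_0^{r}\,d\mu=\infty$ for some $a_0>1$; I must manufacture a normalized sequence $(f_\ell)$, disjointly supported on sets of measure tending to $0$, with $\|f_\ell\|_{q(\cdot)}\ge c_0>0$, which destroys equi-integrability. The natural test functions are the normalized indicators $f_\ell=\chi_{A_\ell}/\|\chi_{A_\ell}\|_{p(\cdot)}$, for which $\|f_\ell\|_{q(\cdot)}=\|\chi_{A_\ell}\|_{q(\cdot)}/\|\chi_{A_\ell}\|_{p(\cdot)}$, and the elementary bounds $\mu(A)^{1/s_{|A}^{-}}\le\|\chi_A\|_{s(\cdot)}\le\mu(A)^{1/s_{|A}^{+}}$ (for $\mu(A)\le1$) give $\|f_\ell\|_{q(\cdot)}\ge\mu(A_\ell)^{\,1/q_{|A_\ell}^{-}-1/p_{|A_\ell}^{+}}$. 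The difficulty is that this exponent is controlled by $p-q$ only when $p$ \emph{itself} oscillates little on $A_\ell$: I therefore plan a double partition of $\Omega_d=\{p>q\}$, first into the bands $D_\ell=\{\ell\le r<\ell+1\}$ and then each $D_\ell$ into $p$-slabs of width $\sim1/\ell$ (at most $\sim p^+\ell$ of them). On such a slab the exponent above is $O(1/\ell)$, so $\|f_\ell\|_{q(\cdot)}\ge\mu(A_\ell)^{C/\ell}$, which stays bounded below precisely when $\mu(A_\ell)\gtrsim b^{-\ell}$. The remaining point is a counting argument: from $\int_\Omega a_0^{r}\,d\mu=\infty$ one gets $\sum_\ell a_0^{\ell}\mu(D_\ell)=\infty$, and choosing the slab-width constant so that the admissible base is $b=2a_0>a_0$, the assumption $\mu(D_\ell)<C\ell\,b^{-\ell}$ eventually would give $\sum_\ell a_0^\ell\mu(D_\ell)\le C\sum_\ell\ell\,2^{-\ell}<\infty$, a contradiction. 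Hence for infinitely many $\ell$ some slab $A_\ell$ has $\mu(A_\ell)\ge(2a_0)^{-\ell}$, producing the desired $f_\ell$ (disjoint, since the $r$-bands are disjoint) with $\|f_\ell\|_{q(\cdot)}\ge c_0:=(2a_0)^{-C}$, while $\mu(A_\ell)\le\mu(D_\ell)\to0$. The delicate balance --- choosing the $p$-slab width $\sim1/\ell$ to defeat the oscillation of $p$ while keeping the per-band slab count low enough that the geometric series still diverges with base $2a_0$ --- is where the bounded-exponent hypothesis is essential, and is the crux of the whole argument.
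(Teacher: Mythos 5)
Your argument is sound, but it is worth noting first that the paper itself gives no proof of this proposition: it is quoted from \cite{E-G-N}, and what the paper actually proves are the generalizations in Theorems \ref{Teor-2} and \ref{Teor-2b}. Measured against those proofs, your forward implication is essentially the paper's: the three-exponent H\"older inequality for $\frac1q=\frac1p+\frac1r$ with $r=\frac{pq}{p-q}$, combined with the equivalence of Proposition \ref{Teo-1} (your ``engine'' is exactly the modular/unit-ball identity plus $(a^{r})^*=a^{r^*}$ that underlies that proposition), and your observation that quantifying over all $a>1$ absorbs the bounded factor $pq$ between $\frac{1}{p-q}$ and $r$ is a correct and necessary reduction. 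Where you genuinely diverge is the converse. The paper never argues ``not (1) $\Rightarrow$ not $L$-weakly compact'' directly: it closes the cycle through $M$-weak compactness and disjoint strict singularity, with the real work done in Proposition \ref{prop-33} (the analysis of the normalized indicators $\chi_{B_n}/\mu(B_n)^{1/p(t)}$ on level sets of $(\frac{p-q}{pq})^*$, yielding the limit condition $(\mu(\Omega)-x)^{(\frac{p-q}{pq})^*(x)}\to 0$) and then Lemmas \ref{lema-a}--\ref{prop-1} to convert that limit back into the integral condition. You instead give a direct contrapositive construction: band $\Omega_d$ by the level sets $D_\ell=\{\ell\le r<\ell+1\}$, slice each band into $O(p^+\ell)$ slabs on which $p$ (hence $q$) oscillates by $O(1/\ell)$, and run a pigeonhole/counting argument against the divergent series $\sum_\ell a_0^\ell\mu(D_\ell)$ to extract disjoint slabs $A_\ell$ with $\mu(A_\ell)\ge(2a_0)^{-\ell}$ and $\|\chi_{A_\ell}\|_{q(\cdot)}/\|\chi_{A_\ell}\|_{p(\cdot)}\ge(2a_0)^{-C}$; I checked the exponent estimate $1/q^-_{|A_\ell}-1/p^+_{|A_\ell}\le C/\ell$ and the convergence of $\sum_\ell \ell\,2^{-\ell}$, and they work (any base $b>a_0$ would do, so the ``delicate balance'' you describe is less delicate than advertised). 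Your route is more elementary and self-contained for this statement, avoiding the rearrangement lemmas and the detour through DSS altogether; its cost is that the slab count and the oscillation bound both use $p^+<\infty$ in an essential way, so unlike the paper's machinery it does not extend to the unbounded-exponent setting of Theorem \ref{Teor-2b}.
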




Other $L$-weak  compactness inclusion  characterization of \, $L^{p(\cdot)}(\mu) \hookrightarrow L^{q(\cdot)}(\mu)$ (De la Valle\'e-Poussin type) is given in \cite{H-R-S-21} (Prop. 3.3), \cite{Tesis}.





\section{Previous Results}

We   will  study  inclusion operators between variable Lebesgue spaces on a finite measure space \,$\lpv \hookrightarrow \lqv $, looking for suitable DSS characterizations in terms of the exponents. In this section we collect some preliminary results.
We  will assume \,$q(\cdot) \leq \, p(\cdot)$ $\mu$-a.e. with \,$\mu\{t\in \Omega:\,q(t) = p(t)\} = 0$\,  (otherwise the inclusion $ \lpv \hookrightarrow \lqv$\, would be trivially non-DSS). No restrictions  on  the    exponents will be assumed.


Let  $r : \Omega \rw [1,\infty)$ be an $exponent$  and consider the function \, $ a ^{r(t)}$\, on $\Omega$ for some  $a>1$. It  holds  that \, $ (a^{r(\cdot)})^*(x) = \,a^{r^{*}(x)}$ for all $x>0$   (cf. \cite{E-G-N}  Lemma 2.10), hence
$$ \int_{\Omega} a^{r(t)} d\mu = \int_0^{\mu(\Omega)} (a^{r(\cdot)})^*(x)\, dx = \int_0^{\mu(\Omega)} a^{r^*(x)} dx.
$$


\begin{lemma} \label{lema-1} Let \,$(\Omega,\mu)$ be an atomless finite measure space and $r(\cdot)$ be an exponent. If there exists \, $a>1$\, such that \,  $\int_{0}^{\mu(\Omega)} a^{r^{*}(x)} dx =\infty$, then there exist \, $\beta > 0$ and a disjoint measurable sequence \,   $(E_n)_{n=1}^{\infty}$ \, such that, for every natural $n$,
$$ ||\chi_{E_n}||_{r(\cdot)} \geq \, \beta.
$$
\end{lemma}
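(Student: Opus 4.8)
The plan is to reduce the conclusion to a statement about a single auxiliary measure and then extract the disjoint sets by an exhaustion argument. First I would rewrite the hypothesis using the equi-distribution identity recalled just before the lemma: since $\int_0^{\mu(\Omega)} a^{r^*(x)}\,dx = \int_\Omega a^{r(t)}\,d\mu$, the assumption is exactly that $\int_\Omega a^{r(t)}\,d\mu = \infty$ for this fixed $a>1$. This is the convenient form to work with, because the Luxemburg norm of a characteristic function is governed by the very same integrand: for any measurable $E$ and $\beta := 1/a$ one has
$$\rho_{r(\cdot)}\!\left(\frac{\chi_E}{\beta}\right) = \int_E \beta^{-r(t)}\,d\mu = \int_E a^{r(t)}\,d\mu.$$

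Next I would introduce the set function $\nu(A):=\int_A a^{r(t)}\,d\mu$, which is a measure absolutely continuous with respect to $\mu$. Three properties matter: $\nu(\Omega)=\infty$ (the hypothesis), $\nu$ is $\sigma$-finite (since $\Omega=\bigcup_k\{r\le k\}$ and $\nu(\{r\le k\})\le a^k\mu(\Omega)<\infty$), and $\nu$ is atomless (as $\mu$ is atomless and $\nu\ll\mu$). I would then construct the disjoint sequence inductively: having chosen disjoint $E_1,\dots,E_{n-1}$ with $\nu(E_j)=2$, the remainder $R_{n-1}=\Omega\setminus\bigcup_{j<n}E_j$ still satisfies $\nu(R_{n-1})=\infty$; by $\sigma$-finiteness it contains a subset of finite $\nu$-measure at least $2$, and by the intermediate value property of atomless measures (Sierpi\'nski's theorem) this subset contains an $E_n$ with $\nu(E_n)=2$. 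This yields a disjoint measurable sequence $(E_n)$ with $\nu(E_n)=2$ for every $n$.

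Finally I would convert the measure condition into the desired norm bound. With $\beta=1/a$, the displayed identity gives $\rho_{r(\cdot)}(\chi_{E_n}/\beta)=\nu(E_n)=2>1$, so by the standard modular--norm equivalence ($\rho_{r(\cdot)}(f)>1$ iff $\|f\|_{r(\cdot)}>1$) recalled in the Preliminaries one obtains $\|\chi_{E_n}/\beta\|_{r(\cdot)}>1$, that is $\|\chi_{E_n}\|_{r(\cdot)}>\beta$. Hence $\beta=1/a$ works uniformly in $n$.

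I expect the only genuinely delicate step to be the extraction of the disjoint sets of prescribed $\nu$-measure. It rests on the atomlessness of $\nu$ --- inherited from that of $\mu$ --- to guarantee subsets of any intermediate measure, together with $\sigma$-finiteness to locate, inside a set of infinite $\nu$-mass, a subset of finite mass exceeding $2$. The choice of the target value $2$ (rather than $1$) is a minor but deliberate point: it secures the strict inequality $\rho_{r(\cdot)}(\chi_{E_n}/\beta)>1$, which is precisely what the modular--norm equivalence needs in order to deliver a clean lower bound on the norm.
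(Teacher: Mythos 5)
Your argument is correct, and it shares the paper's overall skeleton --- reduce everything to the divergence of $\int_\Omega a^{r(t)}\,d\mu$, carve that infinite mass into disjoint pieces each carrying mass $>1$, and then read off $\rho_{r(\cdot)}(\chi_{E_n}/\beta)>1$ with $\beta=1/a$ to get the norm bound from the modular--norm equivalence. Where you genuinely diverge is in how the disjoint pieces are extracted. The paper stays on the rearrangement side: since $a^{r^*}$ is decreasing, the divergence of $\int_0^{\mu(\Omega)}a^{r^*(x)}\,dx$ is concentrated at $0$, so one picks $t_n\searrow 0$ with $\int_{t_{n+1}}^{t_n}a^{r^*(x)}\,dx>1$, pulls these back to a nested sequence $F_{n+1}\subset F_n$ with $\mu(F_n)=t_n$ realizing $\int_{F_n}a^{r(t)}\,d\mu=\int_0^{t_n}a^{r^*(x)}\,dx$ (Bennett--Sharpley, Lemma 2.2.5), and sets $E_n=F_n\setminus F_{n+1}$. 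You instead work directly on $\Omega$ with the weighted measure $\nu(A)=\int_A a^{r(t)}\,d\mu$, check that it is infinite, $\sigma$-finite and atomless, and invoke the intermediate-value property of atomless measures (Sierpi\'nski) to peel off disjoint sets of $\nu$-measure exactly $2$. Your route avoids the rearrangement machinery entirely after the initial equimeasurability identity and is arguably more elementary and more transparent about what drives the construction (only atomlessness of $\mu$ and finiteness of $r$ are used); the paper's route has the mild advantage of exhibiting the divergence as localized near $x=0$ on the rearrangement axis, which is the picture exploited elsewhere in the paper. All the auxiliary facts you rely on do hold: $\nu$ is atomless because every set of positive $\mu$-measure has positive $\nu$-measure (as $a^{r(t)}\geq a>0$), and $\sigma$-finiteness follows from $\nu(\{r\leq k\})\leq a^k\mu(\Omega)<\infty$. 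No gap.
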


\begin{proof}
Let  $0<\beta <1$ \,  with  \,  $1/\beta > a $. Since the rearrangement  $r^{*}(\cdot)$ is decreasing and $\int_0^{\mu(\Omega)} a^{r^*(x)} dx =\infty$\,, there exists a positive  sequence  $(t_{n})$, with  $t_{n}\searrow 0$ \, and  $t_{1}= \mu(\Omega)$ , such that
$$\int_{t_{n+1}}^{t_{n}} \, a^{r^{*}(x)}\, dx > 1.
$$
Now, for each  $t_{n} >0$ we can find a measurable set $F_{n} \subset \Omega$ with \,$\mu(F_{n})= t_{n}$\, such that
$$
\int_{F_n}  \, a^{r(t)}\,d\mu\, =  \,\int_{0}^{t_{n}} (a^{r(\cdot)})^{*}(x) \, dx = \, \int_{0}^{t_{n}} \, a^{r^*(x)} \,dx.
$$
Moreover, the sets $(F_{n})$ can be defined so that  $F_{n+1} \subset F_{n}$ \, since $t_{n+1}< t_n$\, (cf. \cite{B-S} Lemma 2.2.5).

Consider now the  disjoint measurable sequence  $(E_{n})$ where \, $E_{n} := F_{n} \backslash  F_{n+1}$. Then,

\begin{align*}
\int_{\Omega} (\frac{\chi_{E_{n}}}{\beta})^{r(t)} \, d\mu 
\geq{}  &   \int_{E_{n}} (a \chi_{E_{n}})^{r(t)}  d\mu \, = \int_{F_{n}} (a \chi_{F_{n}})^{r(t)}  d\mu \,-  \int_{F_{n+1}} (a \chi_{F_{n+1}})^{r(t)}  d\mu \\
={} &    \int_{0}^{t_{n}} a^{r^*(x)}\,dx \, - \int_{0}^{t_{n+1}}\,a^{r^*(x)} dx =  \int_{t_{n+1}}^{t_{n}} a^{r^*(x)}\,dx\,\, > \,\, 1
\end{align*}
for every natural $n$. Hence, $||\chi_{E_n}||_{r(\cdot)} \geq \, \beta$.
\end{proof}




\begin{prop} \label{Teo-1} Let $(\Omega, \mu)$ be an atomless finite measure space and an exponent \,$r(\cdot)$. Then every sequence $(E_n)_{n=1}^{\infty}$ in \,$\Omega$  with $\chi_{E_n} \rw 0$ $\mu$-a.e. satisfies \,$ ||\chi_{E_n}||_{r(\cdot)} \rw 0  $ \, if and only if for every \, $a>1$,
$$ \int_0^{\mu(\Omega)} a^{r^*(x)} dx < \infty. $$
\end{prop}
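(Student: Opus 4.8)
The plan is to prove both implications, handling the substantive direction by dominated convergence and the converse by the explicit construction already available in Lemma \ref{lema-1}.

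For the implication from the integral condition to the convergence of norms, I would first record that for every $a>1$ the hypothesis, combined with the rearrangement identity $\int_\Omega a^{r(t)}\,d\mu = \int_0^{\mu(\Omega)} a^{r^*(x)}\,dx$ stated just before Lemma \ref{lema-1}, says precisely that the function $a^{r(\cdot)}$ belongs to $L^1(\Omega,\mu)$. Now fix a sequence $(E_n)$ with $\chi_{E_n}\rw 0$ $\mu$-a.e., and fix $\lambda>0$. Choosing $a:=\max\{\lambda,2\}>1$ gives the pointwise bound $(\lambda\chi_{E_n}(t))^{r(t)} \le a^{r(t)}$ for all $t$ and $n$, while $(\lambda\chi_{E_n}(t))^{r(t)} = \lambda^{r(t)}\chi_{E_n}(t) \rw 0$ $\mu$-a.e. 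Since $a^{r(\cdot)}\in L^1(\mu)$ dominates the sequence, the dominated convergence theorem yields $\rho_{r(\cdot)}(\lambda\chi_{E_n}) = \int_\Omega (\lambda\chi_{E_n})^{r(t)}\,d\mu \rw 0$. As this holds for every $\lambda>0$, the norm--modular relation recalled in Section 2 (namely $\|f_n\|_{r(\cdot)}\rw0$ if and only if $\rho_{r(\cdot)}(\lambda f_n)\rw 0$ for all $\lambda>0$) gives $\|\chi_{E_n}\|_{r(\cdot)}\rw 0$, as required.

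For the converse I would argue by contraposition. Suppose $\int_0^{\mu(\Omega)} a^{r^*(x)}\,dx = \infty$ for some $a>1$. Then Lemma \ref{lema-1} furnishes $\beta>0$ and a \emph{disjoint} measurable sequence $(E_n)$ with $\|\chi_{E_n}\|_{r(\cdot)}\ge\beta$ for every $n$. It remains only to observe that disjointness forces $\chi_{E_n}\rw 0$ $\mu$-a.e.: each $t\in\Omega$ lies in at most one $E_n$, so $\chi_{E_n}(t)=0$ for all but at most one index $n$, whence $\chi_{E_n}(t)\rw 0$ for every $t$. Thus $(E_n)$ is a sequence with $\chi_{E_n}\rw 0$ $\mu$-a.e. but $\|\chi_{E_n}\|_{r(\cdot)}\not\rw 0$, which is exactly the negation of the stated convergence property.

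The only point needing care---and the reason the hypothesis is imposed for all $a>1$ rather than a single value---is the unbounded-exponent case $r^+=\infty$: there the domination step must supply an integrable majorant $a^{r(\cdot)}$ for the prescribed $\lambda$, and this is exactly what ``for every $a>1$'' guarantees once the rearrangement identity converts the integral condition into integrability of $a^{r(\cdot)}$. After integrability is secured the forward direction is the routine dominated-convergence estimate, and the converse reduces to Lemma \ref{lema-1} together with the elementary disjointness observation.
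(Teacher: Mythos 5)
Your proposal is correct and follows essentially the same route as the paper: the direction from the integral condition to norm convergence is done by dominated convergence with the integrable majorant $a^{r(\cdot)}$ (the paper phrases it as a contradiction with $a=1/\beta$, you phrase it directly via the modular--norm equivalence), and the other direction is reduced to Lemma \ref{lema-1} plus the observation that a disjoint sequence automatically satisfies $\chi_{E_n}\rw 0$ $\mu$-a.e. No substantive difference.
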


\begin{proof}
The direct implication is above Lemma \ref{lema-1}. Let us show the converse.
Assume there exist a measurable  sequence  $(E_n )$   with \,$\chi_{E_n} \rw 0$ $\mu$-a.e. (thus \,$\mu(E_n)\rw 0$\,) and
\, $0 < \delta < 1$ \, such that $||\chi_{E_n}||_{r(\cdot)} \geq \delta$ for every $n$.
Taking \,$0 <\beta < \delta$, it follows from the norm definition that
$$ 1 < \int_{\Omega} (\frac{\chi_{E_n}}{\beta})^{r(t)} d\mu.
$$
Now, by the  hypotheses,
$$ \int_{\Omega} (\frac{\chi_{\Omega}}{\beta})^{r(t)} d\mu = \int_{0}^{\mu(\Omega)} (\frac{1}{\beta})^{r^*(x)} dx <\infty
$$
and, as $ \frac{\chi_{E_n}}{\beta} \rw 0 \, \mu-\text{a.e.}$, we conclude using the dominated convergence theorem that
$$ \int_{\Omega} (\frac{\chi_{E_n}}{\beta})^{r(t)} d\mu \rw 0  \,
$$  as $n \mapsto \infty$, which is a  contradiction.
\end{proof}

The above equivalence, for bounded exponents, was crucial for proving Theorem 3.4 in \cite{E-G-N} (see also  \cite{E-L-N}).
 \begin{prop} \label{prop-33} Let \,$(\Omega, \mu)$ \,be an atomless  finite measure space and exponents\,  $q(\cdot) \leq p(\cdot)$. If the inclusion \,$ \lpv \hookrightarrow \lqv $\,  is DSS, then
     $$ \lim_{x\rw \mu(\Omega)^{-}} \,(\mu(\Omega)-x)^{(\frac{p-q}{p\, q})^*(x)} =   0. $$
  \end{prop}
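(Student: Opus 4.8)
The plan is to argue by contraposition: assuming the limit in the statement does \emph{not} vanish, I will exhibit a normalized pairwise disjoint sequence in $\lpv$ on whose closed span the inclusion is an isomorphism, so that the inclusion fails to be DSS. Throughout write
\[
s(t)=\frac{p(t)-q(t)}{p(t)\,q(t)}=\frac1{q(t)}-\frac1{p(t)}\in[0,1),
\]
so that the exponent occurring in the statement is its decreasing rearrangement $s^*$. If $\lim_{x\rw\mu(\Omega)^-}(\mu(\Omega)-x)^{s^*(x)}\neq 0$, there are $c\in(0,1)$ and points $x_k\uparrow\mu(\Omega)$ with $(\mu(\Omega)-x_k)^{s^*(x_k)}\geq c$. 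Writing $\delta_k=\mu(\Omega)-x_k\to 0$, this forces $s^*(x_k)\log(1/\delta_k)\leq\log(1/c)$, hence $s^*(x_k)\rw 0$; that is, $s$ takes arbitrarily small values on sets clustering at full measure. Since $s^*$ is the decreasing rearrangement of $s$ one has $\mu\{s>s^*(x_k)\}\leq x_k$, so there is a set of measure $\geq\delta_k$ on which $s\leq s^*(x_k)$; choosing inside it nested sets $S_k$ of measure exactly $\delta_k$, passing to a subsequence with $\delta_{k+1}\leq\frac12\delta_k$, and setting $E_k=S_k\setminus S_{k+1}$, I obtain pairwise disjoint sets with $\tfrac12\delta_k\leq\mu(E_k)\leq\delta_k$ and $\sup_{E_k}s\leq s^*(x_k)$.

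Next comes the core estimate. Put $f_k=\chi_{E_k}\,\mu(E_k)^{-1/p(\cdot)}$, so that $\rho_{p(\cdot)}(f_k)=\int_{E_k}\mu(E_k)^{-1}\,d\mu=1$ and hence $\|f_k\|_{p(\cdot)}=1$. Writing $\delta=\mu(E_k)<1$ and $\sigma=\sup_{E_k}s$, a one–line modular computation gives $\|f_k\|_{q(\cdot)}\geq\delta^{\sigma}$: since $\sigma+\frac1p\geq\frac1q$ on $E_k$, one has $q(\sigma+\frac1p)\geq 1$ and therefore
\[
\rho_{q(\cdot)}\!\left(\frac{f_k}{\delta^{\sigma}}\right)=\int_{E_k}\delta^{-q(t)\left(\sigma+1/p(t)\right)}\,d\mu\ \geq\ \int_{E_k}\delta^{-1}\,d\mu=1 .
\]
Because $\mu(E_k)\geq\frac12\delta_k$, $0\leq\sigma\leq s^*(x_k)\leq 1$ and $\delta_k<1$, this yields
\[
\|f_k\|_{q(\cdot)}\ \geq\ \mu(E_k)^{\sigma}\ \geq\ \tfrac12\,\delta_k^{\,s^*(x_k)}=\tfrac12\,(\mu(\Omega)-x_k)^{s^*(x_k)}\ \geq\ \tfrac{c}{2}=:\beta>0 .
\]
Thus $(f_k)$ is a normalized pairwise disjoint sequence in $\lpv$ with $\inf_k\|f_k\|_{q(\cdot)}\geq\beta$ and $\mu(E_k)\rw 0$.

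Finally I would show that the inclusion is bounded below on $[f_k]$; the upper estimate $\|g\|_{q(\cdot)}\leq\|i\|\,\|g\|_{p(\cdot)}$ on $[f_k]$ is automatic, so only a lower estimate is needed. Refining each $E_k$ to a subset on which $p(\cdot)$ and $q(\cdot)$ oscillate arbitrarily little, while keeping enough measure that the bound $\|f_k\|_{q(\cdot)}\geq\beta'>0$ survives, I may assume $p\equiv p_k$ and $q\equiv q_k$ on $E_k$ with $q_k\leq p_k$. Computing the modulars of $\sum_k a_kf_k$ over the disjoint supports then identifies the two norms on the span with Nakano sequence norms, $\|\sum_k a_kf_k\|_{p(\cdot)}=\|(a_k)\|_{\ell_{(p_k)}}$ and $\|\sum_k a_kf_k\|_{q(\cdot)}=\|(\gamma_k a_k)\|_{\ell_{(q_k)}}$, where $\gamma_k=\|f_k\|_{q(\cdot)}\geq\beta'$. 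Since $\gamma_k\geq\beta'$ and the Nakano norm is lattice–monotone, $\|(\gamma_k a_k)\|_{\ell_{(q_k)}}\geq\beta'\,\|(a_k)\|_{\ell_{(q_k)}}$; and since $q_k\leq p_k$ one checks directly from the modulars that $\|(a_k)\|_{\ell_{(q_k)}}\geq\|(a_k)\|_{\ell_{(p_k)}}$. Combining, $\|\sum_k a_kf_k\|_{q(\cdot)}\geq\beta'\,\|\sum_k a_kf_k\|_{p(\cdot)}$, so the inclusion restricted to $[f_k]$ is an isomorphism and the inclusion is not DSS, proving the contrapositive. The main obstacle is precisely this last step: making rigorous the reduction to piecewise constant exponents and the identification of $[f_k]$ with a Nakano sequence space on which the inclusion acts as the formal inclusion; the first paragraphs are elementary rearrangement bookkeeping and the core estimate reduces to the clean modular inequality displayed above.
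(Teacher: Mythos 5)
Your first two paragraphs are sound and closely parallel the paper's own construction: the paper also extracts $x_n\nearrow\mu(\Omega)$ with $(\mu(\Omega)-x_n)^{s^*(x_n)}\geq r$, builds disjoint sets $B_n$ of measure at least $\tfrac12(\mu(\Omega)-x_n)$ on which $s=\frac{p-q}{pq}$ is bounded above by $s^*(x_n)$, and works with the normalized disjoint functions $\chi_{B_n}\mu(B_n)^{-1/p(\cdot)}$. (One small technical wrinkle: your nested sets $S_k$ must sit inside the shrinking sets $\{s\leq s^*(x_k)\}$, and since $\mu\{s\leq s^*(x_k)\}$ may greatly exceed $\delta_k$, the nesting $S_{k+1}\subset S_k$ is not automatic as written; the paper sidesteps this by taking the $B_n$ to be preimages of disjoint value-intervals of $s$, which are disjoint by construction. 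This is fixable.) Your core modular estimate $\|f_k\|_{q(\cdot)}\geq\mu(E_k)^{\sigma}\geq c/2$ is correct and is exactly the quantitative input the paper exploits.

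The genuine gap is the last step. A disjoint sequence normalized in $\lpv$ and bounded below in $\lqv$ does not by itself span a subspace on which the inclusion is an isomorphism, and your proposed remedy --- refining each $E_k$ so that $p$ and $q$ are (nearly) constant there and then comparing Nakano sequence norms --- does not go through for general exponents. The proposition imposes no regularity or boundedness on $p,q$: if $p$ is unbounded and oscillatory on $E_k$, a subset on which $p$ oscillates by at most a prescribed $\eta_k$ (small enough for the Nakano identification $\ell_{(p_k^-)}=\ell_{(p_k^+)}$ of Proposition \ref{lemaNa}) may have measure far smaller than any fixed power $\delta_k^M$, which is what your estimate $\|f_k\|_{q(\cdot)}\geq\mu(E_k')^{s^*(x_k)}$ needs in order to keep a uniform lower bound $\beta'$. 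The paper avoids any such reduction: it compares the two modulars directly on the span, using that for every $t\in B_n$ one has $\mu(B_n)^{s(t)}\geq\left(\tfrac{\mu(\Omega)-x_n}{2}\right)^{s^*(x_n)}\geq r/2$, so that after first checking $|\lambda y_n|<1$ eventually and then rescaling the coefficients by a fixed $\lambda_0\geq 2/r$, the pointwise correction factor $\bigl(\lambda_0\mu(B_n)^{s(t)}\bigr)^{-q(t)}$ is at most $1$ uniformly in $t$ (even for unbounded $q$), giving $\rho_{p(\cdot)}\bigl(\lambda\sum_{n>N}y_ns_n\bigr)\leq\rho_{q(\cdot)}\bigl(\lambda\lambda_0\sum_{n>N}y_ns_n\bigr)$ and hence the coincidence of the closed spans in $\lpv$ and $\lqv$. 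You should replace your constant-exponent reduction by this direct modular comparison; all the ingredients for it are already present in your second paragraph.
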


\begin{proof}
Since the inclusion is DSS we have \,$\mu(\{t:\,p(t)=q(t)\})=0$. First note that if \,$ess \inf (\frac{p-q}{pq})^* > 0$, then $\lim_{x\rw \mu(\Omega)^{-}} (\mu(\Omega)-x)^{(\frac{p-q}{pq})^*(x)} = 0$. Hence (since the rearrangement is decreasing), we can suppose
     $$ \lim_{x\rw \mu(\Omega)^{-}} \,(\frac{p-q}{p\, q})^*(x) = 0.
     $$

     Let us assume  that \,  $ \limsup_{x\rw \mu(\Omega)^{-}} (\mu(\Omega)-x)^{(\frac{p-q}{p\,q})^*(x)}  > 0 $. Then there exist
      $r>0$ and a scalar sequence \, $(x_n)\nearrow \mu(\Omega)$ \,  such that
\begin{equation}{\tag{$\diamond$}}
    (\mu(\Omega)-x_n)^{(\frac{p-q}{p\,q})^*(x_n)}\ge \, r
\end{equation}
     for every natural\, $n$. Furthermore  it can be assumed w.l.o.g. that  \, $\frac{x_n +\mu(\Omega)}{2} < x_{n+1} $ \, and
     $$(\frac{p-q}{p\,q})^*(\frac{x_n+\mu(\Omega)}{2})> (\frac{p-q}{p\,q})^* (x_{n+1})$$ for every  natural $n$.
     Consider the sets
     $$ A_n= \big(\,(\frac{p-q}{p\, q})^*\big)^{-1} \left(\,[(\frac{p-q}{p\, q})^*(\frac{x_n+\mu(\Omega)}{2}),(\frac{p-q}{p\, q})^*(x_n)]\right)\supseteqq \;[x_n, \frac{x_n+\mu(\Omega)}{2}].
     $$
     Thus   $(A_n)$ is a disjoint measurable sequence with Lebesgue measure 
     \begin{equation}{\tag{$\diamond\diamond$}}
         |A_n|\,\geq \,\frac{\mu(\Omega)-x_n}{2}.
     \end{equation}

     Let us define the sets
     $$ B_n=\{ t\in \Omega \,\,:\,\,  (\frac{p-q}{p\,q})^*(\frac{x_n+\mu(\Omega)}{2})\leq \frac{p-q}{p\,q}(t)\leq (\frac{p-q}{p\,q})^*(x_n)\}.
     $$
     Since the functions \,$\frac{p-q}{p\, q}$ and  $(\frac{p-q}{p\, q})^*$ are  equi-distributed we have \,$\mu(B_n)= |A_n|$. The sets $(B_n)$ are disjoint and  it can be assumed \, $0<\mu(B_n)<1$ for every $n$.
     Consider the disjoint normalized sequence \,$(s_{n})$\, in $\lpv$
     $$ s_n(t) := \frac{\chi_{B_n}}{\mu(B_n)^{\frac{1}{p(t)}}}
     $$  and the (closed) subspace \,$[s_n]_{p(\cdot)}$. Note that \,$\sum_n a_ns_n \in [s_n]_{p(\cdot)}$ \,if and only if $$\rho_{p(\cdot)}(\lambda \sum_{n>N} a_n s_n) \xrightarrow{{N\rw \infty}} 0 \quad \text{for every}\quad \lambda >0.$$

     Let us prove that $i|_{[s_n]_{p(\cdot)}}$ is an isomorphism showing that \,$(s_n)$ and $(is_n)$ are equivalent basic sequences. Since $i$ is continuous, we only need to show that $\sum y_ns_n \in [s_n]_{q(\cdot)} $ implies  $\sum y_ns_n \in [s_n]_{p(\cdot)}.$
     First notice that
     $$\rho_{p(\cdot)}\left(\sum \lambda  y_n s_n \right) = \sum \int_{B_n} |\lambda y_n|^{p(t)}\frac{\chi_{B_n}}{\mu(B_n)}d\mu=
      \sum \int_{B_n} |\lambda y_n|^{q(t)}|\lambda y_n|^{p-q(t)}\frac{\chi_{B_n}}{|A_n|^{\frac{q(t)}{p(t)}}|A_n|^{1-\frac{q(t)}{p(t)}}}d\mu.
     $$
     Also, $|\lambda y_n|<1$ up to a finite amount of terms for every $\lambda>0$. Otherwise, since for large enough $N$ we have $\rho_{q(\cdot)}(\lambda\sum_{n> N} y_ns_n)< \infty$, taking $\lambda_0 > \frac{2}{r}$  we get

     \begin{align*}
    \rho_{q(\cdot)} (\sum_{n>N} \lambda_0\lambda \,y_n s_n) \geq{}  &    \sum_{n>N} \int_{B_n} \lambda_0^{q(t)}\frac{1}{ |A_n|^{\frac{q(t)}{p(t)}}}d\mu =
          \sum_{n>N} \frac{1}{ |A_n|}\int_{B_n} \lambda_0^{q(t)}|A_n|^{1 -\frac{q(t)}{p(t)}}d\mu \\
          ={}   &   \sum_{n>N} \frac{1}{ |A_n|}\int_{B_n}\left[\lambda_0 |A_n|^{\frac{p(t)-q(t)}{q(t)p(t)}}\right]^{q(t)} d\mu
      \intertext{and using $(\diamond\diamond)$ and $(\diamond)$ we have}
      \geq{}    &   \sum_{n>N} \frac{1}{ |A_n|}\int_{B_n} \left[\lambda_0 (\frac{\mu(\Omega)-x_n}{2})^{\frac{p(t)-q(t)}{q(t)p(t)}}\right]^{q(t)}d\mu \\
      \geq{}    &   \sum_{n>N} \frac{1}{ |A_n|}\int_{B_n} \left[\lambda_0 (\frac{\mu(\Omega)-x_n}{2})^{(\frac{p(t)-q(t)}{q(t)p(t)})_{|_{B_n}}^{+}}\right]^{q(t)}d\mu \\
      \geq{}    &   \sum_{n>N} \frac{1}{ |A_n|}\int_{B_n} \left[\lambda_0 (\frac{\mu(\Omega)-x_n}{2})^{(\frac{p-q}{qp})^{*}(x_n)}\right]^{q(t)}d\mu \\
      \geq{}    &   \sum_{n>N} \frac{1}{ |A_n|}\int_{B_n} [\lambda_0 r\frac{1}{2}]^{q(t)}d\mu \,
           =\, \infty,
     \end{align*}
    which  is a contradiction.

    Now, using this fact and writing \,$\lambda= \frac{\lambda'}{\lambda_0}$, we have

 \begin{align*}
  \rho_{p(\cdot)} (\sum_{n>N} \lambda y_ns_n)&=\sum_{n>N} \int_{B_n} |\frac{\lambda'}{\lambda_0} y_n|^{q(t)}|\frac{\lambda'}{\lambda_0} y_n|^{p-q(t)}\frac{\chi_{B_n}}{|A_n|^{\frac{q(t)}{p(t)}}|A_n|^{1-\frac{q(t)}{p(t)}}}d\mu\\
    &\leq
    \sum_{n>N} \int_{B_n} |\lambda' y_n|^{q(t)}\frac{\chi_{B_n}}{|A_n|^{\frac{q(t)}{p(t)}}}\left[\frac{1}{\lambda_0(|A_n|)^{\frac{p(t)-q(t)}{p(t)q(t)}}}\right]^{q(t)}d\mu \\
     &\leq \sum_{n>N} \int_{B_n} |\lambda' y_n|^{q(t)}\frac{\chi_{B_n}}{|A_n|^{\frac{q(t)}{p(t)}}}\left[\frac{1}{\lambda_0(|A_n|)^{(\frac{p(t)-q(t)}{p(t)q(t)})^{+}_{|_{B_n}}}}\right]^{q(t)}d\mu
     \\
  &\leq \sum_{n>N}  \int_{B_n}| \lambda' y_n|^{q(t)}\frac{\chi_{B_n}}{|A_n|^{\frac{q(t)}{p(t)}}}\left[\frac{1}{\lambda_0(\frac{\mu(\Omega)-x_n}{2})^{(\frac{p-q}{pq})^*(x_n)}}\right]^{q(t)}d\mu\\
& \leq \sum_{n>N}  \int_{B_n}| \lambda' y_n|^{q(t)}\frac{\chi_{B_n}}{|A_n|^{\frac{q(t)}{p(t)}}}\left[\frac{1}{\lambda_0 r(\frac{1}{2})^{(\frac{p-q}{pq})^*(x_n)}}\right]^{q(t)}d\mu\\
   &\leq \rho_{q(\cdot)}(\sum_{n>N} \lambda' y_ns_n) \xrightarrow{N\rw \infty} 0.
  \end{align*}
  This  concludes the proof.
\end{proof}

 \begin{cor} \label{lem-3} Let \,$(\Omega, \mu)$ \,be an atomless  finite measure space and exponents\,  $q(\cdot) < p(\cdot)$ \,$\mu$-a.e. If the inclusion \,$ \lpv \hookrightarrow \lqv $\,  is DSS, then
     $$ \lim_{x\rw \mu(\Omega)^{-}} \,(\mu(\Omega)-x)^{(\frac{p-q}{p})^*(x)} =   0.$$
Moreover, if $p^{+} < \infty$,  then  $$ \lim_{x\rw \mu(\Omega)^{-}}\, (\mu(\Omega)-x)^{(p-q)^*(x)} = 0.
     $$
  \end{cor}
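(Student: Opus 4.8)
The plan is to read both limits off Proposition \ref{prop-33} by sandwiching the three rearranged exponents between one another, using only the monotonicity of the decreasing rearrangement and the elementary fact that near $\mu(\Omega)$ the base $\mu(\Omega)-x$ lies in $(0,1)$. No new construction of disjoint sequences is needed: all of the DSS information has already been encoded in the conclusion of Proposition \ref{prop-33}, namely that $\lim_{x\to\mu(\Omega)^{-}}(\mu(\Omega)-x)^{(\frac{p-q}{pq})^*(x)}=0$.

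First I would record the pointwise comparison of exponents. Since $1\le q(t)<p(t)$ $\mu$-a.e., one has $\tfrac1q\le 1$ and $\tfrac1p\le 1$, and as $p-q>0$ a.e. this yields
$$\frac{p-q}{pq}\;\le\;\frac{p-q}{p}\;\le\;p-q \qquad \mu\text{-a.e.}$$
Because the decreasing rearrangement is order preserving (if $f\le g$ then $f^*\le g^*$), the same chain passes to the rearranged functions on $(0,\mu(\Omega))$:
$$\left(\frac{p-q}{pq}\right)^{*}\;\le\;\left(\frac{p-q}{p}\right)^{*}\;\le\;(p-q)^{*}.$$
Under the additional hypothesis $p^{+}<\infty$ the function $p-q$ is essentially bounded by $p^{+}-1$, so $(p-q)^{*}$ is a genuine bounded decreasing function on all of $(0,\mu(\Omega))$ and the \emph{moreover} statement is meaningful; in the first assertion no such restriction is needed, since $\frac{p-q}{p}=1-\frac{q}{p}<1$ is automatically bounded.

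Next I would exploit that for a base $c\in(0,1)$ the map $t\mapsto c^{t}$ is decreasing. Choosing $x$ close enough to $\mu(\Omega)$ so that $L:=\mu(\Omega)-x\in(0,1)$, and raising $L$ to the three (increasing) exponents above, the order reverses:
$$0\;\le\;L^{(p-q)^{*}(x)}\;\le\;L^{\left(\frac{p-q}{p}\right)^{*}(x)}\;\le\;L^{\left(\frac{p-q}{pq}\right)^{*}(x)}.$$
Proposition \ref{prop-33} gives that the rightmost term tends to $0$ as $x\to\mu(\Omega)^{-}$, so a squeeze forces both remaining terms to $0$: the middle term is precisely the first assertion, and the leftmost term is the \emph{moreover} assertion. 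Both statements are therefore obtained simultaneously from a single two-sided monotonicity sandwich.

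I do not expect a genuine analytic obstacle here; the one point requiring care is the reversal of the inequality direction when passing from exponents to powers, which is valid only because $\mu(\Omega)-x<1$. This is harmless, since the claim is a one-sided limit at $x=\mu(\Omega)$ and one may freely restrict to $x$ in a left-neighbourhood of $\mu(\Omega)$, where the base is automatically less than $1$.
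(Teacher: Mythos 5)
Your proof is correct and follows essentially the same route as the paper: both deduce the corollary from Proposition \ref{prop-33} via the pointwise bounds $\frac{p-q}{pq}\le\frac{p-q}{p}\le p-q$ (the paper phrases the second comparison through $p^-$ and $p^+$), the monotonicity of the decreasing rearrangement, and the reversal of inequalities when raising a base in $(0,1)$ to a larger exponent. The only cosmetic inaccuracy is your remark that $p^+<\infty$ is needed for $(p-q)^*$ to be meaningful --- it is finite on $(0,\mu(\Omega))$ in any case --- but this does not affect the argument.
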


\begin{proof}
    If follows from above proposition and  when $p^{+} < \infty$ using that   \, $(\frac{p-q}{p^{+}})(\cdot)\leq (\frac{p-q}{p})(\cdot) \leq (\frac{p-q}{p^{-}})(\cdot)$.
\end{proof}

The converse of the above proposition will be proved later. We will need some  basic Lemmas:



\begin{lemma}\label{lema-a} Let  \,$(\Omega,\mu) $\,be  a
finite measure space and   $f: \Omega \rw (0,\infty)$ be a measurable function. Then the functions $1/f$ and $1/f^*$ are  equi-distributed. Hence \, $(1/f)^* \,= \,(1/f^*)^*$.
\end{lemma}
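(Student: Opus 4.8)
The plan is to reduce everything to a comparison of distribution functions. Since the decreasing rearrangement of a function is defined solely through its distribution function (if $\mu_g=\mu_h$ then $g^*=h^*$, cf. \cite{B-S}), the asserted identity $(1/f)^* = (1/f^*)^*$ is automatic once we know that $1/f$, viewed on $(\Omega,\mu)$, and $1/f^*$, viewed on $[0,\mu(\Omega))$ with Lebesgue measure, are equi-distributed. So the entire content of the lemma is this equi-distribution claim.

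First I would check that $1/f^*$ is a genuine finite function, which is where the hypothesis $f>0$ enters. Since $f>0$ everywhere, the sets $\{f>s\}$ increase to $\Omega$ as $s\downarrow 0$, so $\mu_f(s)\rw\mu(\Omega)$; consequently, for each $x<\mu(\Omega)$ one cannot have $\mu_f(s)\le x$ for all $s>0$, and therefore $f^*(x)=\inf\{s:\mu_f(s)\le x\}>0$. Thus $1/f^*$ is well defined and finite on $[0,\mu(\Omega))$ and the statement makes sense.

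Next, fixing $s>0$, I would rewrite the two distribution functions in terms of the \emph{lower} level sets of $f$ and $f^*$: $\mu_{1/f}(s)=\mu(\{t:f(t)<1/s\})$ and $\mu_{1/f^*}(s)=|\{x:f^*(x)<1/s\}|$. Hence it suffices to prove $\mu(\{f<\lambda\})=|\{f^*<\lambda\}|$ for every $\lambda>0$. The key step is to transfer the agreement of the \emph{upper} distributions (which holds because $f$ and $f^*$ are equi-distributed, i.e. $\mu_f=\mu_{f^*}$) into agreement of the lower ones. Writing $\{f\ge\lambda\}=\bigcap_{s<\lambda}\{f>s\}$ and using continuity from above of the finite measure $\mu$ gives $\mu(\{f\ge\lambda\})=\lim_{s\uparrow\lambda}\mu_f(s)$, and the same reasoning applied to $f^*$ on $[0,\mu(\Omega))$ gives $|\{f^*\ge\lambda\}|=\lim_{s\uparrow\lambda}\mu_{f^*}(s)$; since $\mu_f=\mu_{f^*}$, these two limits coincide. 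Complementing within $\Omega$ and within $[0,\mu(\Omega))$, both of total mass $\mu(\Omega)$, then yields $\mu(\{f<\lambda\})=\mu(\Omega)-\mu(\{f\ge\lambda\})=\mu(\Omega)-|\{f^*\ge\lambda\}|=|\{f^*<\lambda\}|$, which is exactly what is needed.

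The main obstacle is precisely this passage from upper to lower level sets: equi-distribution is encoded through the upper distribution function $\mu_f$, so one must handle the possible jump of $\mu_f$ at the value $\lambda$ (equivalently, a level set $\{f=\lambda\}$ of positive measure) without losing mass. The continuity-from-above argument above is what manages this cleanly, and it is here that finiteness of $\mu$ is genuinely used. Once $1/f$ and $1/f^*$ are shown to be equi-distributed, the concluding equality $(1/f)^*=(1/f^*)^*$ follows at once from the definition of the decreasing rearrangement.
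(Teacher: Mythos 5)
Your proof is correct and follows essentially the same route as the paper: pass to lower level sets, complement within the finite-measure spaces, and use that $f$ and $f^*$ are equi-distributed. The only difference is that you explicitly justify the equality $\mu(\{f\ge\lambda\})=|\{f^*\ge\lambda\}|$ via continuity from above, a step the paper simply takes for granted as a standard property of equi-distributed functions.
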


\begin{proof}
    Let $\lambda > 0$. Since 
\begin{align*}
    \mu_{1/f}(\lambda)\, ={}    &   \mu(\{t\in \Omega \,:  1/f(t) > \lambda\,\})\, \, =\, \mu(\{t\in \Omega \,\,:\,\, 1/\lambda > f(t)\,\}) \\
    ={} &   \mu(\Omega) - \mu(\{t\in \Omega :\, f(t) \geq 1/\lambda\,\})= \mu(\Omega)-|\{t\in [0,\mu(\Omega)] \,: \, f^*(t) \geq 1/\lambda\,\}| \\
    ={} &   |\,\{t\in [0,\mu(\Omega)] \,:\, f^*(t)< 1/\lambda\,\}|= |\{t\in [0,\mu(\Omega)] \,:\, 1/f^*(t) > \lambda\,\}| = |\,\,|_{1/f^*}(\lambda),
\end{align*}
the functions  \,$1/f$ and  $1/f^*$ are  equi-distributed.
\end{proof}

\begin{lemma}\label{lema-2} If $f: [0,\mu(\Omega)]  \rw (0,\infty)$ is an increasing measurable function then   \, $f^*(t)= f(\mu(\Omega) -t)$.
\end{lemma}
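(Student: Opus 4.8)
The plan is to identify the function $g(t) := f(\mu(\Omega)-t)$ with the decreasing rearrangement $f^*$ directly from the stated definition $f^*(x) = \inf\{s : \mu_f(s) \le x\}$. Two features of $g$ drive the argument. First, $g$ is non-increasing, since $f$ is non-decreasing and $t \mapsto \mu(\Omega)-t$ reverses order on $[0,\mu(\Omega)]$. Second, $g$ is equi-distributed with $f$: the reflection $t \mapsto \mu(\Omega)-t$ is a measure-preserving involution of $[0,\mu(\Omega)]$ with Lebesgue measure, so for every $s>0$ the change of variable $u=\mu(\Omega)-t$ gives $\mu_g(s) = |\{t : f(\mu(\Omega)-t) > s\}| = |\{u : f(u) > s\}| = \mu_f(s)$. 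This equi-distribution is the conceptual backbone, but to get the pointwise formula I would instead compute $f^*$ explicitly.

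First I would determine $\mu_f$ using monotonicity. Because $f$ is non-decreasing, for each $s$ the super-level set $\{t\in[0,\mu(\Omega)] : f(t) > s\}$ is an interval with right endpoint $\mu(\Omega)$, so $\mu_f(s) = \mu(\Omega) - \tau(s)$, where $\tau(s) := \inf\{t : f(t) > s\}$ is the generalized inverse of $f$. Substituting this into $f^*(x) = \inf\{s : \mu_f(s) \le x\}$ converts the condition $\mu_f(s) \le x$ into $\tau(s) \ge \mu(\Omega)-x$, so that $f^*(x) = \inf\{s : \tau(s) \ge \mu(\Omega)-x\}$.

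Next I would unwind the inequality $\tau(s) \ge y$ with $y = \mu(\Omega)-x$. By the definition of $\tau$, one has $\tau(s) \ge y$ precisely when $f(t) \le s$ for all $t<y$, that is, when $s \ge \sup_{t<y} f(t) = f(y^-)$. Hence $\{s : \tau(s) \ge y\} = [f(y^-),\infty)$ and therefore $f^*(x) = f\big((\mu(\Omega)-x)^-\big)$. Since a monotone function is left-continuous off an at most countable set, this yields $f^*(x) = f(\mu(\Omega)-x)$ at every point of left-continuity of $f$, which is the asserted identity.

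The main obstacle is exactly this endpoint and continuity bookkeeping: the infimum definition makes $f^*$ right-continuous, whereas $f(\mu(\Omega)-\cdot)$ is right-continuous only where $f$ is left-continuous, so the two functions can disagree on the (countable) set of discontinuities of $f$ and at the endpoint $x=0$. I expect the cleanest way to dispose of this is to record that the equality holds off a null set, which is all that is used later, and to note that the direct computation above certifies $g = f^*$ at every point of continuity of $f$; the equi-distribution observation then confirms that $g$ is genuinely a non-increasing rearrangement of $f$, so no information is lost in passing to the chosen representative.
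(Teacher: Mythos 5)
Your proof is correct and follows essentially the same route as the paper's: both rest on the identity $\mu_f(\lambda)=\mu(\Omega)-\tau(\lambda)$ for the generalized inverse $\tau(\lambda)=\inf\{t: f(t)>\lambda\}$, from which the reflected function $f(\mu(\Omega)-\cdot)$ is seen to be decreasing and equi-distributed with $f$, hence equal to $f^*$. You are somewhat more careful than the paper in flagging that the pointwise identity can fail on the (at most countable, hence null) set of discontinuities of $f$, a caveat the paper leaves implicit in its a.e.\ use of the lemma.
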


\begin{proof}
Let $\lambda > 0$ and consider $s_{\lambda}: = \inf\{ t\in [0,\mu(\Omega)] \,:\, f(t) > \lambda \,\}$. Since  $f$  is increasing we have \,
$ \mu_{f} (\lambda) = \mu(\Omega) -s_{\lambda}
$. On the other hand, the function \,$h(t) := f(\mu(\Omega) -t)$ is decreasing and
$$ \{ t \in [0,\mu(\Omega)] \,\,:\,\, h(t) > \lambda\,\} = (0\,,\mu(\Omega)-s_{\lambda}).$$
\end{proof}

\begin{lemma} \label{prop-1} Let \,$f:[0,b) \rw (0,\infty) $ \, be  a decreasing measurable function with \,$f(0)>0$ \,  and \, $\lim_{x\rw b^{-}}\,f(x)= \,0$.
If \,\,$\lim_{x\rw b^{-}}\, (b-x)^{f(x)} = 0 $, then,  for every \, $a>1$,
$$ \int_{0}^{b} a^{\frac{1}{f(x)}}\,dx\, < \,\infty.
$$
\end{lemma}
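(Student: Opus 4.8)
The plan is to reformulate the hypothesis in logarithmic form and then, near the endpoint $x=b$, dominate the integrand $a^{1/f(x)}$ by an integrable power of $(b-x)$. Writing $(b-x)^{f(x)} = e^{\,f(x)\ln(b-x)}$ and using that $\exp$ is continuous, the assumption $\lim_{x\rw b^{-}}(b-x)^{f(x)}=0$ is equivalent to $\lim_{x\rw b^{-}} f(x)\ln(b-x) = -\infty$, that is, $f(x)\ln\frac{1}{b-x}\rw +\infty$ as $x\rw b^{-}$. Fixing $a>1$, I would choose a constant $M>\ln a$; by the reformulated hypothesis there is $\delta\in(0,b)$ such that $f(x)\ln\frac{1}{b-x}>M$ for every $x\in(b-\delta,b)$, whence $\frac{1}{f(x)}<\frac{1}{M}\ln\frac{1}{b-x}$ on this range.

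Since $a>1$, the function $y\mapsto a^{y}$ is increasing, so on $(b-\delta,b)$ this gives
$$ a^{1/f(x)} < a^{\frac{1}{M}\ln\frac{1}{b-x}} = (b-x)^{-\frac{\ln a}{M}}. $$
Setting $\gamma := \frac{\ln a}{M}$, the choice $M>\ln a$ forces $\gamma<1$, so the dominating function is integrable near $b$:
$$ \int_{b-\delta}^{b} a^{1/f(x)}\,dx \;\leq\; \int_{b-\delta}^{b}(b-x)^{-\gamma}\,dx \;=\; \frac{\delta^{\,1-\gamma}}{1-\gamma}\;<\;\infty. $$

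It then remains to control $[0,b-\delta]$, where the integrand is simply bounded: since $f$ is decreasing and strictly positive, $f(x)\geq f(b-\delta)>0$ for all $x\leq b-\delta$, hence $a^{1/f(x)}\leq a^{1/f(b-\delta)}$ and $\int_{0}^{b-\delta} a^{1/f(x)}\,dx \leq (b-\delta)\,a^{1/f(b-\delta)}<\infty$. Adding the two estimates yields $\int_{0}^{b} a^{1/f(x)}\,dx<\infty$, as required. The only delicate point — and the crux of the argument — is the calibration $M>\ln a$: this is exactly what pushes the exponent $\gamma=\frac{\ln a}{M}$ below the integrability threshold $1$, and since $a$ is fixed while $M$ may be taken arbitrarily large, such a choice is always available. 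The monotonicity of $f$ is used only to guarantee the lower bound $f(x)\geq f(b-\delta)$ on the tame region $[0,b-\delta]$.
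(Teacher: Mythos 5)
Your proof is correct and follows essentially the same route as the paper: both rewrite the hypothesis as $f(x)\ln\frac{1}{b-x}\to+\infty$, split the integral at a point $b-\delta$, bound $a^{1/f(x)}$ near $b$ by a power $(b-x)^{-\gamma}$ with $\gamma<1$ (the paper fixes $\gamma=1/2$ via the choice $a<e^N$ and threshold $2N$, you keep $\gamma=\ln a/M<1$), and use the monotonicity of $f$ to bound the integrand by a constant on $[0,b-\delta]$. The differences are only in the choice of constants.
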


\begin{proof}
Given $a>1$, consider a natural  $N $ such that \,$a< e^N$. Let  us see  $\int_{0}^{b} \,e^{N/f(x)}\,dx < \infty$.

From the hypothesis it follows that \,  $\lim_{x\rw b^{-}}-f(x) \ln (b-x) = \infty$\,. Hence there exists $0<\delta_N <1$ such that \,
$ -f(x) \ln(b-x) \geq 2N $ \,for every $x\in (b-\delta_N, b)$, i.e. $\, \frac{1}{2N} \geq \frac{1}{-f(x) \ln(b-x)}$. Thus
$$ \int_{0}^{b}\, a^{1/f(x)}dx \leq \, \int_{0}^{b- \delta_N} a^{1/f(x)}dx + \int_{b- \delta_N}^{b} (e^N)^{1/f(x)}dx \, <\, \infty,$$  since

\begin{align*}
    \int_{b- \delta_N}^{b} (e^N)^{1/f(x)}dx ={} &    \int_{b- \delta_N}^{b} \left(e^N \right)^{\frac{-\ln(b-x)}{-f(x)\ln(b-x)}} dx \leq \int_{b- \delta_N}^{b} \left(e \right)^{-\ln(b-x)^N(\frac{1}{2N})} dx \\
    ={}  &   \int_{b- \delta_N}^{b}\left(\frac{1}{(b-x)^N}\right)^{\frac{1}{2N}}dx = \int_{b- \delta_N}^{b}\frac{1}{\sqrt{b-x}}dx \,\,< \infty.
\end{align*}
\end{proof}


\section{The finite measure case}
In this section we give suitable criteria in terms of the exponents for the inclusions between  variable exponent Lebesgue spaces  over finite measure spaces be DSS. First we consider the case of inclusions  $\lpv  \hookrightarrow \lqv$ when  $q(\cdot)$ is a bounded exponent. After that, we will do the general case.

In particular we get  the equivalence of the $L$-weak compactness and the DSS property  for inclusions between  variable Lebesgue spaces (recall that this equivalence does not happen  in  Orlicz spaces, see  Section 2).


\begin{thm} \label{Teor-2} Let $(\Omega,\mu) $ be an atomless  finite measure space and
 exponents \,$q(\cdot) < p(\cdot)$ $\mu$-a.e. with $q^+ < \infty$. Denote $i$ the inclusion $i:\lpv \hookrightarrow \lqv$. TFAE:
\begin{enumerate}
\item $\int_0^{\mu(\Omega)} a^{(\frac{p}{p-q})^*(x)}dx < \infty$ \,\,\, for every $a>1$.
\item $i$ is $L$-weakly compact.
\item $i$ is $M$-weakly compact.
\item $i$ is $DSS$.
\item The restriction of the inclusion $i$ on  any subspace spanned by a  disjoint sequence \,$(\frac{\chi_{E_n}}{\mu(E_n)^{\frac{1}{p(t)}}})$\, is not an isomorphism.
\item $\lim_{x\rw \mu(\Omega)^{-}} \, \,(\mu(\Omega)-x)^{(\frac{p-q}{p})^*(x)}= 0$.
\end{enumerate}
\end{thm}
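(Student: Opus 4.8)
The plan is to prove the cyclic chain of implications
$$(1)\Rightarrow(2)\Rightarrow(3)\Rightarrow(4)\Rightarrow(5)\Rightarrow(6)\Rightarrow(1),$$
exploiting that four of the six links are either soft Banach-lattice facts or immediate consequences of results already established in Sections 2 and 3, so that the genuine work sits in the two analytic links $(1)\Rightarrow(2)$ and $(6)\Rightarrow(1)$, together with the recycling of the construction of Proposition \ref{prop-33} for $(5)\Rightarrow(6)$.

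For $(1)\Rightarrow(2)$ I would first observe that, because $q^+<\infty$, the pointwise inequalities $\frac{p}{p-q}\le \frac{pq}{p-q}\le q^+\,\frac{p}{p-q}$ together with the monotonicity and scaling behaviour of the decreasing rearrangement make the finiteness of $\int_0^{\mu(\Omega)} a^{(\frac{p}{p-q})^*(x)}dx$ for every $a>1$ equivalent to the finiteness of $\int_0^{\mu(\Omega)} a^{(\frac{pq}{p-q})^*(x)}dx$ for every $a>1$. By Proposition \ref{Teo-1} applied to the exponent $s(\cdot)=\frac{pq}{p-q}$, the latter is in turn equivalent to $\|\chi_{A_n}\|_{s(\cdot)}\to0$ for every sequence $(A_n)$ with $\chi_{A_n}\to0$ $\mu$-a.e. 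Finally, since $\frac1q=\frac1p+\frac1s$, a generalized H\"older inequality yields $\sup_{\|f\|_{p(\cdot)}\le1}\|f\chi_{A_n}\|_{q(\cdot)}\le K\|\chi_{A_n}\|_{s(\cdot)}\to0$, which is precisely the $L$-weak compactness of $i$.

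The links $(2)\Rightarrow(3)\Rightarrow(4)\Rightarrow(5)$ I would dispatch quickly. For $(2)\Rightarrow(3)$, a norm-bounded disjoint sequence $(f_n)$ has essentially disjoint supports $A_n$, so $\chi_{A_n}\to0$ $\mu$-a.e. and $\|f_n\|_{q(\cdot)}=\|f_n\chi_{A_n}\|_{q(\cdot)}\to0$ by $L$-weak compactness, giving $M$-weak compactness; $(3)\Rightarrow(4)$ is recorded in the Preliminaries; and $(4)\Rightarrow(5)$ is immediate once one checks $\rho_{p(\cdot)}\big(\chi_{E_n}/\mu(E_n)^{1/p(t)}\big)=1$, whence $\|\chi_{E_n}/\mu(E_n)^{1/p(t)}\|_{p(\cdot)}=1$, so these distinguished sequences are legitimate normalized disjoint sequences to which the DSS hypothesis applies. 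For $(5)\Rightarrow(6)$ I would reuse the construction in the proof of Proposition \ref{prop-33}: that argument shows that failure of $\lim_{x\to\mu(\Omega)^-}(\mu(\Omega)-x)^{(\frac{p-q}{p\,q})^*(x)}=0$ produces a disjoint sequence of the distinguished form $\chi_{B_n}/\mu(B_n)^{1/p(t)}$ on which $i$ is an isomorphism, contradicting (5); hence (5) forces the $(\frac{p-q}{p\,q})^*$ limit, and since $\frac{p-q}{p\,q}\le\frac{p-q}{p}$ gives $(\mu(\Omega)-x)^{(\frac{p-q}{p})^*(x)}\le(\mu(\Omega)-x)^{(\frac{p-q}{p\,q})^*(x)}$ near the right endpoint (the base lying in $(0,1)$), this is exactly the passage recorded in Corollary \ref{lem-3} and yields (6).

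Finally $(6)\Rightarrow(1)$ splits into two cases. If $ess\inf\frac{p-q}{p}>0$ then $\frac{p}{p-q}$ is bounded and (1) holds trivially; otherwise $(\frac{p-q}{p})^*$ decreases to $0$ at $\mu(\Omega)$, so Lemma \ref{prop-1} (with $f=(\frac{p-q}{p})^*$ and $b=\mu(\Omega)$) turns (6) into $\int_0^{\mu(\Omega)} a^{1/(\frac{p-q}{p})^*(x)}dx<\infty$ for every $a>1$. Combining Lemma \ref{lema-a} and Lemma \ref{lema-2} gives $(\frac{p}{p-q})^*(x)=1/(\frac{p-q}{p})^*(\mu(\Omega)-x)$, and the substitution $y=\mu(\Omega)-x$ converts the last integral into exactly the one in (1). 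I expect the main obstacle to be the careful bookkeeping across the two analytic links $(1)\Rightarrow(2)$ and $(6)\Rightarrow(1)$: keeping track of the four related exponents $\frac{p}{p-q}$, $\frac{pq}{p-q}$, $\frac{p-q}{p}$ and $\frac{p-q}{p\,q}$, and invoking the reciprocal and rearrangement identities so that the hypothesis $q^+<\infty$ is used exactly where the scaling factor $q^+$ must be absorbed and nowhere else.
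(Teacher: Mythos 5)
Your proposal is correct and follows the paper's proof almost exactly: the same cyclic chain $(1)\Rightarrow(2)\Rightarrow\cdots\Rightarrow(6)\Rightarrow(1)$, with $(3)\Rightarrow(4)\Rightarrow(5)$ as soft facts, $(5)\Rightarrow(6)$ by recycling the construction of Proposition \ref{prop-33} together with Corollary \ref{lem-3}, and $(6)\Rightarrow(1)$ via Lemmas \ref{lema-a}, \ref{lema-2} and \ref{prop-1} (your explicit case split when $\mathrm{ess}\inf\frac{p-q}{p}>0$ is a small point the paper leaves implicit, and it is needed since Lemma \ref{prop-1} assumes $f(x)\to 0$). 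The only real divergence is in $(1)\Rightarrow(2)$: the paper argues at the modular level, using $q^+<\infty$ to get $\delta^{q^+}<\rho_{q(\cdot)}(f_k\chi_{A_{n_k}})$ and then the two-function H\"older inequality for the conjugate pair $r=\frac{p}{q}$, $r'=\frac{p}{p-q}$, so that Proposition \ref{Teo-1} is applied directly to the exponent $\frac{p}{p-q}$ appearing in $(1)$; you instead use the three-exponent H\"older inequality $\|f\chi_A\|_{q(\cdot)}\leq K\|f\|_{p(\cdot)}\|\chi_A\|_{\frac{pq}{p-q}(\cdot)}$ and use $q^+<\infty$ to show that condition $(1)$ for $\frac{p}{p-q}$ is equivalent to the same condition for $\frac{pq}{p-q}$ (via $\frac{p}{p-q}\leq\frac{pq}{p-q}\leq q^+\frac{p}{p-q}$ and monotonicity of the rearrangement). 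Both are valid; your variant is in fact the argument the paper uses later for the unbounded-exponent version (Theorem \ref{Teor-2b}), so it buys a more uniform treatment of the two theorems, at the cost of the extra (easy) comparison of the two integral conditions.
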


\begin{proof}
$(1) \Rightarrow (2)$ It is similar to Lemma 3.3 in \cite{E-G-N}. Suppose that $(1)$ is true and $(2)$ is not. Hence
$$ \lim_{n\rw \infty} \sup_{f\in B_{\lpv}} \{||f\chi_{A_n}||_{q(\cdot)}\} \neq 0
 $$ for certain sequence $(A_n)$ in  $\Omega$\, such that $\chi_{A_n}\rw 0$ $\mu$-a.e.. Then there exist \, $0< \delta <1$, a sequence  $(f_k)_k\subset B_{\lpv}$ and a subsequence  $(A_{n_k})_k$ such that
 $$ || f_k\chi_{A_{n_k}}||_{q(\cdot)} \, > \, \delta
 $$ for every natural  $k$. This implies
     $$1 < \|\frac{f_{k}\chi_{A_{n_{k}}}}{\delta}\|_{q(\cdot)} \leq \rho_{q(\cdot)}(\frac{f_{k}\chi_{A_{n_{k}}}}{\delta})\, \leq \, \frac{1}{\delta^{q^{+}}}\, \rho_{q(\cdot)}(f_{k}\chi_{A_{n_{k}}}).$$
 Hence,
 $$   \delta^{q^+}   \, <   \, \rho_{q(\cdot)}( f_k\chi_{A_{n_k}}) = \int_{\Omega} | f_k \chi_{A_{n_k}}(t)|^{q(t)} d\mu
 $$
 for every $k$. Now considering the exponent\,  $r(t) := \frac{p(t)}{q(t)}>1$\, $\mu$-a.e. and its conjugate \,$r'(t) = \frac{p(t)}{p(t)-q(t)}$ we have, by H\"older inequality, that
 $$  \rho_{q(\cdot)}( f_k\chi_{A_{n_k}}) \leq \, K \, ||\, \chi_{An_k}||_{r'(\cdot)} \, \, ||  f_k^{q(\cdot)}||_{r(\cdot)}.$$
 And, since \, $||\,f_k\,||_{p(\cdot)} \leq 1$, we have \, $||\,f_k^{q(\cdot)}\,||_{r(\cdot)} \leq 1$. Indeed, if $\rho_{p(\cdot)}(\frac{f}{\lambda}) \leq 1$ for every $\lambda >1$, then
 $$ \rho_{r(\cdot)}\left(\frac{f^q}{\lambda}\right) \leq \int_{\Omega} \frac{|f(t)|^{p(t)}}{(\lambda^{1/q^+})^{p(t)}} d\mu \leq 1.
 $$
 Thus,
 $$ \delta^{q^+}   \leq  \,K \, ||\chi_{An_k}||_{r'(\cdot)}.$$
 But by hypotheses $(1)$ and Proposition \ref{Teo-1} we get \,$||\chi_{An_k}||_{r'(\cdot)} \rw 0$, which is a contradiction.

$(2)\Rightarrow (3)$ Let  \,$(f_{n})$\, be a pairwise disjoint normalized sequence in  $\lpv$. As \,$\mu(\Omega) < \infty$\, we have \,$\mu(supp(f_{n}))\rightarrow 0$ as $n\rightarrow \infty$, hence
      $$\lim_{n\rightarrow \infty} \|f_{n}\|_{q(\cdot)} \leq \lim_{n\mapsto \infty} \sup_{k} \,\|f_{k}\chi_{supp(f_{n})}\|_{q(\cdot)} \,\leq \,\lim_{\mu(A)\rightarrow 0} \sup_{k} \, \|f_{k} \chi_{A} \|_{q(\cdot)} =  0.  $$

 It is clear  that $(3) \Rightarrow (4) \Rightarrow (5)$.

    $(5) \Rightarrow (6)$ It is  the proof of Proposition \ref{prop-33} (and Corollary \ref{lem-3})

    $(6) \Rightarrow   (1)$  Consider  the function \,$f(t) = (\frac{p-q}{p})(t)$. From Lemma \ref{lema-a}, we have that the functions \,  $\frac{1}{(\frac{p-q}{p})^*(x)} $ \, and  \, $\frac{1}{(\frac{p-q}{p})(t)}=(\frac{p}{p-q})(t) $ \,are equi-distributed. Thus \, $(\frac{p}{p-q})^* = (1/f^*)^*$\, and
$$
\int_{0}^{\mu(\Omega)} a^{\left(\frac{p}{p-q}\right)^*(x)}dx  = \int_{0}^{\mu(\Omega)} a^{\left(\frac{1}{(\frac{p-q}{p})^*}\right)^*(x)}dx,
$$
 Now, as $1/f^*$ is an increasing function,  it follows by  Lemma \ref{lema-2} that  \,$(1/f)^*(x)= (1/f^*)^*(x) = (1/f^*)(\mu(\Omega)-x)$\, so we have
$$
 = \,\int_{0}^{\mu(\Omega)} a^{\frac{1}{(\frac{p-q}{p})^*}(\mu(\Omega)-x)}dx  \,=\,\int_{0}^{\mu(\Omega)} a^{\frac{1}{(\frac{p-q}{p})^*(y)}}dy.$$

Finally the  boundedness of this integral follows  from  Lemma  \ref{prop-1} for  $\mu(\Omega) = b $,  since \, $(\frac{p-q}{p })^*(\cdot)$ is decreasing and
the hypothesis.
\end{proof}

 \, It is clear that under the hypotheses of \;$ ess\inf (p-q) > 0$ \, and \,$p^+<\infty$ we have $$\lim_{x\rw \mu(\Omega)^{-}}(\mu(\Omega)-x)^{(\frac{p-q}{p})^*(x)}= 0,$$
 so the inclusion $\lpv \hookrightarrow \lqv$ is DSS (theses hypotheses are not necessary conditions, see Example \ref{examples}).

For variable Lebesgue spaces on  finite measures \,$\lpv$\, we have the canonical extreme inclusions \,$L^{\infty}(\mu)\hookrightarrow \lpv \hookrightarrow L^{1}(\mu)$\, (cf. \cite{Libro,Libro2}).

If follows from the above a DSS criterion for  the  right  extreme  case of \,$ L^{1}(\mu) \equiv \lqv$  (recall that  weakly compact subsets in \,$L^{1}(\mu)$\, are the same as equi-integrable sets by Dunford-Pettis Theorem, cf. \cite{A-K}):

\begin{cor}\label{Cor3}

Let $(\Omega,\mu)$ be an atomless finite measure space and an exponent  $p(\cdot)$. TFAE:
\begin{enumerate}
\item \,$\int_0^{\mu(\Omega)} a^{(\frac{p}{p-1})^{*}(x)} \,dx < \infty$ \, \, for every $a>1$.
\item The inclusion \, $\lpv \hookrightarrow L^{1}(\mu)$ \, is weakly compact.
\item The inclusion \, $\lpv \hookrightarrow L^{1}(\mu)$ \, is DSS.
\item $\lim_{x\rw \mu(\Omega)^{-}} (\mu(\Omega)-x)^{(\frac{p-1}{p})^*(x)} = 0$.
\end{enumerate}
\end{cor}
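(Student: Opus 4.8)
The plan is to deduce this corollary from Theorem \ref{Teor-2} by specializing to the constant exponent $q(\cdot)\equiv 1$, for which $L^{q(\cdot)}(\mu)=L^{1}(\mu)$ and $q^{+}=1<\infty$, so that the theorem applies. First I would dispose of the degenerate case $\mu\{t:p(t)=1\}>0$. On that set $\lpv$ coincides isometrically with $L^{1}$ and the inclusion restricts to the identity on an infinite-dimensional band spanned by disjoint functions supported there, so $i$ is not DSS; simultaneously $(\frac{p-1}{p})^{*}$ vanishes on an interval ending at $\mu(\Omega)$ (forcing the limit in (4) to equal $1$) and $\frac{p}{p-1}=+\infty$ on a set of positive measure (forcing $(\frac{p}{p-1})^{*}=+\infty$ on an initial interval and hence the integral in (1) to diverge), so all four statements fail together. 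Thus one may assume $p(\cdot)>1$ $\mu$-a.e., which is exactly the hypothesis $q(\cdot)<p(\cdot)$ $\mu$-a.e. of Theorem \ref{Teor-2}.

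With this reduction I would match the statements directly against Theorem \ref{Teor-2} taken with $q\equiv 1$: condition (1) here is verbatim condition (1) of the theorem; condition (3) (DSS) is its condition (4); and condition (4) is its condition (6), upon noting $\frac{p-q}{p}=\frac{p-1}{p}$. All three of these equivalences are then immediate.

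The only point requiring a genuine argument is the identification of condition (2). Here I would invoke the Dunford--Pettis theorem (\cite{A-K}): a norm-bounded subset of $L^{1}(\mu)$ is relatively weakly compact if and only if it is equi-integrable. Since the inclusion $i$ is bounded, the image $i(B_{\lpv})$ is norm-bounded in $L^{1}(\mu)$; therefore $i$ is weakly compact, i.e. $i(B_{\lpv})$ is relatively weakly compact, precisely when $i(B_{\lpv})$ is equi-integrable, which is exactly the definition of $i$ being $L$-weakly compact, namely condition (2) of Theorem \ref{Teor-2}. Chaining this equivalence with the theorem's equivalences yields the full cycle and completes the proof.

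I expect no serious obstacle here: the substance is the single observation that, because the target space is $L^{1}(\mu)$, weak compactness and $L$-weak compactness coalesce via Dunford--Pettis, together with the routine bookkeeping that the four displayed conditions are precisely the $q\equiv 1$ instances of the conditions of Theorem \ref{Teor-2}.
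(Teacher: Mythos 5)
Your proposal is correct and follows essentially the same route as the paper: the corollary is obtained by specializing Theorem \ref{Teor-2} to the constant exponent $q\equiv 1$, with condition (2) identified with $L$-weak compactness via the Dunford--Pettis characterization of relatively weakly compact subsets of $L^{1}(\mu)$ as equi-integrable sets, exactly as the paper indicates in the sentence preceding the corollary. Your additional treatment of the degenerate case $\mu\{p=1\}>0$ is a harmless (and correct) supplement to the paper's standing assumption that $\mu\{p=q\}=0$.
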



\begin{rem}
Notice that the above inclusions  $i:\lpv \hookrightarrow L^1(\mu)$, are not strictly singular.
\end{rem} 

Indeed, in  the constant exponent case $p_{-}= p_{+}$ it is well-known (Khintchine inequality, cf. \cite{LT1} p.66). Assume now  \, $p_{-}\leq r < p_{+} \leq \infty$ and  consider the set \,$\Omega_{r} = \{t\in \Omega : p(t)<r  \}$, which have  \,$\mu(\Omega_{r})> 0$.  The  restricted variable Lebesgue space \,$L^{p(\cdot)}(\Omega_{r}) $\, can be  canonically identified with a closed band-subspace of \,$\lpv$. Now as \,$p|_{\Omega_{r}}^{+} \leq r < \infty$\, it follows easily from Khintchine inequalities in $L^{p}$-spaces  (cf. \cite{A,LT1})\, that the Rademacher function system \,$(r_{n})$\,  in \, $L^{p(\cdot)}(\Omega_{r}) $\, and  in $L^{1}(\mu)$\, are  equivalent  to the canonical basis of \,$\ell_{2}$. Hence the inclusion  \,  $\lpv \hookrightarrow  L^{1}(\mu) $\, is  not strictly singular.

A similar argument shows also  that all the inclusions \,$\lpv \hookrightarrow \lqv$\, (for  $ q(\cdot) \leq p(\cdot)$) are not strictly singular.


We consider now  the left extreme inclusions  \,$ L^{\infty}(\mu)\hookrightarrow \lpv$. In this case the  DSS property is  equivalent to the strict singularity:

\begin{thm} \label{Teor-3} Let $(\Omega,\mu)$ be an atomless  finite measure space and an exponent \,$p(t)$. Denote $i$ the inclusion $i: L^{\infty}( \mu) \hookrightarrow \lpv$. TFAE:
\begin{enumerate}
  \item $\int_0^{\mu(\Omega)} a^{p^*(x)} dx < \infty$ \,\,\, for every $a>1$.
  \item   Every  measurable set sequence $(E_n)_n$   with $ \chi_{E_n} \rw 0$ a.e. satisfies  $||\chi_{E_n}||_{p(\cdot)} \rw 0$.
  \item $i$ is $L$-weakly compact.
  \item $i$ is $M$-weakly compact.
  \item $i$ is weakly compact.
  \item $i$ is strictly singular.
  \item $i$ is DSS.
\end{enumerate}
\end{thm}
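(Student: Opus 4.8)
The plan is to run a single strongly connected cycle of implications threading all seven statements. First, $(1)\Leftrightarrow(2)$ is exactly Proposition~\ref{Teo-1} applied with $r=p$. For $(2)\Rightarrow(3)$ I would observe that $B_{\lin}$ consists of the functions with $|f|\le 1$ $\mu$-a.e., so by monotonicity of the Luxemburg norm $\|f\chi_{A_n}\|_{p(\cdot)}\le\|\chi_{A_n}\|_{p(\cdot)}$, and in fact $\sup_{f\in B_{\lin}}\|f\chi_{A_n}\|_{p(\cdot)}=\|\chi_{A_n}\|_{p(\cdot)}$ (attained at $f=\chi_{A_n}$); since $\chi_{A_n}\rw 0$ a.e. forces $\|\chi_{A_n}\|_{p(\cdot)}\rw 0$ by $(2)$, the inclusion is $L$-weakly compact. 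For $(3)\Rightarrow(4)$, given a bounded disjoint sequence $(f_n)\subset\lin$ with $\|f_n\|_\infty\le C$, I set $A_n=\operatorname{supp}f_n$; these sets are pairwise disjoint, hence $\chi_{A_n}\rw 0$ everywhere, and evaluating the $L$-weak compactness supremum at $f_n/C\in B_{\lin}$ gives $\|f_n\|_{p(\cdot)}/C\le\sup_{f\in B_{\lin}}\|f\chi_{A_n}\|_{p(\cdot)}\rw 0$, so $i$ is $M$-weakly compact. The implications $(4)\Rightarrow(7)$ and $(3)\Rightarrow(5)$ are the general principles (recalled in Section~2 and standard in Banach lattice theory) that $M$-weakly compact operators are DSS and that an $L$-weakly compact operator into a Banach lattice is weakly compact.

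For $(5)\Rightarrow(6)$ I would exploit that $\lin$ on a finite measure space is an AM-space with unit, hence isometric to a $C(K)$ space and in particular has the Dunford--Pettis property. Thus the weakly compact inclusion $i$ is completely continuous. If $i$ failed to be strictly singular, there would be an infinite-dimensional subspace $X_0\subset\lin$ on which $i$ is an isomorphism; then $i(X_0)\cong X_0$ would be reflexive (a bounded-below weakly compact operator has reflexive range), so $X_0$ would carry a normalized weakly null sequence $(x_n)$, and complete continuity would give $\|ix_n\|_{p(\cdot)}\rw 0$, contradicting $\|ix_n\|_{p(\cdot)}\ge c>0$. Hence $i$ is strictly singular. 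The implication $(6)\Rightarrow(7)$ is the general fact that strictly singular operators are DSS.

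It remains to close the cycle with $(7)\Rightarrow(1)$, which I would prove by contraposition and which I expect to be the main structural step. If $(1)$ fails, Lemma~\ref{lema-1} furnishes $\beta>0$ and a disjoint measurable sequence $(E_n)$ with $\|\chi_{E_n}\|_{p(\cdot)}\ge\beta$. Since the $E_n$ are disjoint, $\|\sum_n a_n\chi_{E_n}\|_\infty=\sup_n|a_n|$, so $(\chi_{E_n})$ is a normalized disjoint sequence in $\lin$ spanning an isometric copy of $c_0$. On this span, monotonicity of the lattice norm gives $\|\sum_n a_n\chi_{E_n}\|_{p(\cdot)}\ge|a_{n_0}|\,\|\chi_{E_{n_0}}\|_{p(\cdot)}\ge\beta\sup_n|a_n|$, where $n_0$ attains the supremum, while boundedness of $i$ yields the reverse inequality up to a constant; hence $i$ restricted to $[\chi_{E_n}]$ is an isomorphism and $i$ is not DSS. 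This establishes $(7)\Rightarrow(1)$ and closes all the equivalences, since every statement can now reach $(1)$ through $(7)$, and $(1)$ reaches every statement by the forward arrows. The only point demanding genuine care is $(5)\Rightarrow(6)$, where strict singularity is not formal and must be extracted from the Dunford--Pettis property of $\lin$; everything else reduces to monotonicity of the Luxemburg norm, the finite-measure disjointness argument, and the cited general principles.
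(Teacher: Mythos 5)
Your proposal is correct and follows essentially the same route as the paper: $(1)\Leftrightarrow(2)$ via Proposition \ref{Teo-1}, $(2)\Rightarrow(3)$ by monotonicity of the Luxemburg norm, the Dunford--Pettis property of $\lin$ for passing from weak compactness to strict singularity, and Lemma \ref{lema-1} with the $c_0$-type equivalence of the basic sequences $(\chi_{E_n})$ for $(7)\Rightarrow(1)$. The only (harmless) divergence is in the routing of statement $(5)$: the paper proves $(4)\Leftrightarrow(5)$ using that $\lin$ is an AM-space, while you derive $(5)$ from $(3)$ via the standard fact that $L$-weakly compact operators are weakly compact, and you expand the Dunford--Pettis argument that the paper simply cites.
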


\begin{proof}
$(1)\Leftrightarrow (2)$. It is Proposition 3.2.

\noindent \textbf{$(2) \Rightarrow (3).$}  If $f\in \lin$, as $||f||_{p(\cdot)} \leq \left\lVert (||f||_{\infty})\chi_{\Omega}\right\rVert_{p(\cdot)} = ||f||_{\infty} || \chi_{\Omega}||_{p(\cdot)}$, we have
$$ \lim_{\mu(A)\rw 0} \sup \{||f\chi_A||_{p(\cdot)} \,\,:\,\,||f||_{\infty}\leq 1\,\} \leq \lim_{\mu(A)\rw 0} ||\chi_A||_{p(\cdot)} = 0.
$$

\noindent \textbf{$(3) \Rightarrow (4).$} It is clear.

\noindent  \textbf{$(4) \Leftrightarrow (5)$}. Since $\lin$ is an $AM$-space (cf. \cite{A-B} Thm. 18.11),

\noindent  \textbf{$(5) \Rightarrow (6).$} Since  $\lin$ has the Dunford-Pettis property and the inclusion $i$  is weakly compact (cf. \cite{Go} Thm. 3.3.5).

\noindent   \textbf{$(6) \Rightarrow (7).$} It is obvious.

\noindent \textbf{$(7) \Rightarrow (1).$} Assume that there exists \, $a>1$\, such that $\int_0^{\mu(\Omega)} a^{p^*(x)} dx = \infty$. Then, by  Lemma  \ref{lema-1}, there exists \, $\beta > 0$\,  and a sequence of disjoint measurable sets \, $(E_n)_n$\,  verifying \,  $||\chi_{E_n}||_{p(\cdot)}\geq \beta$ \,for every $n$. Now  the basic sequence $(\chi_{E_n})_n$ in $L^{\infty}( \mu)$ and in $\lpv$ are equivalent. Indeed, since the inclusion \,$i : L^{\infty}( \mu) \hookrightarrow \lpv$\, is continuous, we have  \, $||\sum_{n=1}^{\infty}a_n\chi_{E_n}||_{p(\cdot)} \, \leq C ||\sum_{n=1}^{\infty}a_n\chi_{E_n}||_{\infty} $ \, for some $C>0$. And for every natural $k$ we have
    $$
    ||\sum_{n=1}^{k}a_n\chi_{E_n}||_{\infty}= \max_{1\leq n\leq k} |a_n| \,\leq \,\,   \frac{1}{\beta} \,\, || \sum_{n=1}^{k}a_n\chi_{E_n}||_{p(\cdot)}.$$
This contradicts the DSS property.
\end{proof}

A direct consequence, by factorization, is the following:
\begin{cor} Let $(\Omega,\mu)$ be an atomless  finite measure space and exponents \,$q(t) \leq p(t)$ $\mu$-a.e.. If
$$ \int_0^{\mu(\Omega)} a^{q^{*}(x)} dx = \infty \qquad \text{for some } a>1,
$$ then the inclusion $\lpv \hookrightarrow \lqv$ is not DSS.
\end{cor}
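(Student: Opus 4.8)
The plan is to exploit the factorization of the canonical inclusion $j:\lin \hookrightarrow \lqv$ through $\lpv$, namely
$$ \lin \stackrel{i_1}{\hookrightarrow} \lpv \stackrel{i}{\hookrightarrow} \lqv, $$
which is available since, on a finite measure space, $q(\cdot)\le p(\cdot)$ guarantees that both $i_1:\lin\hookrightarrow\lpv$ and $i:\lpv\hookrightarrow\lqv$ are bounded. The hypothesis $\int_0^{\mu(\Omega)} a^{q^*(x)}\,dx=\infty$ for some $a>1$ is exactly the failure of condition $(1)$ of Theorem \ref{Teor-3} for the exponent $q(\cdot)$; hence, by that theorem, the composition $j$ is not DSS. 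The point of the argument is to transfer this failure from $j$ to the second factor $i$.

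Concretely, I would first invoke Lemma \ref{lema-1} with $r(\cdot)=q(\cdot)$ to produce $\beta>0$ and a disjoint measurable sequence $(E_n)$ with $\|\chi_{E_n}\|_{q(\cdot)}\ge\beta$ for every $n$. Since the $E_n$ have positive measure, the vectors $(\chi_{E_n})$ form a pairwise disjoint, non-null sequence in $\lpv$, and I regard $(\chi_{E_n})$ as living in $\lpv$. The goal is then to show that $i$ restricted to $[\chi_{E_n}]_{p(\cdot)}$ is an isomorphism onto its image, i.e.\ that the norms $\|\cdot\|_{p(\cdot)}$ and $\|\cdot\|_{q(\cdot)}$ are equivalent on this span.

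One inequality, $\|\sum a_n\chi_{E_n}\|_{q(\cdot)}\le C\,\|\sum a_n\chi_{E_n}\|_{p(\cdot)}$, is merely the continuity of $i$. For the reverse I would factor through $\lin$: continuity of $i_1$ gives $\|\sum a_n\chi_{E_n}\|_{p(\cdot)}\le C_p\,\|\sum a_n\chi_{E_n}\|_{\infty}=C_p\max_n|a_n|$, exactly as in the proof of Theorem \ref{Teor-3}. On the other hand, disjointness of the supports together with the lattice monotonicity of the Luxemburg norm yields, for the index $n_0$ attaining the maximum, $\beta\,|a_{n_0}|\le\|a_{n_0}\chi_{E_{n_0}}\|_{q(\cdot)}\le\|\sum a_n\chi_{E_n}\|_{q(\cdot)}$, so that $\max_n|a_n|\le\tfrac1\beta\|\sum a_n\chi_{E_n}\|_{q(\cdot)}$. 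Combining these, $\|\sum a_n\chi_{E_n}\|_{p(\cdot)}\le\tfrac{C_p}{\beta}\|\sum a_n\chi_{E_n}\|_{q(\cdot)}$, which gives the equivalence of norms. Hence $i|_{[\chi_{E_n}]_{p(\cdot)}}$ is an isomorphism and $i$ is not DSS.

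There is no genuine obstacle here: this is truly a direct consequence. The only points demanding care are that Theorem \ref{Teor-3} (equivalently, Lemma \ref{lema-1}) must be applied to $q(\cdot)$ rather than $p(\cdot)$, and that the single sequence $(\chi_{E_n})$ — disjoint simultaneously in $\lin$ and in $\lpv$ — is what witnesses the failure of DSS, with the $\lqv$-norm controlling the sup norm from below by disjointness and the $\lin$-norm controlling the $\lpv$-norm from above by continuity of $i_1$.
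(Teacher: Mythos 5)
Your proof is correct and is essentially the paper's argument: the paper dispatches this corollary as ``a direct consequence, by factorization'' of Theorem \ref{Teor-3}, and what you write out is exactly that factorization made explicit, reusing Lemma \ref{lema-1} with $r(\cdot)=q(\cdot)$ and the same three-norm sandwich $\beta\max_n|a_n|\le\|\sum a_n\chi_{E_n}\|_{q(\cdot)}\le C\|\sum a_n\chi_{E_n}\|_{p(\cdot)}\le C\,C_p\max_n|a_n|$ that appears in the proof of $(7)\Rightarrow(1)$ of that theorem.
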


 Let us give another strictly singular criteria for inclusions \,$L^{\infty}( \mu) \hookrightarrow \lpv$.
 Recall  that the  \textit{exponential}  Orlicz space \,$L^{exp}_{0}(\mu) $ \, defined  by the function \,$ \varphi(x) = e^{x}-1 $\, and a finite measure is the space of all measurable functions $f$ such that
$$ \,\, \int_{\Omega} e^{| r f(t)|} \,d\mu < \infty \quad\text{ for every } r > 0 .
$$
Then, we have:

\begin{thm}\label{Teor-4}  Let $(\Omega,\mu)$ be an atomless finite measure space and  an exponent $p(t)$. TFAE:
\begin{enumerate}
  \item The inclusion  \,$\displaystyle L^{\infty}( \mu) \hookrightarrow \lpv$ \, is strictly singular.
  \item \,$\displaystyle  p(\cdot) \in \,L^{exp}_{0}(\mu)$.
  \item  $\displaystyle \, \lim_{x\rw 0}\, \frac{\int_0^x p^*(s) ds}{x\ln (\frac{e}{x})}\,= \, 0$.
  \item  $\displaystyle \lim_{x\rw \mu(\Omega)^-} (\mu(\Omega) -x) ^{(\frac{1}{p})^*(x)} =\, 0 .$

\end{enumerate}
\end{thm}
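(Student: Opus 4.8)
The plan is to funnel all four statements through the single analytic condition
$$ (\ast)\qquad \int_0^{\mu(\Omega)} a^{p^*(x)}\,dx < \infty \quad\text{for every } a>1, $$
which by Theorem \ref{Teor-3} is already equivalent to statement (1). It then suffices to show that (2), (3) and (4) are each equivalent to $(\ast)$, and the most economical way to organise this is to isolate a single clean reformulation of $(\ast)$ in terms of the rearrangement, namely the pointwise growth bound $p^*(x)=o\!\left(\ln\tfrac1x\right)$ as $x\to 0^+$. Every equivalence will then be checked against this bound.

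First I would dispatch $(1)\Leftrightarrow(2)$. Writing $a=e^r$ and invoking the rearrangement identity $\int_\Omega a^{p(t)}\,d\mu=\int_0^{\mu(\Omega)} a^{p^*(x)}\,dx$ recorded at the start of Section 3, the finiteness of $\int_\Omega e^{r p}\,d\mu$ for every $r>0$ — which, since $p\geq 1>0$, is exactly the requirement $p(\cdot)\in L^{exp}_0(\mu)$ — coincides verbatim with $(\ast)$. Together with Theorem \ref{Teor-3} this gives $(1)\Leftrightarrow(2)\Leftrightarrow(\ast)$ at once.

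For $(\ast)\Leftrightarrow(4)$ I would run the rearrangement Lemmas. Put $g=1/p$, so $g^*=(\tfrac1p)^*$. Lemma \ref{lema-a} gives $p^*=(1/g)^*=(1/g^*)^*$, and since $1/g^*$ is increasing, Lemma \ref{lema-2} yields $p^*(x)=1/g^*(\mu(\Omega)-x)$; the substitution $y=\mu(\Omega)-x$ then turns $(\ast)$ into $\int_0^{\mu(\Omega)} a^{1/g^*(y)}\,dy<\infty$. If $p^+<\infty$ then $p^*$ is bounded and all four statements hold trivially, so I may assume $p^+=\infty$, whence $g^*$ is decreasing with $g^*(0)=1/p^->0$ and $g^*\to 0$ at $\mu(\Omega)$. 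Then Lemma \ref{prop-1} (taking $f=g^*$, $b=\mu(\Omega)$) shows $(4)\Rightarrow(\ast)$, because (4) is precisely its hypothesis $\lim_{y\to\mu(\Omega)^-}(\mu(\Omega)-y)^{g^*(y)}=0$. For the converse the same substitution recasts (4) as $\lim_{x\to0^+}x^{1/p^*(x)}=0$, i.e. $\tfrac{\ln(1/x)}{p^*(x)}\to\infty$, which is literally $p^*(x)=o(\ln\tfrac1x)$; and this bound returns $(\ast)$ directly, since $p^*(x)\le\epsilon\ln\tfrac1x$ near $0$ forces $e^{rp^*(x)}\le x^{-r\epsilon}$, integrable once $\epsilon<1/r$.

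Finally, for (3) I would again compare with the pointwise bound. As $p^*$ is decreasing, $p^*(x)\le\frac1x\int_0^x p^*(s)\,ds$, so (3) immediately yields $p^*(x)=o(\ln\tfrac1x)$; conversely, if $p^*(s)\le\epsilon\ln\tfrac1s$ for small $s$, integrating and using $\int_0^x\ln\tfrac1s\,ds=x\ln\tfrac{e}{x}$ gives $\frac1x\int_0^x p^*\le\epsilon\ln\tfrac{e}{x}$, which is (3); this is exactly where the normalisation $x\ln(e/x)$ appearing in (3) is matched. It remains to verify the one implication I have been assuming, $(\ast)\Rightarrow p^*(x)=o(\ln\tfrac1x)$: were the bound to fail along some $x_n\to0$ with $p^*(x_n)\ge c\ln\tfrac1{x_n}$, the monotonicity of $p^*$ would make $\int_{x_n/2}^{x_n}a^{p^*}\,dx$ blow up for $a$ large, and summing over a disjoint subsequence contradicts $(\ast)$. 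I expect this last estimate, together with keeping the averaged quantity in (3) and the pointwise growth bound aligned through the factor $\ln(e/x)$, to be the only genuinely delicate point; the remainder is bookkeeping with rearrangements through Lemmas \ref{lema-a}, \ref{lema-2} and \ref{prop-1}.
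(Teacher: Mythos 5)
Your proposal is correct, and it reaches the paper's four equivalences by a partly different route. The step $(1)\Leftrightarrow(2)$ and the direction $(4)\Rightarrow(\ast)$ (via Lemmas \ref{lema-a}, \ref{lema-2} and \ref{prop-1}) coincide with what the paper does. Where you genuinely diverge is in making the pointwise growth bound $p^*(x)=o(\ln\frac1x)$ as $x\to 0^+$ the hub of the whole argument. For $(2)\Leftrightarrow(3)$ the paper does not argue at all: it cites the identification of the order-continuous exponential Orlicz space with the Marcinkiewicz space $M_0(\varphi)$, $\varphi(x)=x\ln(e/x)$, from \cite{K-P-S}; your direct derivation (the average $\frac1x\int_0^x p^*$ dominates $p^*(x)$ by monotonicity in one direction, and $\int_0^x\ln\frac1s\,ds=x\ln\frac{e}{x}$ in the other) is an elementary, self-contained substitute for that citation. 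For $(1)\Rightarrow(4)$ the paper takes a detour through the conjugate exponent $p'$ and Corollary \ref{Cor3}, which forces a separate treatment of the set $\{p=1\}$ of possibly positive measure; your derivation of $(\ast)\Rightarrow p^*(x)=o(\ln\frac1x)$ (the lower bound $\int_{x_n/2}^{x_n}a^{p^*}\geq\frac12 x_n^{1-c\ln a}$, which already blows up without any summation) avoids both the conjugation trick and the case split, at the modest cost of one extra computation. Both arguments are sound; yours buys a uniform, citation-free proof in which all four conditions are read off from a single asymptotic on $p^*$, while the paper's buys brevity by leaning on Theorem \ref{Teor-3}, Corollary \ref{Cor3} and the Marcinkiewicz identification.
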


\begin{proof}
$(1) \Leftrightarrow (2)$. Assume  that the inclusion $\displaystyle  L^{\infty}( \mu) \hookrightarrow \lpv$  is strictly singular. Then, by the above characterization,
$$ \int_0^{\mu(\Omega)} a^{p^*(x)} dx =  \int_{\Omega} a^{p(t)} d\mu = \int_0^{\mu(\Omega)} (a^{p(\cdot)})^*(x) \,dx   < \infty
$$
for every $a>1.$ Since $1<a = e^{\frac{1}{s}}$ for some  \,$0 < s < \infty$,   we have
$$  \int_{\Omega} a^{p(t)} d\mu =  \int_{\Omega} e^{\frac{p(t)}{s}} d\mu <\infty
$$ for every $s>0$. Therefore $p(\cdot)\in L^{exp}_{0}(\mu)$. The converse is equal.

The equivalence $(2) \Leftrightarrow (3)$ follows from the following known fact: the order continuous exponential Orlicz space \,$ L^{exp}_{0}(\Omega)$\, coincides with the order-continuous Marcinkiwiecz space \,$M_0(\varphi)$\, defined  by the
function \, $\varphi(x)= x \ln (\frac{e}{x})$\, (cf. \cite{K-P-S} p.116). Hence the exponent  $p(\cdot) \in L^{exp}_{0}(\Omega)$\, satisfies the  condition
$$\lim_{x\rw 0} \, \frac{\int_0^x p^*(s) ds}{x\,\ln (\frac{e}{x})}\, = \, 0.
$$

$(1) \Rightarrow (4)$.  First assume $1<p(t)$ \, $\mu$-a.e.. Then, since \,$1<p'(t) = \frac{p(t)}{p(t)-1}< \infty$, $(1)$ says that $\int_0^{\mu(\Omega)} a^{(\frac{p'}{p'-1})^*(x)}dx < \infty $ for every $a>1$. And Corollary \ref{Cor3} allows to get
$$ \lim_{x\rw \mu(\Omega)^-} (\mu(\Omega) -x) ^{(\frac{p'-1}{p'})^*(x)} =\, 0 .
$$
Assume now that $\Omega_1 = \{t\in \Omega : p(t)=1 \}$ has positive measure,  take then the exponent \,$r(t) = 2\chi_{\Omega_{1}}+p(t) \chi_{\Omega_{1}^{c}}$.
Thus
$$ \int_{0}^{\mu(\Omega)} a^{r^*(x)}dx = \int_{\Omega} a^{r(t)}d\mu \leq \int_{\Omega_1} a^2 d\mu + \int_{\Omega\backslash \Omega_1}a^{p(t)} d\mu < \infty.
$$ Now, since $\displaystyle \lim_{x\rw \mu(\Omega)^-} (\mu(\Omega) -x) ^{(\frac{1}{r})^*(x)} =\, 0$ and $(\frac{1}{r})^*(\cdot)\leq (\frac{1}{p})^*(\cdot)$,  we conclude that
$$ \displaystyle \lim_{x\rw \mu(\Omega)^-} (\mu(\Omega) -x) ^{(\frac{1}{p})^*(x)} =\, 0.
$$

\noindent $(4)\Rightarrow (1)$ The case $p^+ < \infty$ is trivial. Assume now that $p^+ = \infty.$ Since $(\frac{1}{p})^*$ is decreasing and \,  $\lim_{x\rw \mu(\Omega)} (\frac{1}{p})^*(x)=0$, it follows from Lemma \ref{prop-1} that
$$ \int_0^{\mu(\Omega)} a^{\frac{1}{(\frac{1}{p})^*}(x)} dx < \infty \qquad \text{for every } a>1 .
$$
Now, using Lemma \ref{lema-a}, the functions \,$p = \frac{1}{\frac{1}{p}}$ and $\frac{1}{(\frac{1}{p})^*}$ are equi-distributed, thus
$$ \int_0^{\mu(\Omega)} a^{p^*(x)} dx = \int_0^{\mu(\Omega)} a^{(\frac{1}{(\frac{1}{p})^*})^*(x)} dx =  \int_0^{\mu(\Omega)} a^{\frac{1}{(\frac{1}{p})^*}(x)} dx < \infty.
$$
\end{proof}


We pass now to study the general case of DSS inclusions \,$\lpv \hookrightarrow \lqv$\, for unbounded exponents. First notice that now the condition (6) in above Theorem 4.1 is no enough for getting disjoint strict singularity, as the following example shows:

\begin{ex} \label{ejemplo-0} Let $q(x)= \frac{1}{x}$ with $x\in (0,1)$  and \,$p(x) = (1+\epsilon)q(x)$ \, for some  $\epsilon > 0$. Since $\int_0^1 a^{\frac{1}{x}} dx = \infty$ for every $a>1$, we have  by Corollary 4.4 that  the inclusion
$ L^{p(\cdot)} \hookrightarrow L^{q(\cdot)}$\, is not DSS,  in spite of
$$ \lim_{x\rw 1^-} (1-x)^{(p-q/p)^*(x)}=  \lim_{x\rw 1^-} (1-x)^{\epsilon/1+\epsilon}\,=\, 0.
$$
\end{ex}


We wonder now what happens with the inclusions $\lpv \hookrightarrow \lqv$ \, whether  $q^+=\infty$ and $\displaystyle \int_0^{\mu(\Omega)} a^{q^*(x)} dx < \infty$ for every $a>1$. Notice that, if for $a>1$
$$
\int_0^{\mu(\Omega)} a^{(\frac{p\,q}{p-q})^*(x)}dx < \infty,
$$
then also $ \int_0^{\mu(\Omega)} a^{q^*(x)}dx < \infty$. Indeed, for \,$1<f(t)=\frac{p}{q}(t) $ we have
$$
\frac{p\,q}{p-q}(t)= \frac{f(t)}{f(t)-1}\,q(t) > q(t),
$$
so \,$(\frac{p\,q}{p-q})^*(x)\geq q^*(x)$.  This leads to the following:


\begin{thm} \label{Teor-2b} Let $(\Omega,\mu) $ be an atomless finite measure space and exponents \, $q(\cdot) < p(\cdot) $ $\mu$-a.e.. TFAE:
\begin{enumerate}
    \item $\int_0^{\mu(\Omega)} a^{(\frac{p\,q}{p-q})^*(x)}dx < \infty$ \,\,\, for every $a>1$.
    \item The inclusion \,$ \lpv \hookrightarrow \lqv$\, is $L$-weakly compact.
    \item The inclusion $ \lpv \hookrightarrow \lqv$\, is $M$-weakly compact.
    \item The inclusion $ \lpv \hookrightarrow \lqv$\, is $DSS$.
    \item The restriction of  the inclusion \,$\lpv \hookrightarrow \lqv$\, on  any subspace spanned by a disjoint sequence \,$(\frac{\chi_{E_n}}{\mu(E_n)^{\frac{1}{p(t)}}})$\,  is  not an isomorphism.
    \item $\lim_{x\rw \mu(\Omega)^{-}} \, \,(\mu(\Omega)-x)^{(\frac{p-q}{p\,q})^*(x)}= 0$.
\end{enumerate}
\end{thm}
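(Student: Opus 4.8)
The plan is to establish the cycle $(1)\Rightarrow(2)\Rightarrow(3)\Rightarrow(4)\Rightarrow(5)\Rightarrow(6)\Rightarrow(1)$, most of whose links are inherited almost verbatim from Theorem \ref{Teor-2} and Proposition \ref{prop-33}. Indeed, $(2)\Rightarrow(3)$ follows by applying $L$-weak compactness to the supports $A_n=\mathrm{supp}(f_n)$ of a normalized disjoint sequence $(f_n)$ in $\lpv$ (on the finite space these satisfy $\chi_{A_n}\rw0$ $\mu$-a.e.), so that $\|f_n\|_{q(\cdot)}\le\sup_{f\in B_{\lpv}}\|f\chi_{A_n}\|_{q(\cdot)}\rw0$. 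The implications $(3)\Rightarrow(4)\Rightarrow(5)$ are immediate, since every $M$-weakly compact operator is DSS and $(5)$ merely tests DSS on the special sequences $(\chi_{E_n}/\mu(E_n)^{\frac{1}{p(t)}})$. Finally $(5)\Rightarrow(6)$ is exactly the content of the proof of Proposition \ref{prop-33}, which shows that whenever $(6)$ fails one of these special disjoint sequences spans a subspace on which the inclusion is an isomorphism. Thus the genuine work sits in the two steps $(1)\Rightarrow(2)$ and $(6)\Rightarrow(1)$.

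For $(1)\Rightarrow(2)$ the argument of Theorem \ref{Teor-2} no longer applies, because it converted a norm bound $\|f_k\chi_{A_{n_k}}\|_{q(\cdot)}>\delta$ into a modular bound through the factor $\delta^{-q^+}$, and here $q^+=\infty$. The clean substitute is the generalized (three–exponent) Hölder inequality: putting $s(\cdot)=\frac{p\,q}{p-q}(\cdot)$, one checks $\frac1{q(t)}=\frac1{p(t)}+\frac1{s(t)}$ a.e., and therefore there is a constant $C$ with $\|f\chi_A\|_{q(\cdot)}\le C\,\|f\|_{p(\cdot)}\,\|\chi_A\|_{s(\cdot)}$ for every measurable $A$. (This follows from the stated Hölder inequality used twice together with the duality $(\lpv)'=L^{p'(\cdot)}$, via $\frac1{p'}=\frac1{q'}+\frac1s$.) Hence the multiplication norm obeys $\sup_{f\in B_{\lpv}}\|f\chi_{A_n}\|_{q(\cdot)}\le C\,\|\chi_{A_n}\|_{s(\cdot)}$, and hypothesis $(1)$ combined with Proposition \ref{Teo-1} forces $\|\chi_{A_n}\|_{s(\cdot)}\rw0$ whenever $\chi_{A_n}\rw0$ $\mu$-a.e. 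This is precisely $L$-weak compactness.

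The step $(6)\Rightarrow(1)$ mirrors $(6)\Rightarrow(1)$ of Theorem \ref{Teor-2}, now with $\frac{p}{p-q}$ replaced by $\frac{p\,q}{p-q}$. Write $f=\frac{p-q}{p\,q}$, which is positive a.e. and bounded above by $1$. If $ess\inf f>0$ then $s=1/f=\frac{p\,q}{p-q}$ is a bounded exponent and $(1)$ holds trivially on the finite space. Otherwise $ess\inf f=0$, so $f^*$ is decreasing with $f^*(0)>0$ and $\lim_{x\rw\mu(\Omega)^-}f^*(x)=0$; by Lemma \ref{lema-a} and Lemma \ref{lema-2} (using that $1/f^*$ is increasing) one obtains $(\frac{p\,q}{p-q})^*=(1/f)^*=(1/f^*)^*$ and, after the substitution $y=\mu(\Omega)-x$,
$$\int_0^{\mu(\Omega)}a^{(\frac{p\,q}{p-q})^*(x)}\,dx=\int_0^{\mu(\Omega)}a^{\frac{1}{f^*(y)}}\,dy.$$
Since hypothesis $(6)$ reads $\lim_{x\rw\mu(\Omega)^-}(\mu(\Omega)-x)^{f^*(x)}=0$, Lemma \ref{prop-1} with $b=\mu(\Omega)$ gives finiteness of the right-hand integral for every $a>1$, which is $(1)$.

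The main obstacle is $(1)\Rightarrow(2)$: the unboundedness of $q(\cdot)$ destroys the modular-to-norm comparison available in the bounded case, and the way around it is to recognize $\frac{p\,q}{p-q}$ as the exponent governing the norm of the multiplication operator $f\mapsto f\chi_A$ from $\lpv$ into $\lqv$ through the relation $\frac1q=\frac1p+\frac1s$. Once this identification is made, condition $(1)$ is read off directly via Proposition \ref{Teo-1}, and the appearance of $\frac{p\,q}{p-q}$ (in place of the $\frac{p}{p-q}$ of the bounded case) becomes transparent.
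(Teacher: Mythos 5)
Your proposal is correct and follows essentially the same route as the paper: the cycle $(1)\Rightarrow(2)\Rightarrow(3)\Rightarrow(4)\Rightarrow(5)\Rightarrow(6)\Rightarrow(1)$, with $(1)\Rightarrow(2)$ via the three-exponent H\"older inequality for $s(\cdot)=\frac{p\,q}{p-q}(\cdot)$ together with Proposition \ref{Teo-1}, $(5)\Rightarrow(6)$ from Proposition \ref{prop-33}, and $(6)\Rightarrow(1)$ from Lemmas \ref{lema-a}, \ref{lema-2} and \ref{prop-1}. Your explicit case distinction on $\mathrm{ess}\inf\frac{p-q}{p\,q}$ in the last step is a small but welcome refinement, since Lemma \ref{prop-1} formally requires the rearrangement to tend to zero.
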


\begin{proof}
$(1)\Rightarrow (2)$. Let us define the exponent $s(t):= \frac{p(t)\,q(t)}{p(t)-q(t)}$.
  Then $\,1/q(t) = 1/p(t) +1/s(t)$\, and using H\"older norm inequality 
  we have
  $$\|f\,\chi_{A} \|_{q(\cdot)} \,\leq \, 2 \, \|f\|_{p(\cdot)}\, \|\chi_{A}\|_{\frac{p\,q}{p-q}(\cdot) }$$
Hence by the hypothesis and Proposition 3.2 we get
$$\lim_{\mu(A_{n})\rightarrow 0} \sup_{f\in B_{\lpv}} \|f \chi_{A_{n}}\|_{q(\cdot)} \, \leq \, 2 \lim_{\mu(A_{n})\rightarrow 0} \|\chi_{A_{n}}\|_{\frac{p\,q}{p-q}(\cdot)} \, = \, 0,$$
so the inclusion $\lpv \hookrightarrow \lqv$  is $L$-weakly compact.

  $(2) \Rightarrow (3) \Rightarrow (4) \Rightarrow (5)  $  are clear  (see  Theorem 4.1).

   $(5) \Rightarrow (6)$ is Proposition \ref{prop-33}.

   $(6) \Rightarrow (1)$.  Consider  the function \,$f(t) = \frac{p-q}{p\,q}(t)$. From   Lemma \ref{lema-a} we have that the functions \, $\frac{1}{\frac{p-q}{pq}(t)}=\frac{pq}{p-q}(t) $ \, and \,  $\frac{1}{(\frac{p-q}{pq})^*(x)} $ \, are equi-distributed. Thus \, $(\frac{p\,q}{p-q})^* = (1/f^*)^*$\, and
$$
\int_{0}^{\mu(\Omega)} a^{\left(\frac{pq}{p-q}\right)^*(x)}dx  = \int_{0}^{\mu(\Omega)} a^{\left(\frac{1}{(\frac{p-q}{pq})^*}\right)^*(x)}dx
$$
  Now, as $1/f^*$ is an increasing function  it follows, by  Lemma \ref{lema-2},  that  \,$(1/f)^*(x)= (1/f^*)^*(x) = (1/f^*)(\mu(\Omega)-x)$. Hence
$$
 = \,\int_{0}^{\mu(\Omega)} a^{\frac{1}{(\frac{p-q}{pq})^*}(\mu(\Omega)-x)}dx  \,=\,\int_{0}^{\mu(\Omega)} a^{\frac{1}{(\frac{p-q}{pq})^*(y)}}dy.$$
Finally, the  boundedness of this integral follows now  from the hypothesis and Lemma  \ref{prop-1} for  $\mu(\Omega) = b $,  since \, $(\frac{p-q}{pq})^*(\cdot)$ is decreasing. 
\end{proof}

Notice that above extends the $L$-weak compactness criterion in \cite{E-G-N} (Thm 3.4) including now unbounded exponents.

\begin{ex} \label{examples}

\begin{enumerate}
             \item Let \, $p(t) = 1 + \ln(1 -\ln t ) $\, on $(0,1)$. The inclusion  \, $ L^{\infty}(0,1) \hookrightarrow L^{p(\cdot)}(0,1) $ is strictly singular, since for every $a>1$\,
             $$\int_{0}^1 a^{1+\ln(1-\ln t) } dt < \infty.
             $$
             \item Let \, $p_{\alpha}(t) =\frac{1}{t^{\alpha}}$\, for $\alpha > 0$ on $(0,1)$. The inclusions  \,$L^{\infty}(0,1) \hookrightarrow L^{p_{\alpha}(\cdot)}(0,1) $ \, and  $  L^{p_{\alpha}(\cdot)}(0,1)  \hookrightarrow  L^{1}(0,1)$\, are  not DSS.

            The inclusions \, $L^{p_{\alpha}(\cdot)}(0,1)\hookrightarrow L^{p_{\beta}(\cdot)}(0,1)$,  for $ 0<\alpha <\beta \leq 1 $    or   $ 1\leq \beta <\alpha <\infty$, are  not DSS.



             \item Let \, $p_{\alpha}(t) =\frac{1}{(1-t^{\alpha})}$\, on $(0,1)$\, and  \,$\alpha>0$. The  inclusions\,\, $  L^{p_{\alpha}(\cdot)}(0,1)  \hookrightarrow  L^{1}(0,1) $\,  and \,$L^{\infty}(0,1) \hookrightarrow L^{p_{\alpha}(\cdot)}(0,1) $\,
                    are not weakly compact.

   \item Let \,$p_{\alpha}(t) = \ln^{\alpha}(\frac{1}{t})$ for \,$t \in (0,1/e)$  and  $0 < \alpha <\infty$. The inclusion \, $ L^{\infty}(0,1/e) \hookrightarrow  L^{p_{\alpha}(\cdot)}(0,1/e)$\,  is strictly singular  if and only if  \,   $0 < \alpha < 1$. Indeed, this follows from Theorem \ref{Teor-4},  since   
            $$ \lim_{x\rightarrow 1/e} \, \big( \frac{1}{e}-x\big)^{\frac{1}{\ln^{\alpha}(\,(\frac{1}{e}-x)^{-1}\,)}}\,= \,  0 $$
            if and only if  \,  $0 < \alpha < 1$.
       On the other side  the inclusions  \,$L^{p_{\alpha}(\cdot)}(0, 1/e) \hookrightarrow L^{1}(0, 1/e) $, for $\alpha > 0$, are not DSS.


     The inclusions \,$ L^{p_{\beta }(\cdot)}(0, 1/e) \hookrightarrow  L^{p_{\alpha }(\cdot)} (0,1/e)$  for  \,$0<\alpha < \beta <\infty$  are not  DSS.

  \item Let $\alpha >0$, $t \in (0,1/e)$ and
   $$p_{\alpha}(t) : =  \frac{ \ln^{\alpha}(\frac{1}{t})}{\ln^{\alpha}(\frac{1}{t}) -1}.
   $$  The inclusion \, $ L^{p_{\alpha}(\cdot)}(0,1/e)    \hookrightarrow \, L^{1}(0, 1/e) $\, is weakly compact if and only if \,$0 < \alpha < 1$.

   \item Let  $q(t): = \sqrt{\ln (\frac{1}{t})}$\, for $t\in (0, \frac{1}{e})$ and \, $p(t):= (1+\epsilon)q(t)$, for some $\epsilon >0$. The inclusion
       \, $ L^{p(\cdot)}(0,\frac{1}{e})  \hookrightarrow    L^{q(\cdot)}(0, \frac{1}{e})
       $\,  is DSS. Indeed,
       $$ \lim_{x\rw \frac{1}{e}} (\frac{1}{e} - x)^{(\frac{p-q}{pq})^*(x)} = \lim_{x\rw \frac{1}{e}} \left[(\frac{1}{e} - x)^{(\frac{1}{q})^*(x)} \right]^{\frac{\epsilon}{1+\epsilon}} = 0.
       $$
 \end{enumerate}
\end{ex}


In case of \,$ p^+ <\infty$, the condition \,$ess \inf (p-q)(\cdot) > 0$\, is a sufficient condition for the inclusion  $\lpv \hookrightarrow \lqv$ be  DSS (\cite{F-H-R-S} Prop.3.3) or $L$-weakly compact (\cite{E-G-N} Thm. 3.4). However this fails when \,$p^{+}= \infty$, see Example \ref{ejemplo-0} (and compare it with Example \ref{examples} (6)).


\section{Regular exponents}

In this section we  assume some  regularity for the exponents,  getting then a simpler DSS criterion.
Recall  that an scalar function $f$ over a metric measure space $(\Omega,\mu,d)$ is said to be (locally) \textit{log-H\"older continuous} if there exists a constant $C>0$ such that, for all $x\not =y\in\Omega$,
$$
\vert f(x)-f(y)\vert\leq \frac{C}{\ln\left(e+\frac{1}{d(x,y)}\right)}.
$$
The class of log-H\"older continuous  exponents  is very useful  in applications of variable exponent spaces (cf. \cite{Libro,Libro2}).



\begin{prop}\label{reordenada-log-Holder}
Let \,$f:[0,1]\rightarrow [0,\infty)$ be a log-H\"older continuous function. Then its decreasing rearrangement $f^*$ is also log-H\"older continuous.
\end{prop}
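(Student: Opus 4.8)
The plan is to establish the log-Hölder estimate for $f^*$ with the \emph{same} constant $C$ that $f$ satisfies. Fix $x<y$ in the domain $[0,1)$ and write $h=y-x=|x-y|$. Since $f^*$ is decreasing, it suffices to bound $f^*(x)-f^*(y)\ge 0$; set $c_1=f^*(x)$, $c_2=f^*(y)$ and $\delta=c_1-c_2$, and assume $\delta>0$ (otherwise there is nothing to prove). The goal is to produce two points $u,v\in[0,1]$ with $|u-v|\le h$ and $f(u)-f(v)\ge\delta$; feeding this into the log-Hölder inequality for $f$ and using that $t\mapsto C/\ln(e+1/t)$ is increasing will give
\[
\delta \;\le\; |f(u)-f(v)| \;\le\; \frac{C}{\ln\!\left(e+\tfrac{1}{|u-v|}\right)} \;\le\; \frac{C}{\ln\!\left(e+\tfrac{1}{h}\right)},
\]
which is exactly log-Hölder continuity of $f^*$ at $x,y$.

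First I would control the size of the ``transition band'' $E:=\{t\in[0,1]: c_2<f(t)<c_1\}$. Using the defining property of the rearrangement (for $s<f^*(x)$ one has $\mu_f(s)>x$, and for $s>f^*(y)$ one has $\mu_f(s)\le y$), every $s\in(c_2,c_1)$ satisfies $x<\mu_f(s)\le y$. Passing to the limits $s\to c_2^{+}$ and $s\to c_1^{-}$ through the monotone convergence of super-level sets yields $|\{f>c_2\}|\le y$ and $|\{f\ge c_1\}|\ge x$. Since $\{f>c_2\}=E\sqcup\{f\ge c_1\}$, subtracting gives the crucial estimate $|E|\le y-x=h$.

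Next I would use that log-Hölder continuity forces $f$ to be uniformly continuous, so it attains its extrema on the compact interval $[0,1]$. The distributional relations make both threshold sets nonempty: since $\mu_f(s)>x$ for every $s<c_1$, each set $\{f>s\}$ is nonempty, and letting $s\nearrow c_1$ with a compactness argument produces $u$ with $f(u)\ge c_1$ (this also covers the endpoint $x=0$); and since $|\{f>c_2\}|\le y<1$, the set $\{f\le c_2\}$ has positive measure, providing $v$ with $f(v)\le c_2$. I would then choose such $u,v$ minimizing $|u-v|$ subject to $f(u)\ge c_1$ and $f(v)\le c_2$ (a minimizer exists by compactness and continuity), say $u<v$. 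By minimality, no interior point $w\in(u,v)$ can satisfy $f(w)\ge c_1$ or $f(w)\le c_2$, since either would produce a strictly closer admissible pair; hence $(u,v)\subseteq E$ and therefore $|u-v|\le|E|\le h$. As $f(u)-f(v)\ge c_1-c_2=\delta$, the displayed chain of inequalities closes the argument.

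The main obstacle is precisely the middle step: translating the smallness of the band $E$ into the existence of two \emph{nearby} points straddling it. A priori the region where $f$ is large and the region where $f$ is small could lie at opposite ends of $[0,1]$, so the measure bound $|E|\le h$ by itself is not enough. It is the combination of continuity (a free consequence of log-Hölder regularity) with the minimality/compactness selection of $(u,v)$ that forces the entire descent from level $c_1$ to level $c_2$ to occur across a single short subinterval contained in $E$; this is what pins $|u-v|$ below $h$ and, since the monotone weight $C/\ln(e+1/t)$ is respected under $|u-v|\le h$, makes the constant $C$ transfer verbatim from $f$ to $f^*$.
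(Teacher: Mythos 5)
Your proof is correct and rests on exactly the same construction as the paper's: bound the measure of the transition band $\{t: f^*(y)<f(t)<f^*(x)\}$ by $|x-y|$ using equimeasurability, then select the closest pair of points straddling the band so that the open segment between them is contained in it. The only difference is presentational — you argue directly (and thereby record that $f^*$ inherits the \emph{same} constant $C$), while the paper runs the identical argument by contradiction along sequences.
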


\begin{proof}
Assume that \,$f^*$ is not log-H\"older continuous, so there exist two sequences $(x_n)$ and $(y_n)$ in $[0,1]$ such that, for every natural $n$,
$$
\vert f^*(x_n)-f^*(y_n)\vert> \frac{n}{\ln\left(e+\frac{1}{|x_n-y_n|}\right)}.
$$ We can suppose \, $f^*(x_n) > f^*(y_n)$ \,for every $n$.
We will look for  sequences $(a_n)$ and $(b_n)$ such that \,$|a_n-b_n| \leq \vert x_n-y_n\vert$ \,and\,  $\vert f(a_n)-f(b_n)\vert\geq \vert f^*(x_n)-f^*(y_n)\vert$, since then
$$
\vert f(a_n)-f(b_n)\vert
\geq\vert f^*(x_n)-f^*(y_n)\vert
> \frac{n}{\ln\left(e+\frac{1}{\vert x_n-y_n\vert}\right)}
\geq \frac{n}{\ln\left(e+\frac{1}{| a_n-b_n)}\right)}.
$$

We know by the properties of  $f^*$ that
$$
\left|\Big\{t\in[0,1]:f^*(y_n) <f^*(t)< f^*(x_n) \,\Big\}\right|
\leq
\vert x_n-y_n\vert,
$$
and
\begin{equation}\label{equi2}
 \left|\Big\{t\in [0,1]: \,f^*(y_n)\}<f(t)<f^*(x_n)\Big\}\,\right|
\leq \, \vert x_n-y_n\vert.
\end{equation}

For every natural $n$ we define the disjoint compact sets \,$A_n:= f^{-1}([0,f^*(y_n)])$ and \,$B_n:= f^{-1}([f^*(x_n), \infty))$. Thus  there exist $a_n\in A_n$ and $b_n\in B_n$ such that
$$ |a_n-b_n| \, =\, \min \{|r-s| \,:\, r\in A_n  \text{ and } s\in B_n\}.
$$ Since $f$ is continuous we have
$$ f(a_n) \leq f^*(y_n) \quad \text{and}\quad f(b_n)\geq f^*(x_n).
$$ Now, for $t = \lambda a_n + (1-\lambda)b_n$ with $0<\lambda <1$, it follows from the definition of $a_n$ and $b_n$ that
$$ f^*(y_n) < f(t) < f^*(x_n).
$$ Using (\ref{equi2}), we conclude that
\,$  |a_n-b_n| \leq |x_n-y_n|.$
\end{proof}



\begin{prop}\label{DSS-log-Holder-separados}
Let  $p(\cdot)\geq q(\cdot)$ log-H\"older continuous exponents on $[0,1]$. If
$$
\lim_{x\rightarrow 1^{-}}\,(1-x)^{(p-q)^*(x)}=0,
$$
then \, \,$
ess\inf(p-q)>0.
$
\end{prop}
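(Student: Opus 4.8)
The plan is to argue by contradiction: assuming $ess\inf(p-q)=0$ I will show that $(1-x)^{(p-q)^*(x)}$ cannot tend to $0$. First I would set $g:=p-q\ge 0$. Since $p$ and $q$ are log-H\"older continuous, so is $g$, because the defining inequality passes to differences with the sum of the two constants; hence by Proposition \ref{reordenada-log-Holder} its decreasing rearrangement $g^*=(p-q)^*$ is log-H\"older continuous on $[0,1]$, and in particular continuous. I would also record the standard fact that, since $g$ and $g^*$ are equidistributed and $g^*$ is decreasing, $ess\inf(p-q)=\lim_{x\to 1^-} g^*(x)$. Thus the contradiction hypothesis becomes $\lim_{x\to 1^-} g^*(x)=0$.

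The crucial step is to turn the log-H\"older modulus of $g^*$ into a one-sided pointwise bound near $x=1$. For $x<y$ in $[0,1)$, monotonicity and log-H\"older continuity give
$$
0\le g^*(x)-g^*(y)\le \frac{C}{\ln\left(e+\frac{1}{y-x}\right)}.
$$
Letting $y\to 1^-$ and using both $g^*(y)\to 0$ and the continuity of $g^*$, the right-hand side tends to $C/\ln\!\left(e+\tfrac{1}{1-x}\right)$, so that
$$
g^*(x)\le \frac{C}{\ln\left(e+\frac{1}{1-x}\right)}
\qquad\text{for } x \text{ near } 1 .
$$

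Finally I would substitute this bound into the quantity of interest. Writing $(1-x)^{g^*(x)}=\exp\!\left(-g^*(x)\,\lvert\ln(1-x)\rvert\right)$, the estimate above yields
$$
g^*(x)\,\lvert\ln(1-x)\rvert\le \frac{C\,\lvert\ln(1-x)\rvert}{\ln\left(e+\frac{1}{1-x}\right)},
$$
and the right-hand side converges to $C$ as $x\to 1^-$, since numerator and denominator are both asymptotic to $\ln\frac{1}{1-x}$. Hence $g^*(x)\,\lvert\ln(1-x)\rvert$ stays bounded, say by $C'$, so $(1-x)^{(p-q)^*(x)}\ge e^{-C'}>0$ for $x$ close to $1$, contradicting the hypothesis that this limit is $0$. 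Therefore $\lim_{x\to 1^-} g^*(x)>0$, i.e. $ess\inf(p-q)>0$.

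I expect the main obstacle to be the clean justification of the limiting passage in the crucial step: one must explicitly invoke the continuity of $g^*$ (a consequence of its log-H\"older continuity) together with the assumed limit $\lim_{x\to 1^-}g^*(x)=0$ to move from the two-point modulus estimate to the pointwise bound $g^*(x)\le C/\ln(e+1/(1-x))$. The remaining analysis of the ratio $\lvert\ln(1-x)\rvert/\ln(e+1/(1-x))$ is routine.
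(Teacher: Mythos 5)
Your proof is correct, and it reaches the conclusion by a genuinely more direct route than the paper, although both arguments hinge on the same key lemma (Proposition \ref{reordenada-log-Holder}, that log-H\"older continuity passes to the decreasing rearrangement) and both argue by contradiction from $\mathrm{ess}\inf(p-q)=0$, i.e. $\lim_{y\to 1^-}(p-q)^*(y)=0$. The paper constructs a sequence $x_n\nearrow 1$ along which $(x_{n+1}-x_n)^{(p-q)^*(x_{n+1})}\geq \tfrac12$, then splits this quantity as
$\bigl(\tfrac{1}{x_{n+1}-x_n}\bigr)^{(p-q)^*(x_n)-(p-q)^*(x_{n+1})}\cdot (x_{n+1}-x_n)^{(p-q)^*(x_n)}$
and uses the log-H\"older modulus between the two consecutive points to bound the first factor by $e^M$ while the second tends to $0$ by hypothesis. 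You instead push the second comparison point to the endpoint: letting $y\to 1^-$ in the two-point estimate yields the clean pointwise bound $(p-q)^*(x)\leq C/\ln\bigl(e+\tfrac{1}{1-x}\bigr)$, from which $(1-x)^{(p-q)^*(x)}\geq e^{-C'}>0$ near $1$ follows at once. What your approach buys is brevity and an explicit quantitative lower bound for $(1-x)^{(p-q)^*(x)}$, with no auxiliary sequence to construct; what the paper's approach buys is that it never needs the limiting passage at the endpoint (it only ever compares two interior points). Two small remarks: the continuity of $(p-q)^*$ that you invoke in the limiting step is not actually needed, since you only pass to the limit in $g^*(y)$ and in the elementary function $y\mapsto \ln\bigl(e+\tfrac{1}{y-x}\bigr)$; and the identification $\mathrm{ess}\inf(p-q)=\lim_{x\to 1^-}(p-q)^*(x)$, which you state explicitly, is indeed the standard fact the paper uses implicitly.
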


\begin{proof}
Suppose that $ess\inf(p-q)=0$ (hence  $\lim_{x\rightarrow 1}(p-q)^*(x)=0$).
Then, given $x_n\in[0,1)$, we can take $x_{n+1}$ close enough to $1$ to get
$$
(x_{n+1}-x_n)^{(p-q)^*(x_{n+1})}\simeq(1-x_n)^{(p-q)^*(1)}=(1-x_n)^0=1.
$$
Concretely, we take $x_{n+1}$ so that $(x_{n+1}-x_n)^{(p-q)^*(x_{n+1})}\geq\frac{1}{2}$. So, by induction, for every $x_0\in[0,1)$, we can construct a sequence $(x_n)\nearrow 1$ satisfying
$$
(x_{n+1}-x_n)^{(p-q)^*(x_{n+1})}\geq\frac{1}{2}.
$$

But, on the other side, $p(\cdot)$ and $q(\cdot)$ are log-H\"older continuous, so $(p-q)(\cdot)$ and $(p-q)^*(\cdot)$ are log-H\"older too by above proposition. If we also suppose that $\lim_{x\rightarrow 1}(1-x)^{(p-q)^*(x)}=0$ we reach a contradiction, as
\begin{align*}
   (x_{n+1}-x_n)^{(p-q)^*(x_{n+1})}
   ={}  &   (x_{n+1}-x_n)^{(p-q)^*(x_{n+1})-(p-q)^*(x_n)+(p-q)^*(x_n)}\\
   ={}  &   \left(\frac{1}{x_{n+1}-x_n}\right)^{(p-q)^*(x_n)-(p-q)^*(x_{n+1})}\cdot(x_{n+1}-x_n)^{(p-q)^*(x_n)}\\
   \leq{}   &    \left(\frac{1}{x_{n+1}-x_n}\right)^{\frac{M}{\ln\left(e+\frac{1}{x_{n+1}-x_n}\right)}}\cdot(1-x_n)^{(p-q)^*(x_n)}\xrightarrow{n\rightarrow\infty} 0,
\end{align*}
because $(1-x_n)^{(p-q)^*(x_n)}\xrightarrow{n\rightarrow\infty} 0$\, and \,$\left(\frac{1}{x_{n+1}-x_n}\right)^{\frac{M}{\ln\left(e+\frac{1}{x_{n+1}-x_n}\right)}}\rightarrow e^M. $
\end{proof}




\begin{cor}\label{equiv-DSS-log-Holder}
Let \,$p(\cdot)\geq q(\cdot)$ \, be log-H\"older continuous exponents on $[0,1]$ with $p^+<\infty$. TFAE:
\begin{enumerate}
    \item \,$ess\inf (p(\cdot)-q(\cdot)) > 0$.
    \item The inclusion \,$\lpvo\hookrightarrow \lqvo$\,  is DSS.
    \item $\lim_{x\rightarrow 1^{-}}\,\,(1-x)^{(p-q)^*(x)}=0$.
\end{enumerate}
\end{cor}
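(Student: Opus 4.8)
The plan is to deduce the corollary entirely from results already in hand: since $p^+<\infty$ and $q(\cdot)\le p(\cdot)$ we have $q^+\le p^+<\infty$, so Theorem \ref{Teor-2} and Corollary \ref{lem-3} both apply. I would first dispose of the degenerate case $\mu\{t:p(t)=q(t)\}>0$, in which all three statements fail together: $(1)$ because $ess\inf(p-q)=0$, $(2)$ because the inclusion is then automatically non-DSS, and $(3)$ because $(p-q)^*$ vanishes on a left-neighbourhood of $\mu(\Omega)=1$, so that $(1-x)^{(p-q)^*(x)}=1$ there. Hence one may assume $q(\cdot)<p(\cdot)$ $\mu$-a.e., which is exactly what Theorem \ref{Teor-2} and Corollary \ref{lem-3} require.

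I would then prove $(1)\Leftrightarrow(3)$. For $(1)\Rightarrow(3)$, writing $\delta:=ess\inf(p-q)>0$ one has $(p-q)^*(x)\ge\delta$ for all $x$, and since $0<1-x<1$ this gives $(1-x)^{(p-q)^*(x)}\le(1-x)^{\delta}\to 0$. The reverse implication $(3)\Rightarrow(1)$ is precisely Proposition \ref{DSS-log-Holder-separados}, and this is where the whole substance of the corollary sits: it is the only place the log-H\"older regularity of $p$ and $q$ is used, so I would simply invoke it.

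For $(2)\Leftrightarrow(3)$, the direction $(2)\Rightarrow(3)$ is immediate from Corollary \ref{lem-3}. For $(3)\Rightarrow(2)$ I would pass to condition $(6)$ of Theorem \ref{Teor-2}, namely $\lim_{x\to 1^-}(1-x)^{(\frac{p-q}{p})^*(x)}=0$, which is equivalent to DSS. Comparing the two rearrangements using $1\le p^-\le p\le p^+<\infty$ pointwise gives $\frac1{p^+}(p-q)\le\frac{p-q}{p}\le(p-q)$, and by monotonicity and homogeneity of the rearrangement $\frac1{p^+}(p-q)^*\le(\frac{p-q}{p})^*\le(p-q)^*$. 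Since $0<1-x<1$ this yields the sandwich
$$ (1-x)^{(p-q)^*(x)}\;\le\;(1-x)^{(\frac{p-q}{p})^*(x)}\;\le\;\big[(1-x)^{(p-q)^*(x)}\big]^{1/p^+}, $$
from which $(3)$ forces condition $(6)$ and hence $(2)$.

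I expect no genuine obstacle in assembling these implications; the only care needed is the bookkeeping of the rearrangement inequalities above and the handling of the degenerate case, the real difficulty of the equivalence having been isolated into Proposition \ref{DSS-log-Holder-separados}.
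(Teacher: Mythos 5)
Your proposal is correct and follows essentially the same route as the paper: Proposition \ref{DSS-log-Holder-separados} (plus the trivial converse) gives $(1)\Leftrightarrow(3)$, and the link to DSS is made through Theorem \ref{Teor-2} and Corollary \ref{lem-3}. You merely spell out details the paper leaves implicit, namely the degenerate case $\mu\{p=q\}>0$ and the rearrangement sandwich $\frac{1}{p^+}(p-q)^*\le(\frac{p-q}{p})^*\le(p-q)^*$ needed to pass between condition $(3)$ and condition $(6)$ of Theorem \ref{Teor-2}; both of these checks are sound.
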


\begin{proof}
   The above proposition shows  that $(1)$ and $(3)$ are equivalent,  and the others follows from Theorem \ref{Teor-2}.
\end{proof}
   
\begin{rem}
    Notice that every other statement at Theorem \ref{Teor-2} is also equivalent.
\end{rem}


\begin{ex}
(i) Inclusions \,$L^{p_{\alpha}(\cdot)}[0,1]\hookrightarrow L^{p}[0,1]$\, are  not DSS ,  for \, $p_{\alpha}(x)=p+x^{\alpha}$, \, $\alpha>0$\, and \, $1\leq p < \infty$.
\end{ex}

In general \,$ess\inf (p-q)(\cdot)>0$\, is not an equivalence for an inclusion  be DSS but just a sufficient condition (even in the case of  be $p(\cdot)$  continuous   and $q(\cdot)$  log-H\"older continuous):

(ii) Take a log-H\"older continuous exponent  $q(\cdot)$,  the continuous function
$$
r(x)=\frac{\ln\left(\left[\log_2(1-x)\right]^{2j}\right)}{-\log_2(1-x)},
$$
(for $j$  natural) and the exponent \,$p(\cdot)=q(\cdot)+r(1-2^{-e})\chi_{[0,1-2^{-e})}+r(\cdot)\chi_{[1-2^{-e}, 1]}
$. Then \,$ess\inf (p-q)(\cdot)=0$\,  but the inclusion \,$L^{p(\cdot)}[0,1] \hookrightarrow L^{q(\cdot)}[0,1]$\, is $M$-weakly compact for big enough $j\geq p^+$ and hence DSS (see \cite{H-R-S-21} p.9).

We do not know whether above criteria can be extended to bounded open subsets in \,$R^{n}$\, ($n\geq 2$).


\section{The infinite measure case}






In order to study DSS inclusions on an infinite measure, we can always assume that \,$\mu(\Omega_{d}) = \infty$\, (in Proposition \ref{inclusion}), avoiding  trivial cases of non-DSS inclusions.

If the inclusion \,$\lpv\hookrightarrow \lqv$\,  holds and $p^+<\infty$, then the sets\, $D_\varepsilon$\, (for every $\varepsilon>0$) has infinite measure, where
$$
D_\varepsilon:=\{t\in \Omega_d: p(t)<q(t)+\varepsilon\}.
$$


Indeed,  assume that there exists $\varepsilon>0$ such that $\mu(D_\varepsilon)<\infty$. Then, \, $\mu(\Omega_d\setminus D_\varepsilon)=\infty$\, and  if  \, $r(t): = \frac{p\,q}{p-q}(t)$, we have \,$r^+_{\vert \Omega_d\setminus D_\varepsilon}\leq\frac{p^+ q^+}{\varepsilon}<\infty$. \, Hence, for every $\lambda>1$,
$$
\int_{\Omega_d} \lambda^{-r(t)} d\mu \geq \int_{\Omega_d\setminus D_\varepsilon} \lambda^{-r(t)} d\mu \geq \lambda^{-\frac{p^+ q^+}{\varepsilon}} \mu(\Omega_d\setminus D_\varepsilon)=\infty
$$
which, using  Proposition \ref{inclusion}, gives a contradiction.



\begin{prop}\label{DSS-inf}
Let $(\Omega,\mu)$ be an atomless infinite measure space and exponents \,$p(\cdot)$ and $q(\cdot)$.  If the inclusion \,$\lpv \hookrightarrow \lqv$\, holds and \,$ p^{+} <\infty$, then the inclusion is non-DSS.
\end{prop}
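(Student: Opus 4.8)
The plan is to exhibit a single normalized pairwise disjoint sequence $(f_n)$ in $\lpv$ whose closed linear span the inclusion $i$ maps isomorphically onto its image; this directly contradicts disjoint strict singularity. Since the inclusion holds, $i$ is a bounded operator, so $\|g\|_{q(\cdot)}\le C\,\|g\|_{p(\cdot)}$ for all $g$ and some constant $C$; thus on any subspace only the reverse estimate $\|g\|_{p(\cdot)}\le C'\,\|g\|_{q(\cdot)}$ needs to be produced in order to obtain an isomorphism.

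For the construction I would use that $(\Omega,\mu)$ is atomless with $\mu(\Omega)=\infty$ to select pairwise disjoint measurable sets $(E_n)$ with $\mu(E_n)=1$, and take $f_n:=\chi_{E_n}$. Each $f_n$ is automatically normalized, since $\rho_{p(\cdot)}(\chi_{E_n})=\mu(E_n)=1$ gives $\|\chi_{E_n}\|_{p(\cdot)}=1$, and $(f_n)$ is a disjoint (hence unconditional basic) sequence. These are exactly the canonical disjoint vectors $\chi_{E_n}/\mu(E_n)^{1/p(t)}$ of the earlier statements, specialized to unit measure.

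The key estimate is an elementary modular comparison on the span. Take $g=\sum_n a_n\chi_{E_n}$ in the closed $\lpv$-span with $\|g\|_{q(\cdot)}\le 1$, so that $\rho_{q(\cdot)}(g)=\sum_n\int_{E_n}|a_n|^{q(t)}\,d\mu\le 1$. For each fixed $n$ the corresponding term is $\le 1$; if $|a_n|>1$ then, since $q(t)\ge 1$ and $\mu(E_n)=1$, one has $\int_{E_n}|a_n|^{q(t)}\,d\mu\ge |a_n|\,\mu(E_n)=|a_n|>1$, a contradiction. Hence $|a_n|\le 1$ for every $n$. Now $q(\cdot)\le p(\cdot)$ together with $|a_n|\le 1$ forces $|a_n|^{p(t)}\le |a_n|^{q(t)}$ pointwise, whence $\rho_{p(\cdot)}(g)=\sum_n\int_{E_n}|a_n|^{p(t)}\,d\mu\le \rho_{q(\cdot)}(g)\le 1$ and therefore $\|g\|_{p(\cdot)}\le 1$. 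By homogeneity $\|g\|_{p(\cdot)}\le\|g\|_{q(\cdot)}$ on the whole span, which combined with the automatic bound $\|g\|_{q(\cdot)}\le C\,\|g\|_{p(\cdot)}$ shows the two norms are equivalent on $[f_n]$. Thus $i|_{[f_n]}$ is an isomorphism and the inclusion is not DSS.

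I expect essentially no obstacle in this route; the only point to check carefully is the interchange of sum and integral for the infinite series $g=\sum a_n\chi_{E_n}$, which is immediate by Tonelli since the integrands are nonnegative with disjoint supports. I note that this argument uses only boundedness of the inclusion and the existence of infinitely many disjoint unit-measure sets, and not the hypothesis $p^+<\infty$. An alternative proof, closer to the preceding lemma, would instead invoke that $D_\varepsilon=\{t\in\Omega_d:p(t)<q(t)+\varepsilon\}$ has infinite measure for every $\varepsilon>0$: slicing by the (finite, since $p^+<\infty$) range of the pair $(p,q)$ one extracts disjoint finite-measure sets on which $p\approx P_n$ and $q\approx Q_n$ with $|P_n-Q_n|\to 0$ as fast as desired, so that $[f_n]$ is isomorphic to $\ell_{(P_n)}$ in $\lpv$ and to $\ell_{(Q_n)}$ in $\lqv$, and these Nakano sequence spaces coincide by Proposition \ref{lemaNa} (choosing $|P_n-Q_n|\le 1/n$ makes $\sum_n\alpha^{P_nQ_n/|P_n-Q_n|}<\infty$). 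In that second approach the genuine obstacle is making the ``nearly constant exponent'' identification of $[f_n]$ with a Nakano sequence space hold with constants uniform in $n$.
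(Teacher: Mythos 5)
Your main argument is correct, but it takes a genuinely different (and in fact stronger) route than the paper. The paper first shows, via Proposition \ref{inclusion} and the hypothesis $p^+<\infty$, that each set $D_{\frac{1}{2n}}=\{t\in\Omega_d:\ p(t)<q(t)+\tfrac{1}{2n}\}$ has infinite measure; it then slices the bounded range of $p$ into intervals of length $<\tfrac{1}{2n}$ to extract disjoint sets $A_n$ with $\mu(A_n)=1$ and $p^+_{\vert A_n}-q^-_{\vert A_n}\leq \tfrac1n$, and finally identifies $[\chi_{A_n}]_{p(\cdot)}$ with $[\chi_{A_n}]_{q(\cdot)}$ by sandwiching each span between the Nakano sequence spaces $\ell_{(p^{\pm}_{\vert A_n})}$ and $\ell_{(q^{\pm}_{\vert A_n})}$, which all coincide by Proposition \ref{lemaNa}. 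This is essentially your ``alternative proof,'' and the uniformity issue you flag there is exactly what the two-sided sandwich resolves. Your primary argument bypasses all of this: on the span of disjoint indicators of unit-measure sets, the bound $\rho_{q(\cdot)}(\sum_n a_n\chi_{E_n})\leq 1$ forces $|a_n|\leq 1$ for every $n$ (since $q\geq 1$ and $\mu(E_n)=1$), and then $q\leq p$ gives $\rho_{p(\cdot)}\leq\rho_{q(\cdot)}$, hence $\|\cdot\|_{p(\cdot)}\leq\|\cdot\|_{q(\cdot)}$ on the span, which together with the boundedness of $i$ yields the isomorphism. This is shorter, avoids Nakano's theorem and the slicing construction entirely, and---as you observe---does not use $p^+<\infty$, so it in fact proves Theorem \ref{DSS-Inf3} (and subsumes Proposition \ref{DSS-Inf2}) in one stroke. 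What the paper's route buys in exchange is an explicit identification of the common subspace with a concrete Nakano sequence space (e.g.\ $c_0$ in Proposition \ref{DSS-Inf2}). The only point you should make explicit is that atomlessness (together with the tacit semifiniteness the paper also uses when it chooses $A_n\subset E_n$ with $\mu(A_n)=1$) is what guarantees the existence of infinitely many pairwise disjoint sets of measure exactly $1$.
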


\begin{proof}
We shall proceed in a similar way as in \cite{H-R-S-22} Thm 3.4. It is enough to find a disjoint sequence $(f_n)$ generating the same infinite dimensional (closed) subspace in $\lpv$ as well as in $\lqv$. To do so, if we could take functions \, $f_n:=\chi_{A_n}$, where $(A_n)$ is a disjoint sequence with  $\mu(A_n)=1$ verifying that \, \,$p^+_{\vert_{A_n}}-q^-_{\vert_{A_n}}<\frac{1}{n}$, the proof would be finished.

Indeed, under these hypotheses we have the inclusions
$$
\ell_{p^-_{\vert_{A_n}}}\hookrightarrow [f_n]_{p(\cdot)} \hookrightarrow \ell_{p^+_{\vert_{A_n}}}
$$
and
$$
\ell_{q^-_{\vert_{A_n}}}\hookrightarrow [f_n]_{q(\cdot)} \hookrightarrow \ell_{q^+_{\vert_{A_n}}}.
$$
Using now Proposition \ref{lemaNa} we have that the Nakano sequence spaces \, $\ell_{p^-_{\vert_{A_n}}} \cong \ell_{p^+_{\vert_{A_n}}}\cong \ell_{q^-_{\vert_{A_n}}} \cong \ell_{q^+_{\vert_{A_n}}}$. Hence, we deduce that \,  $[f_n]_{p(\cdot)}\cong\, [f_n]_{q(\cdot)}$.

Let us now construct such disjoint  sequence $(A_n)$ with the above properties. We remarked above that for every natural $n$\, we have 
$\mu(D_{\frac{1}{2n}})=\infty$ \, (or either $p(\cdot)\vert_{A}\equiv q(\cdot)\vert_{A}$ over a positive measure subset $A\subset \Omega$ and then the proof is trivial). Thus, if we make a finite partition \,$\{[x_i, x_{i+1})\}$ of the interval \,$[1,p^+)$\, where \,$1=x_1<x_2<...<x_k= p^+$\, and \,$x_{i+1}-x_i<\frac{1}{2n}$\,  for every natural $i$, we can assure that, for some $j$,
$$
\mu\left(D_{\frac{1}{2n}}\cap p^{-1}\left([x_j,x_{j+1})\right)\right)=\infty.
$$
Even more, by  the definition of \,$D_{\frac{1}{2n}}$, it is also true that
$$
\mu\left(D_{\frac{1}{2n}}\cap p^{-1}\left([x_j,x_{j+1})\right) \cap q^{-1} ([x_{j}-\frac{1}{2n}, x_{j+1}) ) \right)=\infty.
$$
Thus, we conclude that there exists a set $E_{n}$ (the above set) with  infinite measure such that
\, $$p^+_{E_n}-q^{-}_{E_n}\leq x_{j+1}-(x_{j}-\frac{1}{2n})\, \leq \, \frac{1}{n} .$$
 Now, since $\mu(E_n)= \infty$\,  for each natural $n$, we can take \,$A_n\subset \left(E_n\cap (\bigcup_{i=1}^{n-1} A_i)^c\right)$\,  with $\mu(A_n)=1$ getting so the needed disjoint sequence $(A_n)$.
\end{proof}

\begin{prop}\label{DSS-Inf2} Let $(\Omega, \mu)$ be an atomless infinite  measure space and exponents \,$p(\cdot)$ and \,$q(\cdot)$. If the inclusion \,$\lpv \hookrightarrow \lqv$\, holds and the distribution function of \,$q(\cdot)$\, verifies \,$\mu_{q(\cdot)}(n) = \infty$ \,for every natural $n$, then the inclusion is not DSS.
\end{prop}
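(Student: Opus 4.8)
The plan is to argue exactly as in Proposition \ref{DSS-inf} (and \cite{H-R-S-22} Thm 3.4): it suffices to produce a disjoint sequence $(\chi_{A_n})$ with $\mu(A_n)=1$ which spans isomorphic copies of one and the same Nakano sequence space in $\lpv$ and in $\lqv$, so that the inclusion restricted to $[\chi_{A_n}]_{p(\cdot)}$ is an isomorphism. The essential difference with Proposition \ref{DSS-inf} is that here I do \emph{not} try to make the oscillation $p^+_{\vert A_n}-q^-_{\vert A_n}$ small (which is impossible when $p^+=\infty$); instead I exploit that the hypothesis $\mu_{q(\cdot)}(n)=\infty$ forces $q$, and hence $p$, to be arbitrarily large on sets of infinite measure, together with the fact that Nakano sequence spaces whose exponents tend to $\infty$ all coincide.

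First I would construct the sets. Since $p(\cdot)$ is finite-valued, $\{t:p(t)\le P\}\nearrow\Omega$ (up to a null set) as $P\to\infty$, so for each $n$ the set $\{t:q(t)>n\}\cap\{t:p(t)\le P\}$ increases to $\{t:q(t)>n\}$, which has infinite measure by hypothesis. Hence, proceeding inductively and having used only finite measure in $A_1,\dots,A_{n-1}$, I can pick $P_n<\infty$ large enough and a measurable set
$$
A_n\subseteq\big(\{t:q(t)>n\}\cap\{t:p(t)\le P_n\}\big)\setminus\textstyle\bigcup_{i<n}A_i,\qquad \mu(A_n)=1,
$$
the last choice being possible because the space is atomless. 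On $A_n$ one then has $n\le q^-_{\vert A_n}\le p^-_{\vert A_n}$ and $p^+_{\vert A_n}\le P_n<\infty$, so all four quantities $p^-_{\vert A_n},p^+_{\vert A_n},q^-_{\vert A_n},q^+_{\vert A_n}$ are finite and bounded below by $n$. The intersection with $\{t:p(t)\le P_n\}$ is exactly what is needed to keep the upper exponents finite despite $p^+=\infty$.

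As in Proposition \ref{DSS-inf}, using $\mu(A_n)=1$ and comparing the modular $\rho_{p(\cdot)}(\sum a_n\chi_{A_n})=\sum_n\int_{A_n}|a_n|^{p(t)}d\mu$ with the extreme exponents on each $A_n$, one obtains the sandwiching inclusions
$$
\ell_{p^-_{\vert A_n}}\hookrightarrow[\chi_{A_n}]_{p(\cdot)}\hookrightarrow\ell_{p^+_{\vert A_n}},\qquad
\ell_{q^-_{\vert A_n}}\hookrightarrow[\chi_{A_n}]_{q(\cdot)}\hookrightarrow\ell_{q^+_{\vert A_n}}.
$$
The key point is that for any two exponent sequences $(u_n),(v_n)$ with $\min(u_n,v_n)\ge n$ one has
$$
\frac{u_nv_n}{|u_n-v_n|}=\frac{1}{\,|1/u_n-1/v_n|\,}\ge\min(u_n,v_n)\ge n,
$$
so that $\sum_n\alpha^{u_nv_n/|u_n-v_n|}\le\sum_n\alpha^n<\infty$ for every $0<\alpha<1$. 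By Proposition \ref{lemaNa} this shows that the four Nakano spaces $\ell_{p^-_{\vert A_n}},\ell_{p^+_{\vert A_n}},\ell_{q^-_{\vert A_n}},\ell_{q^+_{\vert A_n}}$ all coincide with equivalent norms. Consequently both $[\chi_{A_n}]_{p(\cdot)}$ and $[\chi_{A_n}]_{q(\cdot)}$ are isomorphic, via the coordinate map $\sum a_n\chi_{A_n}\mapsto(a_n)$, to this single common Nakano space; hence the inclusion is an isomorphism on $[\chi_{A_n}]_{p(\cdot)}$ and $\lpv\hookrightarrow\lqv$ is not DSS.

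The routine verifications are the modular sandwiching (identical to Proposition \ref{DSS-inf}) and the measure-theoretic selection of the $A_n$. The main conceptual obstacle is precisely the observation that, unlike in the bounded case, no control on the upper exponents $p^+_{\vert A_n}$ is needed beyond finiteness: the estimate $\frac{u_nv_n}{|u_n-v_n|}\ge\min(u_n,v_n)$ guarantees that once the lower exponents blow up the whole family of Nakano spaces collapses to one, which is exactly what makes the construction succeed in the regime $p^+=\infty$.
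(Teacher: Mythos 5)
Your proof is correct and follows essentially the same route as the paper: both construct disjoint sets $A_n$ of measure one inside $\{q>n\}\cap\{p\le P_n\}$, sandwich $[\chi_{A_n}]_{p(\cdot)}$ and $[\chi_{A_n}]_{q(\cdot)}$ between Nakano sequence spaces, and invoke Proposition \ref{lemaNa} to identify them. The only cosmetic difference is that the paper interleaves the bounds into a single sequence $(n_k)$ and identifies the common space as $c_0$, whereas you apply Nakano's criterion directly via the estimate $\frac{uv}{|u-v|}\ge\min(u,v)\ge n$; both are valid.
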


\begin{proof}
Consider the sets
$$ D_n= \{\,t\in \Omega \,\,:\,\,q(t) > n\,\} \, \subset \, E_n = \{\,t\in \Omega \,\,:\,\,p(t) > n\,\}.
$$
 Since \,$\mu(D_n)= \infty$, the increasing  sequence \,$(D_n\backslash E_m)_{m}$\, verifies \,$\mu(D_n \backslash E_m) \nearrow \infty$  as $m\rw\infty$.

 Hence, by the hypotheses, it is possible to find a strictly increasing  sequence \,$(n_k)_k $ \, of natural numbers and a  disjoint  sequence of measurable sets \,$(A_k)_{k}$\,  with \,$\mu(A_k) = 1$ \, such that
$$
 n_k \leq q^-_{|A_k} \leq p^-_{|A_k} \qquad \text{and} \qquad  q^+_{|A_k} \leq p^+_{|A_k} \leq n_{k+1}.
$$
Consider now  the (closed) subspace \,$[\chi_{A_k}]_{p(\cdot)}$, formed by  all functions of the form \,$\sum_{k} \lambda_{k}\chi_{A_{k}} \in \lpv$\, such that $\rho_{p(\cdot)}(\lambda\sum_{k>N} \lambda_{k}\chi_{A_{k}})\xrightarrow{{N\rw\infty}} 0$ for every $\lambda > 0$.

If \,$\sum_k \lambda_k \chi_{A_k} \in \lpv$, then,  for some $r>0$,

$$
\sum_k |\frac{\lambda_k}{r}|^{n_{k+1}}\leq  \sum_k |\frac{\lambda_k}{r}|^{p^+_{|A_k}} \leq \rho_{p(\cdot)}\big(\frac{\sum_k \lambda_k\chi_{A_k}}{r}\big) \leq \sum_k |\frac{\lambda_k}{r}|^{p^-_{|A_k}}\leq \sum_k |\frac{\lambda_k}{r}|^{n_k}.
$$
Hence, since the Nakano sequence spaces \, $\ell_{(n_k)} \cong \ell_{\infty}  \cong \ell_{(n_{k+1})}$, we deduce that the subspace \,$ [\chi_{A_k}]_{p(\cdot)}  \cong c_0$.

Analogously   the corresponding subspace defined similarly  \,$[\chi_{A_k} ]_{q(\cdot)}$\,   in $\lqv$   satisfies   that  \, $ [\chi_{A_k}]_{q(\cdot)} \cong  c_0$. Therefore we conclude that \, $ [\chi_{A_k}]_{p(\cdot)} \cong [\chi_{A_k}]_{q(\cdot)} \cong c_0$.
\end{proof}

Combining now the two above propositions we cover the different cases for the  the following  statement (notice that the case \, $q^{+}<p^{+}= \infty $\, and $\mu_{p(\cdot)}(n) = \infty$ for every natural $n$ is not possible according with Proposition \ref{inclusion}).

\begin{thm} \label{DSS-Inf3}
Let $(\Omega,\mu)$ be a non-atomic infinite measure space and exponents \,$p(\cdot)$ and $q(\cdot)$. If the inclusion \,$\lpv \hookrightarrow \lqv$\, holds, then it is not DSS.
\end{thm}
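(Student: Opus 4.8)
The plan is to prove the statement by an exhaustive case analysis on the (un)boundedness of the exponents, reducing each case to one of the two preceding propositions. As explained at the beginning of this section, I may assume throughout that $\mu(\Omega_d)=\infty$; otherwise $p(\cdot)=q(\cdot)$ on a set of infinite measure, where the two norms coincide, and the inclusion is trivially non-DSS. I would split according to whether $p^+$ is finite and, when it is infinite, according to the distribution function of $q(\cdot)$.

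\textbf{The two direct cases.} If $p^+<\infty$, then $q(\cdot)\le p(\cdot)$ forces $q^+<\infty$ as well, and Proposition \ref{DSS-inf} gives at once that the inclusion is non-DSS. If instead $p^+=\infty$ but $\mu_{q(\cdot)}(n)=\infty$ for every natural $n$, then the hypotheses of Proposition \ref{DSS-Inf2} are met and again the inclusion is non-DSS. It remains to treat the case $p^+=\infty$ together with $\mu_{q(\cdot)}(n_0)<\infty$ for some natural $n_0$, which I expect to be the main obstacle since neither proposition applies verbatim.

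\textbf{The remaining case.} Here $q(\cdot)\le n_0$ outside the finite-measure set $\{q>n_0\}$. The key observation is that the inclusion criterion of Proposition \ref{inclusion} forces $p(\cdot)$ to be essentially bounded as well, which is precisely the content of the parenthetical remark preceding the theorem. Indeed, fix $N>2n_0$ and $\lambda>1$ as in Proposition \ref{inclusion}. On the set $\{p>N\}\cap\{q\le n_0\}$ one has $p>N>n_0\ge q$, so this set lies in $\Omega_d$, and there
$$\frac{p\,q}{p-q}=\frac{q}{1-q/p}<2q\le 2n_0,$$
whence $\lambda^{-\frac{p\,q}{p-q}}\ge \lambda^{-2n_0}>0$. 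Since $\int_{\Omega_d}\lambda^{-\frac{p\,q}{p-q}}\,d\mu<\infty$, the set $\{p>N\}\cap\{q\le n_0\}$ must have finite measure; combined with $\mu(\{q>n_0\})<\infty$ this yields $\mu_{p(\cdot)}(N)<\infty$.

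\textbf{Reduction.} Put $\tilde\Omega:=\{p\le N\}\cap\{q\le n_0\}$. By the previous step $\mu(\Omega\setminus\tilde\Omega)<\infty$, so $\mu(\tilde\Omega)=\infty$ and, since $\mu(\Omega_d)=\infty$, also $\mu(\Omega_d\cap\tilde\Omega)=\infty$. On $\tilde\Omega$ both exponents are bounded, and the restricted inclusion $L^{p(\cdot)}(\tilde\Omega)\hookrightarrow L^{q(\cdot)}(\tilde\Omega)$ still holds (restriction preserves the criterion of Proposition \ref{inclusion}, with the same $\lambda$) while $p^+_{\vert\tilde\Omega}\le N<\infty$. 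Hence Proposition \ref{DSS-inf} supplies a normalized disjoint sequence in $L^{p(\cdot)}(\tilde\Omega)$ spanning a subspace on which the inclusion is an isomorphism. Extending these functions by zero gives a disjoint sequence supported in $\tilde\Omega$ whose $L^{p(\cdot)}(\Omega)$- and $L^{q(\cdot)}(\Omega)$-norms agree with the corresponding restricted norms (as $L^{p(\cdot)}(\tilde\Omega)$ and $L^{q(\cdot)}(\tilde\Omega)$ are bands); thus the inclusion on the whole space is not DSS, completing the proof. The only delicate point is the estimate above, showing that an unbounded $p(\cdot)$ on a set of infinite measure is incompatible with the inclusion when $q(\cdot)$ is bounded off a finite-measure set.
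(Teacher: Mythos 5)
Your proof is correct and follows essentially the same route as the paper: a case analysis that reduces everything to Propositions \ref{DSS-inf} and \ref{DSS-Inf2}, using restriction to a band of infinite measure where $p(\cdot)$ is bounded when neither proposition applies verbatim. Your third case is exactly the detailed justification of the parenthetical remark the paper makes before the theorem (that $p(\cdot)$ unbounded on every infinite-measure subset is incompatible, via Proposition \ref{inclusion}, with $q(\cdot)$ being bounded off a set of finite measure), and you carry that estimate out correctly.
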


\begin{proof}
In the case of the existence of a subset \,$\Omega_0 \subset \Omega$ \, with \,  $\mu(\Omega_0) = \infty$\, and \, $p^+_{|\Omega_0} < \infty$, the  above Proposition \ref{DSS-inf} gives the result.  For the other possible cases we can use now Proposition \ref{DSS-Inf2}.
\end{proof}


\begin{rem}
    The  non-DSS property of inclusions between variable Lebesgue spaces on infinite measures  is not true in the class of Orlicz spaces. For example  the inclusions \, $L^{\varphi}(0,\infty) \hookrightarrow L^{\psi}(0,\infty)$\, are DSS  for the Orlicz functions \,$\varphi(x)=x^r\vee x^s$\, and \,$\psi(x)=x^p \vee x^q$, ($ r <p \leq q< s $) (cf. \cite{G-H-R} Ex. 4.9).
\end{rem}

\begin{rem} Note that also for infinite measures if the inclusion  \, $L^{\infty}(\mu) \hookrightarrow \lpv$  \, holds then it  is  non-DSS.

Indeed, assume that for an exponent function $p(\cdot)$  the inclusion \,  $L^{\infty}(\mu) \hookrightarrow \lpv$ \, holds. Then, \,  $\chi_{\Omega} \in \lpv $,  so  there exists \, $\lambda>1$\, such that \,$\int_{\Omega} \frac{d\mu}{\lambda^{p(t)}} \, < \infty$. It follows that  the sets \,$D_{n}=\{t\in \Omega: p(t)> n\}$\, has infinite measure for every natural $n$ (since \, $\mu(D_{n}^{c}) < \infty)$. Now reasoning as in above Proposition {\ref{DSS-Inf2}}, we can easily deduce  that  the inclusion  \,$L^{\infty}(\mu) \hookrightarrow \lpv$\, is non-DSS.

\end{rem}


\bibliographystyle{amsplain}

\providecommand{\bysame}{\leavevmode\hbox
to3em{\hrulefill}\thinspace}
\providecommand{\MR}{\relax\ifhmode\unskip\space\fi MR }
\providecommand{\MRhref}[2]{%
  \href{http://www.ams.org/mathscinet-getitem?mr=#1}{#2}
} \providecommand{\href}[2]{#2}

\end{document}